\numberwithin{equation}{section}
\newtheorem{Proposition}[equation]{Proposition}
\newtheorem{Lemma}[equation]{Lemma}
\newtheorem{Theorem}[equation]{Theorem}
\newtheorem{Corollary}[equation]{Corollary}
\theoremstyle{definition}  
\newtheorem{Definition}[equation]{Definition}
\newtheorem{Remark}[equation]{Remark}
\newtheorem{Example}[equation]{Example}
\newcommand\Comment[2][\relax]{\space\par\medskip\noindent%
   \fbox{\begin{minipage}{\textwidth}\textbf{Comment\ifx\relax#1\else---#1\fi}\newline%
        #2\end{minipage}}\medskip
}
\newcommand{\hackcenter}[1]{
 \xy (0,0)*{#1}; \endxy}
\def\b1{\text{\boldmath$1$}}
\def\phi{{\varphi}}
\newcommand{\ZZ}{{\mathbb Z}}
\def\b{\mathfrak{b}}
\def\k{\Bbbk}
\theoremstyle{remark}
  \gdef\set#1{\mathinner{\lbrace\,{\mathcode`\|"8000%
  \let|\midvert #1}\,\rbrace}}
\def\midvert{\egroup\mid\bgroup}
\colorlet{darkgreen}{green!50!black}
\tikzset{dots/.style={very thick,loosely dotted},
         greendot/.style={fill,circle,color=darkgreen,inner sep=1.5pt,outer sep=0}
}
\def\greendot(#1,#2){\node[greendot] at(#1,#2){}}
\newenvironment{braid}{
  \begin{tikzpicture}[baseline=6mm,blue,line width=1pt, scale=0.4,
                      draw/.append style={rounded corners},
                      every node/.append style={font=\fontsize{5}{5}\selectfont}]%
  }{\end{tikzpicture}
}
\def\Grid(#1,#2){
  \draw[very thin,gray,step=2mm] (0,0)grid(#1,#2);
  \draw[very thin,darkgreen,step=10mm] (0,0)grid(#1,#2);
}
\newcommand\Tableau[2][\relax]{
  \begin{tikzpicture}[scale=0.5,draw/.append style={thick,black}]
    \ifx\relax#1\relax%
    \else 
      \foreach\box in {#1} { \filldraw[blue!30]\box+(-.5,-.5)rectangle++(.5,.5); }
    \fi
    \newcount\row\newcount\col
    \row=0
    \foreach \Row in {#2} {
       \col=1
       \foreach\k in \Row {
          \draw(\the\col,\the\row)+(-.5,-.5)rectangle++(.5,.5);
          \draw(\the\col,\the\row)node{\k};
          \global\advance\col by 1
       }
       \global\advance\row by -1
    }
  \end{tikzpicture}
}
\newcommand\YoungDiagram[2][\relax]{
  \begin{tikzpicture}[scale=0.5,draw/.append style={thick,black}]
    \ifx\relax#1\relax%
    \else 
    \foreach\box in {#1} {
      \filldraw[blue!30]\box rectangle ++(1,1);
    }
    \fi
    \newcount\row
    \row=0
    \foreach \col in {#2} {
       \draw(1,\the\row)grid ++(\col,1);
       \global\advance\row by -1
    }
  \end{tikzpicture}
}
\begin{document}


\title[Injectively \MakeLowercase{\(k\)}-colored rooted forests]{{\bf Injectively \MakeLowercase{\(k\)}-colored rooted forests}}

\author{\sc Thomas Einolf}
\address{Washington \& Jefferson College\\ Washington\\ PA~15301, USA}
\email{einolft@washjeff.edu}

\author{\sc Robert Muth}
\address{Department of Mathematics\\ Washington \& Jefferson College\\ Washington\\ PA~15301, USA}
\email{rmuth@washjeff.edu}

\author{\sc Jeffrey Wilkinson}
\address{Washington \& Jefferson College\\ Washington\\ PA~15301, USA}
\email{wilkinsonjw@washjeff.edu}



\begin{abstract}
We enumerate injectively \(k\)-colored rooted forests with a given number of vertices of each color and a given sequence of root colors. We obtain from this result some new multi-parameter distributions of Fuss-Catalan numbers. As an additional application we enumerate triangulations of regular convex polygons according to their proper 3-coloring type. 
\end{abstract}

\maketitle

\section{Introduction}
A \(k\)-coloring \(\ell: V \to \{ 1, 2, \ldots,k\}\) of a directed graph \(D = (V,A)\) is {\em \(\overline{N^+}\)-injective} (henceforth, just {\em injective}) if \(\ell\) is injective on all closed out-neighborhoods in \(D\). 
Study of such colorings was initiated by Courcelle in \cite{Courcelle} (who termed them {\em semi-strong} in the \(\overline{N^-}\)-injective setting), and this work was expanded upon by Raspaud and Sopena in \cite{Rasp}. Injective graph colorings have been studied from multiple perspectives, such as chromatic numbers, locally injective homomorphisms, and related solvability and complexity problems, in both the oriented and unoriented settings; see for instance \cite{Mac, Hahn, Yuehua, Sudev, Mac2}. In this paper we take an enumerative focus, studying injective \(k\)-colorings of {\em rooted forests}, which we consider as directed graphs with arrows oriented out from an ordered sequence of roots. Refined enumerations of rooted trees and forests via various statistics have appeared in \cite{Deutsch, Takacs, Riordan}, to name a few related studies.

Given a composition \(\lambda = (\lambda_1, \ldots, \lambda_k) \vDash n\), we say a \(k\)-colored directed graph has {\em character} \(\lambda\) provided that it has \(\lambda_i\) vertices of color \(i\) for \(i \in \{1, \ldots, k\}\). Given moreover a sequence \(\underline{c} = (c_1, \ldots, c_m) \in \{1, \ldots, k\}^m\), we say that an injectively \(k\)-colored rooted forest \(F\) is a {\em \((\lambda, \underline{c})\)-forest} if \(F\) has character \(\lambda\) and roots colored by the sequence \(\underline c\). See Figures~\ref{fig:4coltree}, \ref{fig:20for} for examples.
Our main result is the following, which appears as Theorem~\ref{MainForThm} and Corollary~\ref{TreeCor} in the text:

\begin{Theorem}\label{BigThm}
Let \(k,m,n \in \mathbb{N}\), \(\lambda = (\lambda_1, \ldots, \lambda_k) \vDash n\),  and \(\underline{c} =(c_1, \ldots, c_m) \in \{1, \ldots, k\}^m\). 
Setting \(i_{\underline c} = \#\{j \mid c_j = i\}\) for \(i \in \{1, \ldots, k\}\), the number \(f_{\lambda, \underline{c}}\) of \((\lambda, \underline{c})\)-forests (up to isomorphism, see \S\ref{lacdef}) is given by \(f_{\lambda, \underline c} = \delta_{m,n}\delta_{m,i_{\underline c}}\) if \(\lambda_i = n\) for some \(i \in \{1,\ldots, k\}\). Otherwise:
\begin{align*}
f_{\lambda,\underline c} = P_k(\lambda_1, \ldots, \lambda_k, 1_{\underline c}, \ldots, k_{\underline c})
\prod_{i=1}^k \frac{1}{n - \lambda_i}{n- \lambda_i \choose \lambda_i - i_{\underline c}},
\end{align*}
where \(P_k\) is a homogeneous \(k\)-degree polynomial in \(2k\) variables, defined by:
\begin{align}\label{ThePoly1}
P_k(x_1, \ldots, x_k, y_1, \ldots, y_k)
=
\hspace{-2mm}
\sum_{S \subseteq [1,k]}
\hspace{-1mm}
\left[
(-1)^{|S|}
\hspace{-1mm}
\left(
\hspace{0.5mm}
\prod_{i \in S}y_i
\right)
\hspace{-1mm}
\left( 
\sum_{i = 1}^k (\delta_{i \in S} - |S|)x_i 
\right)
\hspace{-1mm}
\left(
\sum_{i=1}^k
x_i
\right)^{\hspace{-1mm}k-|S|-1}
\right]\hspace{-1mm}.
\end{align}
In particular, the number of injectively \(k\)-colored {\em trees} with \(n\) vertices, character \(\lambda\), and root colored by \(c \in \{1, \ldots, k\}\) is given by \(t_{\lambda, c} = 0\) if \(\lambda_c = 0\), and otherwise:
\begin{align}\label{treeform}
t_{\lambda,c} & = 
n^{k-2}{ n- \lambda_c \choose \lambda_c -1}
\prod_{i  \neq c} \frac{1}{n-\lambda_i} {n- \lambda_i \choose \lambda_i}.
\end{align}
\end{Theorem}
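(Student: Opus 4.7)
Proof Proposal.

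The plan is to encode $f_{\lambda, \underline c}$ as a coefficient in a multivariate generating function, apply the multivariate Lagrange--Good inversion formula, and identify the resulting expression with $P_k$. The degenerate case $\lambda_i = n$ is handled directly: all vertices have color $i$, so injectivity forces every tree in the forest to be a single vertex, yielding the stated $\delta$-function; henceforth assume $\lambda_i < n$ for all $i$.

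I would begin by setting up the functional equations. For each color $c$, let $T_c(x_1, \ldots, x_k)$ count (by character) isomorphism classes of injectively $k$-colored rooted trees with root color $c$, weighting each vertex of color $i$ by $x_i$. Injectivity forces the children of any vertex to have pairwise distinct colors, all different from the parent's; decomposition at the root thus gives $T_c = x_c \prod_{j \neq c}(1 + T_j)$ for each $c$. Since a $(\lambda, \underline c)$-forest is an ordered sequence of rooted trees with root colors $c_1, \ldots, c_m$, we have $f_{\lambda, \underline c} = [x^\lambda] \prod_{i=1}^k T_i(x)^{i_{\underline c}}$.

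Next I would apply the Lagrange--Good formula to the system $T_c = x_c \Phi_c(T)$ with $\Phi_c(t) = \prod_{j \neq c}(1+t_j)$. A routine Jacobian computation, using the matrix determinant lemma, gives
\[\det\bigl(\delta_{ij} - t_j \partial_j \log \Phi_i\bigr) = \prod_{i=1}^k \tfrac{1+2t_i}{1+t_i}\left(1 - \sum_{i=1}^k \tfrac{t_i}{1+2t_i}\right),\]
and combining with $\prod_i \Phi_i^{\lambda_i} = \prod_j(1+t_j)^{n-\lambda_j}$ yields
\[f_{\lambda, \underline c} = [t^{\lambda - \underline c^*}] \prod_{j=1}^k (1+t_j)^{n-\lambda_j-1}(1+2t_j)\Bigl(1 - \sum_{i=1}^k \tfrac{t_i}{1+2t_i}\Bigr),\]
where $\underline c^* := (1_{\underline c}, \ldots, k_{\underline c})$.

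Finally I would extract the coefficient. The identity $\prod_i(1+2t_i) - \sum_i t_i \prod_{j \neq i}(1+2t_j) = \sum_{S \subseteq [1,k]} 2^{|S|-1}(2-|S|) \prod_{i \in S} t_i$, together with the standard identities $\binom{n-\lambda_i-1}{a} = \tfrac{n-\lambda_i-a}{n-\lambda_i}\binom{n-\lambda_i}{a}$ and $\binom{n-\lambda_i-1}{a-1} = \tfrac{a}{n-\lambda_i}\binom{n-\lambda_i}{a}$, allows one to factor out $\prod_i \tfrac{1}{n-\lambda_i}\binom{n-\lambda_i}{\lambda_i - i_{\underline c}}$ and leaves a subset sum $Q$ in the variables $(\lambda_i, i_{\underline c})$. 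The key observation $2(\lambda_i - i_{\underline c}) + (n - 2\lambda_i + i_{\underline c}) = n - i_{\underline c}$ collapses $Q$ to
\[Q = \prod_{i=1}^k (n - i_{\underline c}) - \sum_{i=1}^k (\lambda_i - i_{\underline c}) \prod_{j \neq i}(n - j_{\underline c}),\]
after which the one-line polynomial identity $y_j(n - x_j) + n(x_j - y_j) = x_j(n - y_j)$ (summed over $j$, using $\sum_j x_j = n$) identifies $Q$ with $P_k(\lambda, \underline c^*)$. The tree formula (\ref{treeform}) follows from the special case $m = 1$, $\underline c = (c)$, where the collapsed form of $Q$ evaluates to $n^{k-2}(n - \lambda_c)$, the extra factor cancelling against $\frac{1}{n-\lambda_c}$ in the product. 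The principal obstacle is this last identification of the natural subset-sum output of Lagrange--Good inversion with the alternating closed form (\ref{ThePoly1}); without the telescoping $2z_i + w_i = n - y_i$ the equivalence is opaque, and direct coefficient-by-coefficient matching in the basis $\{\prod_{i \in S} y_i \cdot n^{k-|S|-1}\}$ is unwieldy.
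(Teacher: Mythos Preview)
Your proposal is correct and takes a genuinely different route from the paper. The paper proves the forest formula by induction on $|\lambda|$: it establishes a recurrence $f_{\lambda,\underline c}=\sum_{S\subseteq[1,k-1]}f_{\lambda-\varepsilon_k,\underline c(S)}$ by deleting the last root, checks the base case $|\lambda|=2$ by hand, and then verifies that the closed form satisfies this recurrence; the inductive step reduces to a polynomial identity (their Lemma~2.13(iv)) whose verification occupies a lengthy appendix. You instead extract $f_{\lambda,\underline c}$ directly as a coefficient via multivariate Lagrange--Good inversion applied to the system $T_c=x_c\prod_{j\neq c}(1+T_j)$, compute the Jacobian by the matrix determinant lemma, and then simplify. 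Your collapsed expression $Q=\prod_i(n-i_{\underline c})-\sum_i(\lambda_i-i_{\underline c})\prod_{j\neq i}(n-j_{\underline c})$ is correct, and the identification $Q=P_k$ is in fact a short computation: expanding both sides in the monomial basis $\{\prod_{i\in S}y_i\}$ and using $\sum_{i\notin S}x_i=n-\sum_{i\in S}x_i$ gives it in two lines, so your worry that this last step is the ``principal obstacle'' is somewhat overstated. What your approach buys is a conceptual derivation in which $P_k$ appears as the natural output of coefficient extraction rather than as a guessed formula to be verified---precisely what the authors themselves ask for in their \S6.1. What the paper's approach buys is that it avoids the (admittedly standard) machinery of Lagrange--Good inversion, staying entirely within elementary recurrences and binomial manipulations, at the cost of the appendix computation.
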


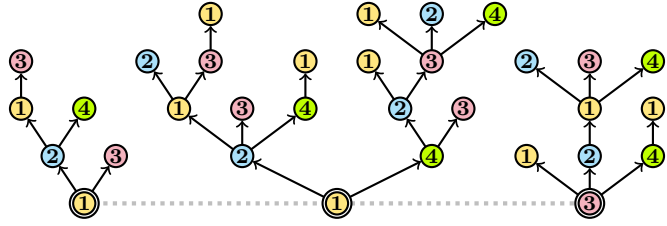
\begin{figure}[h]
\begin{align*}
\hackcenter{
\begin{tikzpicture}[scale=0.42]
\draw[ultra thick, dotted, lightgray] (-8,-1)--(8,-1);
 \draw[thick,->, shorten >=0.16cm]  (0,-1)--(3,0.5);
  \draw[thick,->, shorten >=0.16cm]  (0,-1)--(-3,0.5);
           \draw[thick,->, shorten >=0.16cm]  (-3,0.5)--(-5,2);
            \draw[thick,->, shorten >=0.16cm]  (-3,0.5)--(-3,2);
             \draw[thick,->, shorten >=0.16cm]  (-3,0.5)--(-1,2);
                 \draw[thick,->, shorten >=0.16cm]  (3,0.5)--(2,2);
                    \draw[thick,->, shorten >=0.16cm]  (3,0.5)--(4,2);
                  \draw[thick,->, shorten >=0.16cm]  (2,2)--(1,3.5);
                     \draw[thick,->, shorten >=0.16cm]  (2,2)--(3,3.5);
                       \draw[thick,->, shorten >=0.16cm]  (3,3.5)--(5,5);
                       \draw[thick,->, shorten >=0.16cm]  (3,3.5)--(3,5);
                       \draw[thick,->, shorten >=0.16cm]  (3,3.5)--(1,5);
                       \draw[thick,->, shorten >=0.16cm]  (-4,3.5)--(-4,5);
                     \draw[thick,->, shorten >=0.16cm]  (-1,2)--(-1,3.5);
                     \draw[thick,->, shorten >=0.16cm]  (-5,2)--(-6,3.5);
                       \draw[thick,->, shorten >=0.16cm]  (-5,2)--(-4,3.5);
                   \draw[ thick, black, fill = cyan!30] (-6,3.5) circle (10pt);
                        \node[] at (-6,3.5){ $\scriptstyle{\mathbf{2}}$};
                    \draw[ thick, black, fill = violet!30!red!30] (-4,3.5) circle (10pt);
                       \node[] at (-4,3.5){ $\scriptstyle{\mathbf{3}}$};
                         \draw[ thick, black, fill = white] (0,-1) circle (13pt);
                          \draw[ thick, black, fill = pink!50!yellow!80] (0,-1) circle (10pt);
                                    \node[] at (0,-1){ $\scriptstyle{\mathbf{1}}$};
           \draw[ thick, black, fill = lime] (3,0.5) circle (10pt);
                     \node[] at (3,0.5){ $\scriptstyle{\mathbf{4}}$};
               \draw[ thick, black, fill = cyan!30] (-3,0.5) circle (10pt);
                         \node[] at (-3,0.5){ $\scriptstyle{\mathbf{2}}$};
                                 \draw[ thick, black, fill = pink!50!yellow!80] (-5,2) circle (10pt);
                                    \node[] at (-5,2){ $\scriptstyle{\mathbf{1}}$};
           \draw[ thick, black, fill = violet!30!red!30] (-3,2) circle (10pt);
              \node[] at (-3,2){ $\scriptstyle{\mathbf{3}}$};
               \draw[ thick, black, fill = lime] (-1,2) circle (10pt);
                  \node[] at (-1,2){ $\scriptstyle{\mathbf{4}}$};
                             \draw[ thick, black, fill = cyan!30] (2,2) circle (10pt);
                                  \node[] at (2,2){ $\scriptstyle{\mathbf{2}}$};
                                         \draw[ thick, black, fill = violet!30!red!30] (4,2) circle (10pt);
                                            \node[] at (4,2){ $\scriptstyle{\mathbf{3}}$};
 \draw[ thick, black, fill =violet!30!red!30] (3,3.5) circle (10pt);
    \node[] at (3,3.5){ $\scriptstyle{\mathbf{3}}$};
                    \draw[ thick, black, fill = pink!50!yellow!80] (-1,3.5) circle (10pt);
                       \node[] at (-1,3.5){ $\scriptstyle{\mathbf{1}}$};
                              \draw[ thick, black, fill = pink!50!yellow!80] (1,3.5) circle (10pt);
                                 \node[] at (1,3.5){ $\scriptstyle{\mathbf{1}}$};
                                  \draw[ thick, black, fill = lime] (5,5) circle (10pt);
                                 \node[] at (5,5){ $\scriptstyle{\mathbf{4}}$};
                                  \draw[ thick, black, fill = cyan!30] (3,5) circle (10pt);
                                 \node[] at (3,5){ $\scriptstyle{\mathbf{2}}$};
                                   \draw[ thick, black, fill = pink!50!yellow!80] (1,5) circle (10pt);
                                 \node[] at (1,5){ $\scriptstyle{\mathbf{1}}$};
                                   \draw[ thick, black, fill = pink!50!yellow!80] (-4,5) circle (10pt);
                                 \node[] at (-4,5){ $\scriptstyle{\mathbf{1}}$};
   \draw[thick,->, shorten >=0.16cm]  (0-8,-1)--(1-8,0.5);
      \draw[thick,->, shorten >=0.16cm]  (0-8,-1)--(-1-8,0.5);
         \draw[thick,->, shorten >=0.16cm]  (-1-8,0.5)--(-2-8,2);
            \draw[thick,->, shorten >=0.16cm]  (-1-8,0.5)--(0-8,2);
              \draw[thick,->, shorten >=0.16cm]  (-2-8,2)--(-2-8,3.5);
       \draw[ thick, black, fill = white] (0-8,-1) circle (13pt);
     \draw[ thick, black, fill = pink!50!yellow!80] (0-8,-1) circle (10pt);
         \node[] at (0-8,-1){ $\scriptstyle{\mathbf{1}}$};  
           \draw[ thick, black, fill = violet!30!red!30] (1-8,0.5) circle (10pt);
         \node[] at (1-8,0.5){ $\scriptstyle{\mathbf{3}}$};
            \draw[ thick, black, fill = cyan!30] (-1-8,0.5) circle (10pt);
         \node[] at (-1-8,0.5){ $\scriptstyle{\mathbf{2}}$};    
            \draw[ thick, black, fill = pink!50!yellow!80] (-2-8,2)circle (10pt);
         \node[] at (-2-8,2){ $\scriptstyle{\mathbf{1}}$};    
            \draw[ thick, black, fill = lime] (0-8,2) circle (10pt);
         \node[] at (0-8,2){ $\scriptstyle{\mathbf{4}}$};    
            \draw[ thick, black, fill =violet!30!red!30] (-2-8,3.5) circle (10pt);
         \node[] at (-2-8,3.5){ $\scriptstyle{\mathbf{3}}$};      
     \draw[thick,->, shorten >=0.16cm]  (0+8,-1)--(-2+8,0.5);
      \draw[thick,->, shorten >=0.16cm]  (0+8,-1)--(-0+8,0.5);
            \draw[thick,->, shorten >=0.16cm]  (0+8,-1)--(2+8,0.5);
             \draw[thick,->, shorten >=0.16cm]  (0+8,0.5)--(-0+8,2);
             \draw[thick,->, shorten >=0.16cm]  (-0+8,2)--(-2+8,3.5);
                   \draw[thick,->, shorten >=0.16cm]  (-0+8,2)--(-0+8,3.5);
                         \draw[thick,->, shorten >=0.16cm]  (-0+8,2)--(2+8,3.5);
                           \draw[thick,->, shorten >=0.16cm]  (2+8,0.5)--(2+8,2);
           \draw[ thick, black, fill = white] (0+8,-1) circle (13pt);
          \draw[ thick, black, fill = violet!30!red!30] (0+8,-1) circle (10pt);
         \node[] at (0+8,-1){ $\scriptstyle{\mathbf{3}}$}; 
             \draw[ thick, black, fill = pink!50!yellow!80] (-2+8,0.5) circle (10pt);
         \node[] at (-2+8,0.5){ $\scriptstyle{\mathbf{1}}$}; 
            \draw[ thick, black, fill = lime] (2+8,0.5) circle (10pt);
         \node[] at (2+8,0.5){ $\scriptstyle{\mathbf{4}}$}; 
            \draw[ thick, black, fill = cyan!30] (0+8,0.5) circle (10pt);
         \node[] at (0+8,0.5){ $\scriptstyle{\mathbf{2}}$}; 
            \draw[ thick, black, fill = pink!50!yellow!80] (2+8,2) circle (10pt);
         \node[] at (2+8,2){ $\scriptstyle{\mathbf{1}}$}; 
           \draw[ thick, black, fill = violet!30!red!30] (0+8,3.5) circle (10pt);
         \node[] at (0+8,3.5){ $\scriptstyle{\mathbf{3}}$}; 
          \draw[ thick, black, fill = pink!50!yellow!80] (0+8,2) circle (10pt);
         \node[] at (0+8,2){ $\scriptstyle{\mathbf{1}}$}; 
               \draw[ thick, black, fill = lime] (2+8,3.5) circle (10pt);
         \node[] at (2+8,3.5){ $\scriptstyle{\mathbf{4}}$}; 
               \draw[ thick, black, fill = cyan!30] (-2+8,3.5) circle (10pt);
         \node[] at (-2+8,3.5){ $\scriptstyle{\mathbf{2}}$}; 
\end{tikzpicture}
}
\end{align*}
\caption{An \(((11,7,8,6),(1,1,3))\)-forest. There are, up to directed graph isomorphisms which fix  roots and preserve vertex colors, \(f_{(11,7,8,6),(1,1,3)} = 2223687758798502796800\) such forests.}
\label{fig:4coltree}       
\end{figure}

As we explain in \S\ref{BigCatSec}, the numbers \(f_{\lambda, \underline c}\) and \(t_{\lambda, c}\) are of some wider interest. The set of all injectively \(k\)-colored rooted forests with \(n\) vertices and a fixed root color sequence of length \(m\) are counted by the Fuss-Catalan number \(A_{n-m}(k-1,km-m)\). We use this fact to obtain some families of \(p\)-parameter distributions of the Fuss-Catalan numbers  \(A_n(p,1)\) and \(A_n(p,p\ell)\) in \S\ref{Ap1distSec},\ref{FCdistSec}. These distributions may be compared with (but are a different flavor than) the \((p-1)\)-parameter distributions of \(A_n(p,1)\) studied in \cite{Aval}---compare Remark 3.2, loc. cit. with our Proposition~\ref{Ap1dist}.
We describe in Remark~\ref{RemConnex} some additional connections of \(t_{\lambda,c}\) with other known sequences and combinatorial objects.

Finally, in \S\ref{TriSec} we give a correspondence between injectively 3-colored rooted trees and triangulations of regular convex polygons, and use this to enumerate triangulations by their proper 3-coloring type in Theorem~\ref{TriThm}.

\subsection{ArXiv Version}\label{ArxivVersion}  In order to keep the paper from becoming overlong we chose to relegate some of the more lengthy but straightforward calculations to the arXiv version of the paper.  Readers interested in seeing these additional details can download the \LaTeX~source file from the arXiv and find a toggle near the beginning of the file which allows one to compile the paper with these calculations included.

\subsection{Acknowledgements}
The authors wish to thank Sam Zbarsky for suggesting the consideration of colored forests as a route to establishing that formula (\ref{treeform}) holds for trees in a more specialized setting. This suggestion led the authors to the general result featured in this paper.

\section{Preliminaries}

\subsection{Partitions and compositions}\label{CombSec}
We fix \(k \in \mathbb{N}\) throughout. For integers \(a \leq b\), we will write \([a,b]:=\{a,a+1, \ldots, b\}\). 
We set \(\Lambda_k := \mathbb{Z}^k_{\geq 0}\), and treat \(\Lambda_k\) as an abelian monoid under componentwise addition. We take the usual generators of \(\Lambda_k\):
\begin{align*}
\varepsilon_i = (0, \ldots,0, 1,0, \ldots, 0), \qquad (i \in [1,k]),
\end{align*}
where \(1\) is in the \(i\)th slot. We write \(\mathbf{0}:= (0, \ldots, 0)\) for the identity in \(\Lambda_k\). 
For \(\lambda = (\lambda_1, \ldots, \lambda_k) \in \Lambda_k\), we write \(|\lambda| = \lambda_1 + \cdots+ \lambda_k\). We also write
\begin{align*}
\Lambda_k(n) &:= \{ \lambda \in \Lambda_k \mid |\lambda| = n\};\qquad\;\;\;
\Lambda_k^+(n) := \{ \lambda \in \Lambda_k \mid |\lambda| = n, \,\lambda_1 \geq \cdots \geq \lambda_k\};\\
\Lambda_k(<n) &:= \{ \lambda \in \Lambda_k \mid |\lambda| < n\}; \qquad
\Lambda_k(\leq n) := \{ \lambda \in \Lambda_k \mid |\lambda| \leq n\},
\end{align*}
where \(\Lambda_k(n)\) and \(\Lambda_k^+(n)\) are the {\em \(k\)-part compositions} and {\em \(k\)-part partitions} of \(n\), respectively. For \(\lambda \in \Lambda^+_k(n)\), we write \(\mathcal{O}(\lambda)\) for the orbit of \(\lambda\) under the permutation action of the symmetric group \(\mathfrak{S}_k\) on \(\Lambda_k\), and  \(\textup{o}(\lambda) = |\mathcal{O}(\lambda)|\).

\subsection{Fuss-Catalan numbers}\label{FussCatSec}
For \(n,p \in \mathbb{Z}_{\geq 0}\), \(r \in \mathbb{N}\), the associated {\em Fuss-Catalan number} (or {\em Raney number}, or {\em Hagen/Rothe coefficient}) \(A_n(p,r)\) is defined by:
\begin{align*}
A_n(p,r) := \frac{r}{np+r}{np+r \choose n}.
\end{align*}
We write \(B_{p,r}(x) = \sum_{n = 0}^\infty A_n(p,r)x^n\) for the generating function of the sequence \((A_n(p,r))_{n=0}^\infty\).

For \(r=1\), the sequence \((A_n(p,1))_{n =0}^\infty\) may be alternatively defined via the recurrence
\begin{align}\label{CatRecur}
A_0(p,1) = 1, \qquad 
A_n(p,1) = \sum_{\lambda \in \Lambda_p(n-1)} A_{\lambda_1}(p,1) \cdots A_{\lambda_p}(p,1).
\end{align}
Thus \(B_{p,1}(x)\) is the power series satisfying the relation \(B_{p,1}(x) = xB_{p,1}(x)^p +1\). Moreover, we have \(B_{p,r}(x) = B_{p,1}(x)^r\) for all \(r \in \mathbb{N}\).

Fuss-Catalan numbers arise in a large number of combinatorial contexts, enumerating \(p\)-ary trees, Dyck paths, standard tableaux and core partitions, to name a few; see for instance \cite{Hilton, 
Aval, Zhou}.
The most famous of this family of sequences are the {\em Catalan numbers} \(C_n = A_n(2,1)\). Though named for Eug\`ene Catalan, who investigated these numbers in the context of polygon triangulation (see \S\ref{TriSec}), it appears that the sequence was previously known and studied by Minggatu, see \cite{Luo, Larcombe2}.

\subsection{Basic definitions for directed graphs}

\begin{Definition}
A directed graph \(D = (V_D, A_D)\) is the data of a set of {\em vertices} \(V_D\) and a set of {\em arrows} \(A_D \subseteq V_D\times V_D\). We depict \((v,w) \in A_D\) with an arrow \(v \to w\). An  {\em oriented path} from \(v\) to \(w\) in \(D\) is a sequence of arrows
\begin{align*}
(u_0,u_1),(u_1,u_2),(u_2,u_3), \ldots, (u_{j-1},u_j)
\end{align*}
in \(A_D\), for some \(j \in \mathbb{N}\) such that \(u_0 = v\) and \(u_j = w\). We also consider the empty sequence of arrows to be an oriented path from \(v\) to \(v\) for any \(v \in V_D\).

An isomorphism \(\phi: D \xrightarrow{\sim} D'\) of directed graphs is the data of a bijection \(\phi: V_D \to V_{D'}\) of vertices that induces a bijection \(\phi \times \phi: A_D \to A_{D'}\) on arrows.

\end{Definition}

\begin{Definition}
Let \(m \in \mathbb{N}\). An {\em \(m\)-rooted forest} \(F\) is a directed graph \((V_F,A_F)\) together with an injective function \(\mathsf{r}_F:[1,m] \to V_F\) such that for all \(v \in V_F\) there exists a unique oriented path from an element of \(\mathsf{r}_F([1,m])\) to \(v\). We refer to \(\mathsf{r}_F\) as a {\em root order} and \(\mathsf{r}_F([1,m])\) as the {\em roots} of \(F\). If \(m=1\), we refer to \(F\) as a {\em rooted tree}.
\end{Definition}

\begin{Definition}
For a directed graph \(D = (V_D,A_D)\), the {\em out-neighborhood of \(v \in V_D\)} is the set \(N^+(v):=\{w \in V_D \mid (v,w) \in A_D\}\). The {\em in-neighborhood of \(v \in V_D\)} is the set \(N^-(v):=\{w \in V_D \mid (w,v) \in A_D\} \). The {\em closed} out-neighborhood and {\em closed} in-neighborhood we denote by \(\overline{N^+}(v) := N^+(v) \cup \{v\}\) and  \(\overline{N^-}(v) := N^-(v) \cup \{v\}\) respectively.
\end{Definition}

\subsection{Properties of \(k\)-colorings}

\begin{Definition}
For a set \(V\), a {\em \(k\)-coloring} \(\ell\) is a function \(\ell: V \to [1,k]\). For \(S \subseteq V\) and \(c \in \{1, \ldots, k\}\), we will write \(S_{\ell, c} := \ell^{-1}(c) \cap S\), or just \(S_{c}\) when there is no chance of confusion. 
The associated {\em character} \(\textup{char}(\ell)\) of the \(k\)-coloring \(\ell\) is the tuple 
\begin{align*}
\textup{char}(\ell)=
(\,|V_1|,\ldots, |V_k|\,) \in \Lambda_k.
\end{align*}
\end{Definition}

\begin{Definition}
For a graph \(G = (V_G, E_G)\), a \(k\)-coloring \(\ell: V_G \to \{1, \ldots, k\}\) is said to be {\em proper} provided that \(\ell(v) \neq \ell(w)\) whenever \(v,w \in V_G\) are adjacent in \(G\).
\end{Definition}

\begin{Definition}
For a directed graph \(D = (V_D, A_D)\), a \(k\)-coloring \(\ell: V_D \to \{1, \ldots, k\}\) is said to be \(\overline{N^+}\)-{\em injective} (or, henceforth, just {\em injective} when there is no chance of confusion) provided that the closed out-neighborhood of any vertex contains at most one vertex of any given color. I.e., \(|\overline{N^+}(v)_c| \leq 1\) for all \(v \in V_D\), \(c \in [1,k]\).\end{Definition}

\begin{Remark}
It follows from definitions that if \(\ell\) is an injective \(k\)-coloring for the directed graph \(D\), then \(\ell\) is a proper \(k\)-coloring for \(D\) as an undirected graph. 
\end{Remark}

\subsection{Defining \((\lambda, \underline{c})\)-forests}\label{lacdef}
We now introduce the main combinatorial objects of interest in this paper.

\begin{Definition}\label{TheDef}
Let \(k,m \in \mathbb{N}\),  \(\lambda \in \Lambda_k\), and \(\underline{c} = (c_1, \ldots, c_m) \in [1,k]^m\). 
A \((\lambda, \underline{c})\)-forest \(F\) is an \(m\)-rooted forest \((V_F, A_F)\) equipped with root order \(\mathsf{r}_F:[1,m] \to V_F\) and \(\overline{ N^+}\)-injective \(k\)-coloring \(\ell_F:V_F \to [1,k]\), such that:
\begin{align*}
\textup{char}(\ell_F) = \lambda;
\qquad
\textup{and}
\qquad
\ell_F(\mathsf{r}_F(i)) = c_i \; \textup{for all }i \in [1,m].
\end{align*}

Two \((\lambda, \underline{c})\)-forests \(F \), \(F'\) are {\em isomorphic} if there exists a directed graph isomorphism \(\phi: (V_F,A_F) \xrightarrow{\sim} (V_{F'},A_{F'})\) such that \(\phi(\mathsf{r}_F(i)) = \mathsf{r}_{F'}(i)\) for all \(i \in [1,m]\) and \(\ell_{F'}(\phi(v)) = \ell_{F}(v)\) for all \(v \in V_F\). 
We write \(\mathcal{F}_{\lambda,\underline c}\) for the set of (isomorphism classes of) \((\lambda, \underline{c})\)-forests. In the case where \(m=1\) and \(c \in [1,k]\), we sometimes write \(\mathcal{T}_{\lambda,c}:= \mathcal{F}_{\lambda, (c)}\), and refer to these objects as {\em \((\lambda, c)\)-trees}. We will also write:
\begin{align*}
\mathcal{F}_{n, \underline c} := \bigsqcup_{\lambda \in \Lambda_k(n)} \mathcal{F}_{\lambda, \underline c};
\qquad
\mathcal{T}_{n,  c} := \bigsqcup_{\lambda \in \Lambda_k(n)} \mathcal{T}_{\lambda,  c};
\end{align*}
and
\begin{align*}
f_{\lambda, \underline c} = \big|\mathcal{F}_{\lambda, \underline c}\big|;
\qquad
f_{n, \underline c} = \big|\mathcal{F}_{n, \underline c}\big|;
\qquad
t_{\lambda, c} = \big|\mathcal{T}_{\lambda, c}\big|;
\qquad
t_{n, c} = \big|\mathcal{T}_{n, c}\big|.
\end{align*}
\end{Definition}

\begin{Remark} We refer the reader to Figure~\ref{fig:4coltree} for an example of an \(((11,7,8,6),(1,1,3))\)-forest. Our convention is to order the roots from left to right along a horizontal dotted line.
In Figure~\ref{fig:20for} we display all twenty non-isomorphic \(((3,1,1),(1,1))\)-forests.
\end{Remark}

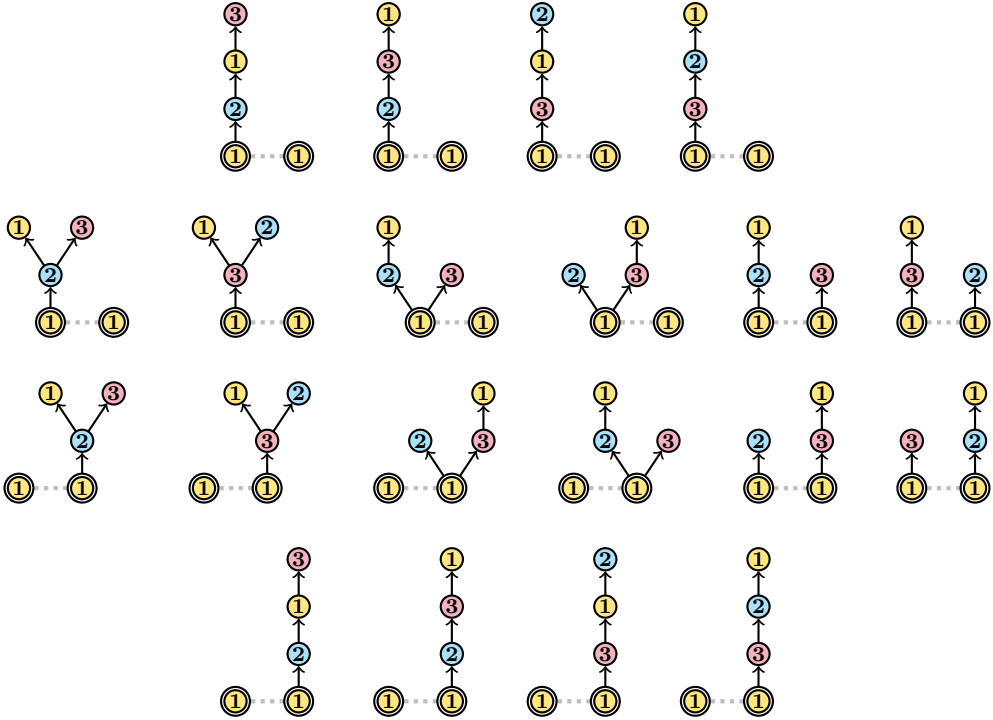
\begin{figure}[h]
\begin{align*}
\begin{tikzpicture}[scale=0.42]
\draw[ultra thick, dotted, lightgray] (-1.11,0)--(1,0);
 \draw[thick,->, shorten >=0.16cm]  (-1,0)--(-1,1.5);
  \draw[thick,->, shorten >=0.16cm]  (-1,1.5)--(-1,3);
    \draw[thick,->, shorten >=0.16cm]  (-1,3)--(-1,4.5);
         \draw[ thick, black, fill = cyan!30] (-1,1.5) circle (10pt);
         \node[] at (-1,1.5){ $\scriptstyle{\mathbf{2}}$};  
           \draw[ thick, black, fill = pink!50!yellow!80] (-1,3) circle (10pt);
         \node[] at (-1,3){ $\scriptstyle{\mathbf{1}}$};  
           \draw[ thick, black, fill = violet!30!red!30] (-1,4.5) circle (10pt);
         \node[] at (-1,4.5){ $\scriptstyle{\mathbf{3}}$};  
        \draw[ thick, black, fill = white] (-1,0) circle (13pt);
     \draw[ thick, black, fill = pink!50!yellow!80] (-1,0) circle (10pt);
         \node[] at (-1,0){ $\scriptstyle{\mathbf{1}}$};  
                 \draw[ thick, black, fill = white] (1,0) circle (13pt);
     \draw[ thick, black, fill = pink!50!yellow!80] (1,0) circle (10pt);
         \node[] at (1,0){ $\scriptstyle{\mathbf{1}}$};  
         %
\end{tikzpicture}
\qquad\begin{tikzpicture}[scale=0.42]
\draw[ultra thick, dotted, lightgray] (-1.11,0)--(1,0);
 \draw[thick,->, shorten >=0.16cm]  (-1,0)--(-1,1.5);
  \draw[thick,->, shorten >=0.16cm]  (-1,1.5)--(-1,3);
    \draw[thick,->, shorten >=0.16cm]  (-1,3)--(-1,4.5);
         \draw[ thick, black, fill = cyan!30] (-1,1.5) circle (10pt);
         \node[] at (-1,1.5){ $\scriptstyle{\mathbf{2}}$};  
           \draw[ thick, black, fill = violet!30!red!30] (-1,3) circle (10pt);
         \node[] at (-1,3){ $\scriptstyle{\mathbf{3}}$};  
           \draw[ thick, black, fill = pink!50!yellow!80] (-1,4.5) circle (10pt);
         \node[] at (-1,4.5){ $\scriptstyle{\mathbf{1}}$};  
        \draw[ thick, black, fill = white] (-1,0) circle (13pt);
     \draw[ thick, black, fill = pink!50!yellow!80] (-1,0) circle (10pt);
         \node[] at (-1,0){ $\scriptstyle{\mathbf{1}}$};  
                 \draw[ thick, black, fill = white] (1,0) circle (13pt);
     \draw[ thick, black, fill = pink!50!yellow!80] (1,0) circle (10pt);
         \node[] at (1,0){ $\scriptstyle{\mathbf{1}}$};  
         %
\end{tikzpicture}
\qquad\begin{tikzpicture}[scale=0.42]
\draw[ultra thick, dotted, lightgray] (-1.11,0)--(1,0);
 \draw[thick,->, shorten >=0.16cm]  (-1,0)--(-1,1.5);
  \draw[thick,->, shorten >=0.16cm]  (-1,1.5)--(-1,3);
    \draw[thick,->, shorten >=0.16cm]  (-1,3)--(-1,4.5);
         \draw[ thick, black, fill =  violet!30!red!30] (-1,1.5) circle (10pt);
         \node[] at (-1,1.5){ $\scriptstyle{\mathbf{3}}$};  
           \draw[ thick, black, fill = pink!50!yellow!80] (-1,3) circle (10pt);
         \node[] at (-1,3){ $\scriptstyle{\mathbf{1}}$};  
           \draw[ thick, black, fill = cyan!30] (-1,4.5) circle (10pt);
         \node[] at (-1,4.5){ $\scriptstyle{\mathbf{2}}$};  
        \draw[ thick, black, fill = white] (-1,0) circle (13pt);
     \draw[ thick, black, fill = pink!50!yellow!80] (-1,0) circle (10pt);
         \node[] at (-1,0){ $\scriptstyle{\mathbf{1}}$};  
                 \draw[ thick, black, fill = white] (1,0) circle (13pt);
     \draw[ thick, black, fill = pink!50!yellow!80] (1,0) circle (10pt);
         \node[] at (1,0){ $\scriptstyle{\mathbf{1}}$};  
         %
\end{tikzpicture}
\qquad\begin{tikzpicture}[scale=0.42]
\draw[ultra thick, dotted, lightgray] (-1.11,0)--(1,0);
 \draw[thick,->, shorten >=0.16cm]  (-1,0)--(-1,1.5);
  \draw[thick,->, shorten >=0.16cm]  (-1,1.5)--(-1,3);
    \draw[thick,->, shorten >=0.16cm]  (-1,3)--(-1,4.5);
         \draw[ thick, black, fill =  violet!30!red!30] (-1,1.5) circle (10pt);
         \node[] at (-1,1.5){ $\scriptstyle{\mathbf{3}}$};  
           \draw[ thick, black, fill = cyan!30] (-1,3) circle (10pt);
         \node[] at (-1,3){ $\scriptstyle{\mathbf{2}}$};  
           \draw[ thick, black, fill =pink!50!yellow!80 ] (-1,4.5) circle (10pt);
         \node[] at (-1,4.5){ $\scriptstyle{\mathbf{1}}$};  
        \draw[ thick, black, fill = white] (-1,0) circle (13pt);
     \draw[ thick, black, fill = pink!50!yellow!80] (-1,0) circle (10pt);
         \node[] at (-1,0){ $\scriptstyle{\mathbf{1}}$};  
                 \draw[ thick, black, fill = white] (1,0) circle (13pt);
     \draw[ thick, black, fill = pink!50!yellow!80] (1,0) circle (10pt);
         \node[] at (1,0){ $\scriptstyle{\mathbf{1}}$};  
         %
\end{tikzpicture}
\end{align*}
\begin{align*}
\begin{tikzpicture}[scale=0.42]
\draw[ultra thick, dotted, lightgray] (-1.11,0)--(1,0);
 \draw[thick,->, shorten >=0.16cm]  (-1,0)--(-1,1.5);
  \draw[thick,->, shorten >=0.16cm]  (-1,1.5)--(0,3);
    \draw[thick,->, shorten >=0.16cm]  (-1,1.5)--(-2,3);
         \draw[ thick, black, fill =  cyan!30 ] (-1,1.5) circle (10pt);
         \node[] at (-1,1.5){ $\scriptstyle{\mathbf{2}}$};  
           \draw[ thick, black, fill = pink!50!yellow!80] (-2,3) circle (10pt);
         \node[] at (-2,3){ $\scriptstyle{\mathbf{1}}$};  
           \draw[ thick, black, fill =  violet!30!red!30 ] (0,3) circle (10pt);
         \node[] at (0,3){ $\scriptstyle{\mathbf{3}}$};  
        \draw[ thick, black, fill = white] (-1,0) circle (13pt);
     \draw[ thick, black, fill = pink!50!yellow!80] (-1,0) circle (10pt);
         \node[] at (-1,0){ $\scriptstyle{\mathbf{1}}$};  
                 \draw[ thick, black, fill = white] (1,0) circle (13pt);
     \draw[ thick, black, fill = pink!50!yellow!80] (1,0) circle (10pt);
         \node[] at (1,0){ $\scriptstyle{\mathbf{1}}$};  
         %
\end{tikzpicture}
\qquad
\begin{tikzpicture}[scale=0.42]
\draw[ultra thick, dotted, lightgray] (-1.11,0)--(1,0);
 \draw[thick,->, shorten >=0.16cm]  (-1,0)--(-1,1.5);
  \draw[thick,->, shorten >=0.16cm]  (-1,1.5)--(0,3);
    \draw[thick,->, shorten >=0.16cm]  (-1,1.5)--(-2,3);
         \draw[ thick, black, fill =  violet!30!red!30] (-1,1.5) circle (10pt);
         \node[] at (-1,1.5){ $\scriptstyle{\mathbf{3}}$};  
           \draw[ thick, black, fill = pink!50!yellow!80] (-2,3) circle (10pt);
         \node[] at (-2,3){ $\scriptstyle{\mathbf{1}}$};  
           \draw[ thick, black, fill = cyan!30  ] (0,3) circle (10pt);
         \node[] at (0,3){ $\scriptstyle{\mathbf{2}}$};  
        \draw[ thick, black, fill = white] (-1,0) circle (13pt);
     \draw[ thick, black, fill = pink!50!yellow!80] (-1,0) circle (10pt);
         \node[] at (-1,0){ $\scriptstyle{\mathbf{1}}$};  
                 \draw[ thick, black, fill = white] (1,0) circle (13pt);
     \draw[ thick, black, fill = pink!50!yellow!80] (1,0) circle (10pt);
         \node[] at (1,0){ $\scriptstyle{\mathbf{1}}$};  
         %
\end{tikzpicture}
\qquad
\begin{tikzpicture}[scale=0.42]
\draw[ultra thick, dotted, lightgray] (-1.11,0)--(1,0);
 \draw[thick,->, shorten >=0.16cm]  (-1,0)--(-2,1.5);
  \draw[thick,->, shorten >=0.16cm]  (-1,0)--(0,1.5);
    \draw[thick,->, shorten >=0.16cm]  (-2,1.5)--(-2,3);
         \draw[ thick, black, fill =  cyan!30] (-2,1.5) circle (10pt);
         \node[] at (-2,1.5){ $\scriptstyle{\mathbf{2}}$};  
           \draw[ thick, black, fill =violet!30!red!30 ]  (0,1.5) circle (10pt);
         \node[] at (0,1.5){ $\scriptstyle{\mathbf{3}}$};  
           \draw[ thick, black, fill =pink!50!yellow!80 ] (-2,3) circle (10pt);
         \node[] at (-2,3){ $\scriptstyle{\mathbf{1}}$};  
        \draw[ thick, black, fill = white] (-1,0) circle (13pt);
     \draw[ thick, black, fill = pink!50!yellow!80] (-1,0) circle (10pt);
         \node[] at (-1,0){ $\scriptstyle{\mathbf{1}}$};  
                 \draw[ thick, black, fill = white] (1,0) circle (13pt);
     \draw[ thick, black, fill = pink!50!yellow!80] (1,0) circle (10pt);
         \node[] at (1,0){ $\scriptstyle{\mathbf{1}}$};  
         %
\end{tikzpicture}
\qquad
\begin{tikzpicture}[scale=0.42]
\draw[ultra thick, dotted, lightgray] (-1.11,0)--(1,0);
 \draw[thick,->, shorten >=0.16cm]  (-1,0)--(-2,1.5);
  \draw[thick,->, shorten >=0.16cm]  (-1,0)--(0,1.5);
    \draw[thick,->, shorten >=0.16cm]  (0,1.5)--(0,3);
         \draw[ thick, black, fill =  cyan!30] (-2,1.5) circle (10pt);
         \node[] at (-2,1.5){ $\scriptstyle{\mathbf{2}}$};  
           \draw[ thick, black, fill =violet!30!red!30 ]  (0,1.5) circle (10pt);
         \node[] at (0,1.5){ $\scriptstyle{\mathbf{3}}$};  
           \draw[ thick, black, fill =pink!50!yellow!80 ] (0,3) circle (10pt);
         \node[] at (0,3){ $\scriptstyle{\mathbf{1}}$};  
        \draw[ thick, black, fill = white] (-1,0) circle (13pt);
     \draw[ thick, black, fill = pink!50!yellow!80] (-1,0) circle (10pt);
         \node[] at (-1,0){ $\scriptstyle{\mathbf{1}}$};  
                 \draw[ thick, black, fill = white] (1,0) circle (13pt);
     \draw[ thick, black, fill = pink!50!yellow!80] (1,0) circle (10pt);
         \node[] at (1,0){ $\scriptstyle{\mathbf{1}}$};  
         %
\end{tikzpicture}
\qquad
\begin{tikzpicture}[scale=0.42]
\draw[ultra thick, dotted, lightgray] (-1.11,0)--(1,0);
 \draw[thick,->, shorten >=0.16cm]  (-1,0)--(-1,1.5);
  \draw[thick,->, shorten >=0.16cm]  (-1,1.5)--(-1,3);
    \draw[thick,->, shorten >=0.16cm]  (1,0)--(1,1.5);
         \draw[ thick, black, fill = cyan!30] (-1,1.5) circle (10pt);
         \node[] at (-1,1.5){ $\scriptstyle{\mathbf{2}}$};  
           \draw[ thick, black, fill = pink!50!yellow!80] (-1,3) circle (10pt);
         \node[] at (-1,3){ $\scriptstyle{\mathbf{1}}$};  
           \draw[ thick, black, fill = violet!30!red!30] (1,1.5) circle (10pt);
         \node[] at (1,1.5){ $\scriptstyle{\mathbf{3}}$};  
        \draw[ thick, black, fill = white] (-1,0) circle (13pt);
     \draw[ thick, black, fill = pink!50!yellow!80] (-1,0) circle (10pt);
         \node[] at (-1,0){ $\scriptstyle{\mathbf{1}}$};  
                 \draw[ thick, black, fill = white] (1,0) circle (13pt);
     \draw[ thick, black, fill = pink!50!yellow!80] (1,0) circle (10pt);
         \node[] at (1,0){ $\scriptstyle{\mathbf{1}}$};  
         %
\end{tikzpicture}
\qquad
\begin{tikzpicture}[scale=0.42]
\draw[ultra thick, dotted, lightgray] (-1.11,0)--(1,0);
 \draw[thick,->, shorten >=0.16cm]  (-1,0)--(-1,1.5);
  \draw[thick,->, shorten >=0.16cm]  (-1,1.5)--(-1,3);
    \draw[thick,->, shorten >=0.16cm]  (1,0)--(1,1.5);
         \draw[ thick, black, fill =violet!30!red!30 ] (-1,1.5) circle (10pt);
         \node[] at (-1,1.5){ $\scriptstyle{\mathbf{3}}$};  
           \draw[ thick, black, fill = pink!50!yellow!80] (-1,3) circle (10pt);
         \node[] at (-1,3){ $\scriptstyle{\mathbf{1}}$};  
           \draw[ thick, black, fill = cyan!30 ] (1,1.5) circle (10pt);
         \node[] at (1,1.5){ $\scriptstyle{\mathbf{2}}$};  
        \draw[ thick, black, fill = white] (-1,0) circle (13pt);
     \draw[ thick, black, fill = pink!50!yellow!80] (-1,0) circle (10pt);
         \node[] at (-1,0){ $\scriptstyle{\mathbf{1}}$};  
                 \draw[ thick, black, fill = white] (1,0) circle (13pt);
     \draw[ thick, black, fill = pink!50!yellow!80] (1,0) circle (10pt);
         \node[] at (1,0){ $\scriptstyle{\mathbf{1}}$};  
         %
\end{tikzpicture}
\end{align*}
\begin{align*}
\begin{tikzpicture}[scale=0.42]
\draw[ultra thick, dotted, lightgray] (-1.11,0)--(1,0);
 \draw[thick,->, shorten >=0.16cm]  (1,0)--(1,1.5);
  \draw[thick,->, shorten >=0.16cm]  (1,1.5)--(0,3);
    \draw[thick,->, shorten >=0.16cm]  (1,1.5)--(2,3);
         \draw[ thick, black, fill = cyan!30 ] (1,1.5) circle (10pt);
         \node[] at (1,1.5){ $\scriptstyle{\mathbf{2}}$};  
           \draw[ thick, black, fill = pink!50!yellow!80] (0,3) circle (10pt);
         \node[] at (0,3){ $\scriptstyle{\mathbf{1}}$};  
           \draw[ thick, black, fill = violet!30!red!30 ] (2,3) circle (10pt);
         \node[] at (2,3){ $\scriptstyle{\mathbf{3}}$};  
        \draw[ thick, black, fill = white] (-1,0) circle (13pt);
     \draw[ thick, black, fill = pink!50!yellow!80] (-1,0) circle (10pt);
         \node[] at (-1,0){ $\scriptstyle{\mathbf{1}}$};  
                 \draw[ thick, black, fill = white] (1,0) circle (13pt);
     \draw[ thick, black, fill = pink!50!yellow!80] (1,0) circle (10pt);
         \node[] at (1,0){ $\scriptstyle{\mathbf{1}}$};  
         %
\end{tikzpicture}
\qquad
\begin{tikzpicture}[scale=0.42]
\draw[ultra thick, dotted, lightgray] (-1.11,0)--(1,0);
 \draw[thick,->, shorten >=0.16cm]  (1,0)--(1,1.5);
  \draw[thick,->, shorten >=0.16cm]  (1,1.5)--(0,3);
    \draw[thick,->, shorten >=0.16cm]  (1,1.5)--(2,3);
         \draw[ thick, black, fill =  violet!30!red!30] (1,1.5) circle (10pt);
         \node[] at (1,1.5){ $\scriptstyle{\mathbf{3}}$};  
           \draw[ thick, black, fill = pink!50!yellow!80] (0,3) circle (10pt);
         \node[] at (0,3){ $\scriptstyle{\mathbf{1}}$};  
           \draw[ thick, black, fill = cyan!30  ] (2,3) circle (10pt);
         \node[] at (2,3){ $\scriptstyle{\mathbf{2}}$};  
        \draw[ thick, black, fill = white] (-1,0) circle (13pt);
     \draw[ thick, black, fill = pink!50!yellow!80] (-1,0) circle (10pt);
         \node[] at (-1,0){ $\scriptstyle{\mathbf{1}}$};  
                 \draw[ thick, black, fill = white] (1,0) circle (13pt);
     \draw[ thick, black, fill = pink!50!yellow!80] (1,0) circle (10pt);
         \node[] at (1,0){ $\scriptstyle{\mathbf{1}}$};  
         %
\end{tikzpicture}
\qquad
\begin{tikzpicture}[scale=0.42]
\draw[ultra thick, dotted, lightgray] (-1.11,0)--(1,0);
 \draw[thick,->, shorten >=0.16cm]  (1,0)--(2,1.5);
  \draw[thick,->, shorten >=0.16cm]  (1,0)--(0,1.5);
    \draw[thick,->, shorten >=0.16cm]  (2,1.5)--(2,3);
         \draw[ thick, black, fill = violet!30!red!30 ] (2,1.5) circle (10pt);
         \node[] at (2,1.5){ $\scriptstyle{\mathbf{3}}$};  
           \draw[ thick, black, fill =cyan!30  ]  (0,1.5) circle (10pt);
         \node[] at (0,1.5){ $\scriptstyle{\mathbf{2}}$};  
           \draw[ thick, black, fill =pink!50!yellow!80 ] (2,3) circle (10pt);
         \node[] at (2,3){ $\scriptstyle{\mathbf{1}}$};  
        \draw[ thick, black, fill = white] (1,0) circle (13pt);
     \draw[ thick, black, fill = pink!50!yellow!80] (1,0) circle (10pt);
         \node[] at (1,0){ $\scriptstyle{\mathbf{1}}$};  
                 \draw[ thick, black, fill = white] (-1,0) circle (13pt);
     \draw[ thick, black, fill = pink!50!yellow!80] (-1,0) circle (10pt);
         \node[] at (-1,0){ $\scriptstyle{\mathbf{1}}$};  
         %
\end{tikzpicture}
\qquad
\begin{tikzpicture}[scale=0.42]
\draw[ultra thick, dotted, lightgray] (-1.11,0)--(1,0);
 \draw[thick,->, shorten >=0.16cm]  (1,0)--(2,1.5);
  \draw[thick,->, shorten >=0.16cm]  (1,0)--(0,1.5);
    \draw[thick,->, shorten >=0.16cm]  (0,1.5)--(0,3);
         \draw[ thick, black, fill = violet!30!red!30 ] (2,1.5) circle (10pt);
         \node[] at (2,1.5){ $\scriptstyle{\mathbf{3}}$};  
           \draw[ thick, black, fill = cyan!30  ]  (0,1.5) circle (10pt);
         \node[] at (0,1.5){ $\scriptstyle{\mathbf{2}}$};  
           \draw[ thick, black, fill =pink!50!yellow!80 ] (0,3) circle (10pt);
         \node[] at (0,3){ $\scriptstyle{\mathbf{1}}$};  
        \draw[ thick, black, fill = white] (1,0) circle (13pt);
     \draw[ thick, black, fill = pink!50!yellow!80] (1,0) circle (10pt);
         \node[] at (1,0){ $\scriptstyle{\mathbf{1}}$};  
                 \draw[ thick, black, fill = white] (-1,0) circle (13pt);
     \draw[ thick, black, fill = pink!50!yellow!80] (-1,0) circle (10pt);
         \node[] at (-1,0){ $\scriptstyle{\mathbf{1}}$};  
         %
\end{tikzpicture}
\qquad
\begin{tikzpicture}[scale=0.42]
\draw[ultra thick, dotted, lightgray] (-1.11,0)--(1,0);
 \draw[thick,->, shorten >=0.16cm]  (1,0)--(1,1.5);
  \draw[thick,->, shorten >=0.16cm]  (1,1.5)--(1,3);
    \draw[thick,->, shorten >=0.16cm]  (-1,0)--(-1,1.5);
         \draw[ thick, black, fill =violet!30!red!30 ] (1,1.5) circle (10pt);
         \node[] at (1,1.5){ $\scriptstyle{\mathbf{3}}$};  
           \draw[ thick, black, fill = pink!50!yellow!80] (1,3) circle (10pt);
         \node[] at (1,3){ $\scriptstyle{\mathbf{1}}$};  
           \draw[ thick, black, fill = cyan!30 ] (-1,1.5) circle (10pt);
         \node[] at (-1,1.5){ $\scriptstyle{\mathbf{2}}$};  
        \draw[ thick, black, fill = white] (-1,0) circle (13pt);
     \draw[ thick, black, fill = pink!50!yellow!80] (-1,0) circle (10pt);
         \node[] at (-1,0){ $\scriptstyle{\mathbf{1}}$};  
                 \draw[ thick, black, fill = white] (1,0) circle (13pt);
     \draw[ thick, black, fill = pink!50!yellow!80] (1,0) circle (10pt);
         \node[] at (1,0){ $\scriptstyle{\mathbf{1}}$};  
         %
\end{tikzpicture}
\qquad
\begin{tikzpicture}[scale=0.42]
\draw[ultra thick, dotted, lightgray] (-1.11,0)--(1,0);
 \draw[thick,->, shorten >=0.16cm]  (1,0)--(1,1.5);
  \draw[thick,->, shorten >=0.16cm]  (1,1.5)--(1,3);
    \draw[thick,->, shorten >=0.16cm]  (-1,0)--(-1,1.5);
         \draw[ thick, black, fill = cyan!30 ] (1,1.5) circle (10pt);
         \node[] at (1,1.5){ $\scriptstyle{\mathbf{2}}$};  
           \draw[ thick, black, fill = pink!50!yellow!80] (1,3) circle (10pt);
         \node[] at (1,3){ $\scriptstyle{\mathbf{1}}$};  
           \draw[ thick, black, fill = violet!30!red!30 ] (-1,1.5) circle (10pt);
         \node[] at (-1,1.5){ $\scriptstyle{\mathbf{3}}$};  
        \draw[ thick, black, fill = white] (-1,0) circle (13pt);
     \draw[ thick, black, fill = pink!50!yellow!80] (-1,0) circle (10pt);
         \node[] at (-1,0){ $\scriptstyle{\mathbf{1}}$};  
                 \draw[ thick, black, fill = white] (1,0) circle (13pt);
     \draw[ thick, black, fill = pink!50!yellow!80] (1,0) circle (10pt);
         \node[] at (1,0){ $\scriptstyle{\mathbf{1}}$};  
         %
\end{tikzpicture}
\end{align*}
\begin{align*}
\begin{tikzpicture}[scale=0.42]
\draw[ultra thick, dotted, lightgray] (-1.11,0)--(1,0);
 \draw[thick,->, shorten >=0.16cm]  (1,0)--(1,1.5);
  \draw[thick,->, shorten >=0.16cm]  (1,1.5)--(1,3);
    \draw[thick,->, shorten >=0.16cm]  (1,3)--(1,4.5);
         \draw[ thick, black, fill = cyan!30] (1,1.5) circle (10pt);
         \node[] at (1,1.5){ $\scriptstyle{\mathbf{2}}$};  
           \draw[ thick, black, fill = pink!50!yellow!80] (1,3) circle (10pt);
         \node[] at (1,3){ $\scriptstyle{\mathbf{1}}$};  
           \draw[ thick, black, fill = violet!30!red!30] (1,4.5) circle (10pt);
         \node[] at (1,4.5){ $\scriptstyle{\mathbf{3}}$};  
        \draw[ thick, black, fill = white] (1,0) circle (13pt);
     \draw[ thick, black, fill = pink!50!yellow!80] (1,0) circle (10pt);
         \node[] at (1,0){ $\scriptstyle{\mathbf{1}}$};  
                 \draw[ thick, black, fill = white] (-1,0) circle (13pt);
     \draw[ thick, black, fill = pink!50!yellow!80] (-1,0) circle (10pt);
         \node[] at (-1,0){ $\scriptstyle{\mathbf{1}}$};  
         %
\end{tikzpicture}
\qquad\begin{tikzpicture}[scale=0.42]
\draw[ultra thick, dotted, lightgray] (-1.11,0)--(1,0);
 \draw[thick,->, shorten >=0.16cm]  (1,0)--(1,1.5);
  \draw[thick,->, shorten >=0.16cm]  (1,1.5)--(1,3);
    \draw[thick,->, shorten >=0.16cm]  (1,3)--(1,4.5);
         \draw[ thick, black, fill = cyan!30] (1,1.5) circle (10pt);
         \node[] at (1,1.5){ $\scriptstyle{\mathbf{2}}$};  
           \draw[ thick, black, fill = violet!30!red!30] (1,3) circle (10pt);
         \node[] at (1,3){ $\scriptstyle{\mathbf{3}}$};  
           \draw[ thick, black, fill = pink!50!yellow!80] (1,4.5) circle (10pt);
         \node[] at (1,4.5){ $\scriptstyle{\mathbf{1}}$};  
        \draw[ thick, black, fill = white] (1,0) circle (13pt);
     \draw[ thick, black, fill = pink!50!yellow!80] (1,0) circle (10pt);
         \node[] at (1,0){ $\scriptstyle{\mathbf{1}}$};  
                 \draw[ thick, black, fill = white] (-1,0) circle (13pt);
     \draw[ thick, black, fill = pink!50!yellow!80] (-1,0) circle (10pt);
         \node[] at (-1,0){ $\scriptstyle{\mathbf{1}}$};  
         %
\end{tikzpicture}
\qquad\begin{tikzpicture}[scale=0.42]
\draw[ultra thick, dotted, lightgray] (-1.11,0)--(1,0);
 \draw[thick,->, shorten >=0.16cm]  (1,0)--(1,1.5);
  \draw[thick,->, shorten >=0.16cm]  (1,1.5)--(1,3);
    \draw[thick,->, shorten >=0.16cm]  (1,3)--(1,4.5);
         \draw[ thick, black, fill =  violet!30!red!30] (1,1.5) circle (10pt);
         \node[] at (1,1.5){ $\scriptstyle{\mathbf{3}}$};  
           \draw[ thick, black, fill = pink!50!yellow!80] (1,3) circle (10pt);
         \node[] at (1,3){ $\scriptstyle{\mathbf{1}}$};  
           \draw[ thick, black, fill = cyan!30] (1,4.5) circle (10pt);
         \node[] at (1,4.5){ $\scriptstyle{\mathbf{2}}$};  
        \draw[ thick, black, fill = white] (1,0) circle (13pt);
     \draw[ thick, black, fill = pink!50!yellow!80] (1,0) circle (10pt);
         \node[] at (1,0){ $\scriptstyle{\mathbf{1}}$};  
                 \draw[ thick, black, fill = white] (-1,0) circle (13pt);
     \draw[ thick, black, fill = pink!50!yellow!80] (-1,0) circle (10pt);
         \node[] at (-1,0){ $\scriptstyle{\mathbf{1}}$};  
         %
\end{tikzpicture}
\qquad\begin{tikzpicture}[scale=0.42]
\draw[ultra thick, dotted, lightgray] (-1.11,0)--(1,0);
 \draw[thick,->, shorten >=0.16cm]  (1,0)--(1,1.5);
  \draw[thick,->, shorten >=0.16cm]  (1,1.5)--(1,3);
    \draw[thick,->, shorten >=0.16cm]  (1,3)--(1,4.5);
         \draw[ thick, black, fill =  violet!30!red!30] (1,1.5) circle (10pt);
         \node[] at (1,1.5){ $\scriptstyle{\mathbf{3}}$};  
           \draw[ thick, black, fill = cyan!30] (1,3) circle (10pt);
         \node[] at (1,3){ $\scriptstyle{\mathbf{2}}$};  
           \draw[ thick, black, fill =pink!50!yellow!80 ] (1,4.5) circle (10pt);
         \node[] at (1,4.5){ $\scriptstyle{\mathbf{1}}$};  
        \draw[ thick, black, fill = white] (1,0) circle (13pt);
     \draw[ thick, black, fill = pink!50!yellow!80] (1,0) circle (10pt);
         \node[] at (1,0){ $\scriptstyle{\mathbf{1}}$};  
                 \draw[ thick, black, fill = white] (-1,0) circle (13pt);
     \draw[ thick, black, fill = pink!50!yellow!80] (-1,0) circle (10pt);
         \node[] at (-1,0){ $\scriptstyle{\mathbf{1}}$};  
         %
\end{tikzpicture}
\end{align*}
\caption{All twenty non-isomorphic \(((3,1,1),(1,1))\)-forests.}
\label{fig:20for}       
\end{figure}

\begin{Remark}
It follows from definitions (and we will implicitly use the fact throughout) that if \(F\) is a \((\lambda, \underline{c})\)-forest, then \(v \in V_F\) will have in-degree 0 if and only if \(v\) is a root, and \(v\) will have in-degree \(1\) otherwise. The out-degree of \(v\) may range from 0 to \(k-1\).
\end{Remark}

\section{Enumerating \((\lambda, \underline c)\)-forests}

\subsection{A necessary criterion for \((\lambda, \underline{c})\)-forests}
For \(\underline{c} \in [1,k]^m\), \(i \in [1,k]\), we set \[i_{\underline c} = \#\{j \in [1,m] \mid c_j = i\}.\] It is easy to see that \(f_{\lambda,\underline c} = f_{\lambda,\underline d}\) provided that \(i_{\underline c} = i_{\underline d}\) for all \(i \in [1,k]\). 
\begin{Lemma}\label{nkreqLem} For \(\lambda \in \Lambda_k\) and \(\underline{c} \in [1,k]^m\), we have \(f_{\lambda,\underline c} \neq 0\) only if 
\begin{align}\label{nkreq}
0 \leq \lambda_i - i_{\underline c} \leq |\lambda| -\lambda_i\;\; \textup{ for }i \in [1,k].
\end{align}
\end{Lemma}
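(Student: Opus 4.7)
My plan is to verify the two inequalities in (\ref{nkreq}) separately, working directly from the structural definition of a \((\lambda, \underline{c})\)-forest. Throughout, let \(F\) be a \((\lambda, \underline c)\)-forest with coloring \(\ell_F\) and root order \(\mathsf{r}_F\).

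For the lower bound \(\lambda_i \geq i_{\underline{c}}\), the argument is essentially immediate: each of the \(i_{\underline{c}}\) roots of color \(i\) already contributes one vertex to the color-\(i\) class of \(V_F\), which has size \(\lambda_i\), so \(\lambda_i \geq i_{\underline{c}}\).

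For the upper bound \(\lambda_i - i_{\underline{c}} \leq |\lambda| - \lambda_i\), I would interpret the left-hand side as the number of \emph{non-root} vertices of color \(i\) in \(F\), and construct an injection from this set into the set of vertices of color \(\neq i\) (which has size \(|\lambda|-\lambda_i\)). The natural candidate is the parent map \(p\) sending each non-root vertex to its unique in-neighbor; this is well-defined by the earlier remark on in-degrees in a rooted forest. If \(v\) is a non-root vertex with \(\ell_F(v) = i\), then \(v \in \overline{N^+}(p(v))\) has color \(i\), so \(\overline{N^+}\)-injectivity of \(\ell_F\) at \(p(v)\) forces \(\ell_F(p(v)) \neq i\) (otherwise \(\overline{N^+}(p(v))\) would contain both \(p(v)\) and \(v\) as distinct color-\(i\) vertices). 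Invoking \(\overline{N^+}\)-injectivity at \(p(v)\) a second time shows that \(p(v)\) has at most one out-neighbor of color \(i\), so the restriction of \(p\) to the set of non-root color-\(i\) vertices is itself injective. Combining these two observations yields the required injection and hence the inequality.

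The argument presents no real obstacle; the only care needed is to invoke the \(\overline{N^+}\)-injective coloring property twice, once to force the parent of a color-\(i\) vertex to have a different color, and once to establish the injectivity of the parent map on non-root color-\(i\) vertices.
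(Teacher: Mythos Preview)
Your proposal is correct and follows essentially the same approach as the paper: both arguments obtain the right inequality by sending each non-root color-\(i\) vertex to its unique parent, using \(\overline{N^+}\)-injectivity once to show the parent is not color \(i\) and once to show distinct color-\(i\) vertices have distinct parents. The paper phrases this in terms of the set \(\bigcup_{x \in X_i} N^-(x)\), but the content is identical.
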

\begin{proof}
The left inequality is immediate from definitions. For the right inequality, assume \(F\) is a \((\lambda,\underline{c})\)-forest. Let \(i \in [1,k]\), and set \(X_i= (V_F)_i \backslash \mathsf{r}([1,m])\); i.e., \(X_i\) is the the set of \(i\)-colored non-roots in \(F\). It follows that  \(|X_i| = \lambda_i - i_{\underline{c}}\). Therefore \(\left|\bigcup_{x \in X}N^-(x)\right| = \lambda_i - i_{\underline{c}}\) as well, since every vertex in \(X_i\) is the target of a single arrow, and no single vertex can be the source of two distinct arrows which target elements of \(X_i\) because \(F\) is injectively \(k\)-colored. Moreover \(\bigcup_{x \in X_i}N^-(x)\) must consist entirely of non-\(i\)-colored vertices, of which there are \(|\lambda|- \lambda_i\) in total, giving the result.
\end{proof}

\subsection{A recurrence for \((\lambda, \underline c)\)-forests}


For \(\underline{c} \in [1,k]^m\) and \(S = \{s_1 < \cdots < s_{|S|}\} \subseteq [1,k-1]\), it will be useful to denote:
\begin{align*}
\underline{c}(S) := (c_1, \ldots, c_{m-1}, s_1, \ldots, s_{|S|}) \in [1,k]^{m-1 + |S|}.
\end{align*}

\begin{Lemma}\label{recurf}
Assume \(\lambda \in \Lambda_k\) and \(\underline{c} \in [1,k]^m\) satisfy (\ref{nkreq}), with \(c_m = k\). Then we have
\begin{align*}
f_{\lambda,\underline c}
=
\sum_{S \subseteq [1,k-1]}
f_{\lambda- \varepsilon_k,\underline{c}(S)}.
\end{align*}
\end{Lemma}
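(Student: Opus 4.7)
The plan is to establish a bijection
\[
\Phi \colon \mathcal{F}_{\lambda, \underline c} \;\xrightarrow{\sim}\; \bigsqcup_{S \subseteq [1,k-1]} \mathcal{F}_{\lambda - \varepsilon_k,\, \underline c(S)},
\]
realized as \emph{pruning the last root}. Taking cardinalities then yields the recurrence. The key rigidifying observation is that, because $c_m = k$ and $F$ is injectively $k$-colored, the colors on $\overline{N^+}(\mathsf{r}_F(m))$ are pairwise distinct, so the out-neighbors of the last root wear distinct colors drawn from $[1,k-1]$.

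For the forward map, given $F \in \mathcal{F}_{\lambda, \underline c}$, I would set $r := \mathsf{r}_F(m)$, let $S := \ell_F(N^+(r)) \subseteq [1,k-1]$, and write $S = \{s_1 < \cdots < s_{|S|}\}$. Then I define $\Phi(F)$ by deleting $r$ and all arrows incident to it, retaining the coloring on the remaining vertices, and declaring the new root order to consist of $\mathsf{r}_F(1), \ldots, \mathsf{r}_F(m-1)$ followed by the unique $s_j$-colored element of $N^+(r)$ for $j = 1, \ldots, |S|$. I would then verify that $\Phi(F) \in \mathcal{F}_{\lambda - \varepsilon_k, \underline c(S)}$: the character drops by $\varepsilon_k$ (one color-$k$ vertex removed), the root-color sequence becomes $\underline c(S)$ by construction, the rooted-forest axiom persists because each vertex previously reached via $r$ is now reached directly from the promoted out-neighbor bearing its color, and the injective-coloring condition survives since $r$---having in-degree $0$---lay in no other vertex's out-neighborhood.

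The inverse attaches a new color-$k$ root: given $F' \in \mathcal{F}_{\lambda - \varepsilon_k, \underline c(S)}$, one adjoins a vertex $r$ with $\ell(r) = k$ and arrows $r \to \mathsf{r}_{F'}(m-1+j)$ for $j \in [1,|S|]$, then relabels the root order so that $r$ is the $m$-th root while the first $m-1$ roots are unchanged. The only nontrivial verification is injectivity at $r$: the colors on $\overline{N^+}(r)$ are $\{k\} \cup S$, a set of $|S|+1$ distinct colors precisely because $S \subseteq [1,k-1]$. Injectivity at every other vertex is automatic since $r$ lies in no other out-neighborhood.

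These maps are manifestly mutually inverse, with $S$ recovered from $F$ via $S = \ell_F(N^+(\mathsf{r}_F(m)))$. The main obstacle is purely bookkeeping: one must simultaneously check that pruning/grafting the last root preserves the character, the rooted-forest structure, the root-color sequence, and the injective coloring---and that the promoted roots are ordered in agreement with $\underline c(S)$. Both observations reduce to the single fact that no out-neighborhood other than that of $r$ is altered by the operation. Summing over $S$ then gives the claimed identity.
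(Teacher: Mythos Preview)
Your proof is correct and follows essentially the same approach as the paper: both construct a bijection by deleting the $k$-colored $m$th root and promoting its out-neighbors (ordered by color) to new roots, with the inverse given by grafting a fresh $k$-colored root onto the last $|S|$ roots. The verifications you outline match the paper's, which likewise notes that the construction is well-defined on isomorphism classes and that the two maps are mutual inverses.
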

\begin{proof}
Let \(F\) be a \((\lambda, \underline{c})\)-forest. Assume \(N^+(\mathsf{r}_F(m)) = \{x_1, \ldots, x_{|X|}\}\), with elements labeled so that \(\ell_F(x_1) < \cdots < \ell_F(x_{|X|})\). Note that \(S:=\{\ell_F(x) \mid x \in X\}\) is a subset of \([1,k-1]\) of cardinality \(|X|\). Then it is straightforward to check that setting:
\begin{align*}
V_{F'} = V_F \backslash \{\mathsf{r}(m)\};
\qquad
A' = A \cap (V_{F'} \times V_{F'});
\qquad
\ell_{F'} = \ell_F|_{V_{F'}};
\end{align*}
\begin{align*}
\mathsf{r}_{F'}(j) =
\begin{cases}
\mathsf{r}_F(j) & \textup{if }1 \leq j \leq m-1;\\
x_{j-m+1} & \textup{if } m \leq j \leq m-1+|X|,
\end{cases}
\end{align*}
defines a \((\lambda - \varepsilon_k, \underline{c}(S))\)-forest \(F'\). Informally, \(F'\) is achieved by deleting from \(F\) the \(k\)-colored \(m\)th root and all arrows from this root; see Figure~\ref{fig:recurfun} for a depiction. The assignment \(\theta: F \mapsto F'\) defines a function 
\begin{align*}
\theta:\mathcal{F}_{\lambda,\underline c} \to \bigcup_{S \subseteq [1,k-1]} \mathcal{F}_{\lambda- \varepsilon_k,\underline{c}(S)},
\end{align*}
noting that \(\theta\) is well-defined on isomorphism classes of \((\lambda, \underline{c})\)-forests.

\begin{figure}[h]
\begin{align*}
{}
\hackcenter{
\begin{tikzpicture}[scale=0.42]
\draw[ultra thick, dotted, lightgray] (-3,-1)--(2,-1);
   \draw[thick,->, shorten >=0.16cm]  (0-3,-1)--(1-3,0.5);
      \draw[thick,->, shorten >=0.16cm]  (0-3,-1)--(-1-3,0.5);
         \draw[thick,->, shorten >=0.16cm]  (-1-3,0.5)--(-2-3,2);
            \draw[thick,->, shorten >=0.16cm]  (-1-3,0.5)--(0-3,2);
       \draw[ thick, black, fill = white] (0-3,-1) circle (13pt);
     \draw[ thick, black, fill = cyan!30] (0-3,-1) circle (10pt);
         \node[] at (0-3,-1){ $\scriptstyle{\mathbf{2}}$};  
           \draw[ thick, black, fill = violet!30!red!30] (1-3,0.5) circle (10pt);
         \node[] at (1-3,0.5){ $\scriptstyle{\mathbf{3}}$};
            \draw[ thick, black, fill = orange!50] (-1-3,0.5) circle (10pt);
         \node[] at (-1-3,0.5){ $\scriptstyle{\mathbf{5}}$};    
            \draw[ thick, black, fill =cyan!30] (-2-3,2)circle (10pt);
         \node[] at (-2-3,2){ $\scriptstyle{\mathbf{2}}$};    
            \draw[ thick, black, fill = lime] (0-3,2) circle (10pt);
         \node[] at (0-3,2){ $\scriptstyle{\mathbf{4}}$};     
     \draw[thick,->, red, shorten >=0.16cm]  (0+2,-1)--(-2+2,0.5);
      \draw[thick,->, red, shorten >=0.16cm]  (0+2,-1)--(-0+2,0.5);
            \draw[thick,->, red, shorten >=0.16cm]  (0+2,-1)--(2+2,0.5);
             \draw[thick,->, shorten >=0.16cm]  (0+2,0.5)--(-0+2,2);
             \draw[thick,->, shorten >=0.16cm]  (-0+2,2)--(-2+2,3.5);
                   \draw[thick,->, shorten >=0.16cm]  (-0+2,2)--(-0+2,3.5);
                         \draw[thick,->, shorten >=0.16cm]  (-0+2,2)--(2+2,3.5);
                           \draw[thick,->, shorten >=0.16cm]  (2+2,0.5)--(2+2,2);
           \draw[ thick, red, fill = white] (0+2,-1) circle (13pt);
          \draw[ thick, red, fill = orange!50] (0+2,-1) circle (10pt);
         \node[] at (0+2,-1){ $\scriptstyle{\mathbf{5}}$}; 
             \draw[ thick, black, fill = pink!50!yellow!80] (-2+2,0.5) circle (10pt);
         \node[] at (-2+2,0.5){ $\scriptstyle{\mathbf{1}}$}; 
            \draw[ thick, black, fill = lime] (2+2,0.5) circle (10pt);
         \node[] at (2+2,0.5){ $\scriptstyle{\mathbf{4}}$}; 
            \draw[ thick, black, fill = cyan!30] (0+2,0.5) circle (10pt);
         \node[] at (0+2,0.5){ $\scriptstyle{\mathbf{2}}$}; 
            \draw[ thick, black, fill = violet!30!red!30] (2+2,2) circle (10pt);
         \node[] at (2+2,2){ $\scriptstyle{\mathbf{3}}$}; 
           \draw[ thick, black, fill = violet!30!red!30] (0+2,3.5) circle (10pt);
         \node[] at (0+2,3.5){ $\scriptstyle{\mathbf{3}}$}; 
          \draw[ thick, black, fill = pink!50!yellow!80] (0+2,2) circle (10pt);
         \node[] at (0+2,2){ $\scriptstyle{\mathbf{1}}$}; 
               \draw[ thick, black, fill = lime] (2+2,3.5) circle (10pt);
         \node[] at (2+2,3.5){ $\scriptstyle{\mathbf{4}}$}; 
               \draw[ thick, black, fill = cyan!30] (-2+2,3.5) circle (10pt);
         \node[] at (-2+2,3.5){ $\scriptstyle{\mathbf{2}}$}; 
\end{tikzpicture}
}
\qquad
\substack{\displaystyle\theta\\
\displaystyle \mapsto
}
\qquad
\hackcenter{
\begin{tikzpicture}[scale=0.42]
\draw[ultra thick, dotted, lightgray] (-2,-1)--(4,-1);
   \draw[thick,->, shorten >=0.16cm]  (0-2,-1)--(1-2,0.5);
      \draw[thick,->, shorten >=0.16cm]  (0-2,-1)--(-1-2,0.5);
         \draw[thick,->, shorten >=0.16cm]  (-1-2,0.5)--(-2-2,2);
            \draw[thick,->, shorten >=0.16cm]  (-1-2,0.5)--(0-2,2);
       \draw[ thick, black, fill = white] (0-2,-1) circle (13pt);
     \draw[ thick, black, fill = cyan!30] (0-2,-1) circle (10pt);
         \node[] at (0-2,-1){ $\scriptstyle{\mathbf{2}}$};  
           \draw[ thick, black, fill = violet!30!red!30] (1-2,0.5) circle (10pt);
         \node[] at (1-2,0.5){ $\scriptstyle{\mathbf{3}}$};
            \draw[ thick, black, fill = orange!50] (-1-2,0.5) circle (10pt);
         \node[] at (-1-2,0.5){ $\scriptstyle{\mathbf{5}}$};    
            \draw[ thick, black, fill =cyan!30] (-2-2,2)circle (10pt);
         \node[] at (-2-2,2){ $\scriptstyle{\mathbf{2}}$};    
            \draw[ thick, black, fill = lime] (0-2,2) circle (10pt);
         \node[] at (0-2,2){ $\scriptstyle{\mathbf{4}}$};     
             \draw[thick,->, shorten >=0.16cm]  (0+2,0.5-1.5)--(-0+2,2-1.5);
             \draw[thick,->, shorten >=0.16cm]  (-0+2,2-1.5)--(-2+2,3.5-1.5);
                   \draw[thick,->, shorten >=0.16cm]  (-0+2,2-1.5)--(-0+2,3.5-1.5);
                         \draw[thick,->, shorten >=0.16cm]  (-0+2,2-1.5)--(2+2,3.5-1.5);
                           \draw[thick,->, shorten >=0.16cm]  (2+2,0.5-1.5)--(2+2,2-1.5);
          \draw[ thick, black, fill = white] (-2+2,-1) circle (13pt);
            \draw[ thick, black, fill = white] (0+2,-1) circle (13pt);
           \draw[ thick, black, fill = white] (2+2,-1) circle (13pt);
             \draw[ thick, black, fill = pink!50!yellow!80] (-2+2,0.5-1.5) circle (10pt);
         \node[] at (-2+2,0.5-1.5){ $\scriptstyle{\mathbf{1}}$}; 
            \draw[ thick, black, fill =lime] (2+2,0.5-1.5) circle (10pt);
         \node[] at (2+2,0.5-1.5){ $\scriptstyle{\mathbf{4}}$}; 
            \draw[ thick, black, fill = cyan!30] (0+2,0.5-1.5) circle (10pt);
         \node[] at (0+2,0.5-1.5){ $\scriptstyle{\mathbf{2}}$}; 
            \draw[ thick, black, fill = violet!30!red!30] (2+2,2-1.5) circle (10pt);
         \node[] at (2+2,2-1.5){ $\scriptstyle{\mathbf{3}}$}; 
           \draw[ thick, black, fill = violet!30!red!30] (0+2,3.5-1.5) circle (10pt);
         \node[] at (0+2,3.5-1.5){ $\scriptstyle{\mathbf{3}}$}; 
          \draw[ thick, black, fill = pink!50!yellow!80] (0+2,2-1.5) circle (10pt);
         \node[] at (0+2,2-1.5){ $\scriptstyle{\mathbf{1}}$}; 
               \draw[ thick, black, fill = lime] (2+2,3.5-1.5) circle (10pt);
         \node[] at (2+2,3.5-1.5){ $\scriptstyle{\mathbf{4}}$}; 
               \draw[ thick, black, fill = cyan!30] (-2+2,3.5-1.5) circle (10pt);
         \node[] at (-2+2,3.5-1.5){ $\scriptstyle{\mathbf{2}}$}; 
          \draw[ thick, white, fill = white] (-2+2,3.5) circle (10pt);
\end{tikzpicture}
}
\end{align*}
\caption{On the left, a forest \(F \in \mathcal{F}_{(2, 4, 3, 3, 2),(2,5)}\), on the right \(\theta(F) \in \mathcal{F}_{(2,4,3,3,1),(2,1,2,4)}\). The map \(\theta\) deletes the last root in \(F\) and all arrows from this root.
}
\label{fig:recurfun}       
\end{figure}
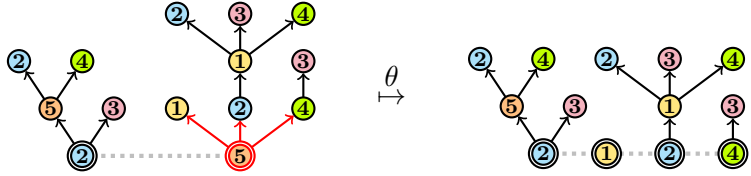

In the other direction, assume \(F'\) is a \((\lambda - \varepsilon_k, \underline{c}(S))\)-forest for some \(S \subseteq [1,k-1]\). Again it is straightforward to check that setting
\begin{align*}
V_F = V_{F'} \cup \{\bullet \};
\qquad
A_F = A_{F'} \cup\{ (\bullet, \mathsf{r}_{F'}(j)) \mid m \leq j \leq m-1 + |S|\};
\end{align*}
\begin{align*}
\mathsf{r}_F =
\begin{cases}
\mathsf{r}_{F'}(j) & \textup{if }1 \leq j \leq m-1;\\
\bullet & \textup{if } j = m;
\end{cases}
\qquad
\ell_F(v) =
\begin{cases}
\ell_{F'}(v) & \textup{if }v \in V_{F'};\\
k & \textup{if }v = \bullet,
\end{cases}
\end{align*}
defines a \((\lambda, \underline{c})\)-forest \(F\). Informally, \(F\) is achieved by adding an additional \(k\)-colored root along with arrows from this root to the last \(|S|\) roots in \(F'\). The assignment \(\theta' : F' \mapsto F\) defines a function
\begin{align*}
\theta': \bigcup_{S \subseteq [1,k-1]} \mathcal{F}_{\lambda- \varepsilon_k,\underline{c}(S)} \to \mathcal{F}_{\lambda,\underline c},
\end{align*}
noting again that \(\theta'\) is well-defined on isomorphism classes.
As constructed, \(\theta, \theta'\) are mutual inverses, so the result follows.
\end{proof}

\subsection{A key polynomial}
Write \(R_k = \mathbb{Z}[x_1, \ldots, x_k, y_1, \ldots, y_k]\) for the ring of integral polynomials in \(2k\) variables. For \(S \in [1,k]\), we define:
\begin{align*}
x_{(S,+)} := \sum_{s \in S} x_s; 
\qquad
x_{(S,\times)} := \prod_{s \in S} x_s;
\qquad
y_{(S,+)} := \sum_{s \in S} y_s; 
\qquad
y_{(S,\times)} := \prod_{s \in S} y_s.
\end{align*}
The polynomial \(P_k \in R_k\) defined in (\ref{ThePoly1}) will play a key role in this paper. Via the shorthand polynomials described above, we may alternatively write this polynomial more compactly:
\begin{align}\label{ThePoly2}
P_k(x_1, \ldots, x_k,y_1, \ldots, y_k) =
\sum_{
\substack{
S \subseteq [1,k]\\
}
}
(-1)^{|S|}
\big(x_{(S,+)}-|S|x_{([1,k],+)}\big)
x_{([1,k],+)}^{k-|S|-1}
y_{(S,\times)}.
\end{align}
For clarity, note that we interpret \(x_{(\varnothing,+)} = 0\), \(x_{(\varnothing, \times)} = 1\), and similarly for \(y\). We also make sense of the \(S = [1,k]\) summand in (\ref{ThePoly2}) by noting that:
\begin{align*}
(-1)^{k}(x_{([1,k],+)} - kx_{([1,k],+)} )x_{([1,k],+)}^{-1}y_{([1,k],\times)}
= (-1)^k(1-k)y_1 \cdots y_k.
\end{align*}

\begin{Example}
We have, for instance, the polynomials
\begin{align*}
P_2(x_1,x_2,y_1,y_2) = x_1 y_2 + x_2 y_1 - y_1 y_2,
\end{align*}
and
\begin{align*}
P_3(x_1, x_2, x_3, y_1, y_2, y_3) &= x_1^2y_2 + x_1^2y_3 + x_2^2y_1 + x_2^2y_3 + x_3^2y_1 + x_3^2y_2 + 2y_1y_2y_3 \\
&
\hspace{5mm}+
x_1x_2y_1 + x_1x_2y_2 + 2x_1x_2y_3
+x_1x_3y_1 + 2x_1x_3y_2 +x_1x_3y_3\\
&
\hspace{10mm}+2x_2x_3y_1 + x_2x_3y_2 + x_2 x_3y_3
-x_1y_1y_2 -x_1y_1y_3 -2x_1y_2y_3\\
&
\hspace{15mm}-x_2y_1y_2 -2x_2y_1y_3 - x_2y_2y_3
-2x_3y_1y_2-x_3y_1y_3 -x_3y_2y_3.
\end{align*}
\end{Example}
For \(I \subseteq [1,k-1]\) we further define:
\begin{align}\label{TweakedPoly}
P^{(I)}_k := P(x_1, \ldots, x_{k-1}, x_k-1, y_1 + \delta_{1 \in I}, \ldots, y_{k-1} + \delta_{(k-1) \in I}, y_k-1) \in R_k.
\end{align}

Parts (i)--(iii) of the following lemma are straightforward to verify from (\ref{ThePoly2}). Part (iv) is much less so, and as the verification of this result is long and technical, we relegate the proof to the appendix \S\ref{AppSec}, and Corollary~\ref{ProLiv} in particular.

\begin{Lemma}\label{ThePolyLem}
Let \(k \in \mathbb{N}\). Then:
\begin{enumerate}
\item We have
\(
P_k(x_1, \ldots, x_k, 0, \ldots, 0) = 0.
\)
\item For all \(1 \leq a< b \leq k\),  we have
\begin{align*}
P_k(\delta_{1 \in \{a,b\}}, \ldots, \delta_{k \in \{a,b\}}, \delta_{1 \in \{a,b\}}y_1, \ldots, \delta_{k \in \{a,b\}}y_k)=2^{k-2}(y_a + y_b - y_ay_b).
\end{align*}
\item For all \(\sigma \in \mathfrak{S}_k\), we have
\begin{align*}
P_k(x_1, \ldots, x_k, y_1, \ldots, y_k) = P_k(x_{\sigma 1}, \ldots, x_{\sigma k}, y_{\sigma 1}, \ldots, y_{\sigma k}).
\end{align*}
\item We have
\begin{align}
P_k \cdot \prod_{i=1}^{k-1}(x_{([1,k],+)}-x_i-1) =\sum_{I \subseteq [1,k-1]}
P^{(I)}_k \cdot
\prod_{i \in I}(x_i - y_i) \hspace{-2mm} \prod_{j \in [1,k-1]\backslash I}(x_{([1,k],+)}-2x_j+y_j).
\label{TheEqn}
\end{align}
\end{enumerate}
\end{Lemma}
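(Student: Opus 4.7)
Parts (i)--(iii) follow immediately from the compact formulation (\ref{ThePoly2}). For (i), substituting $y_1 = \cdots = y_k = 0$ kills every summand with $S \ne \varnothing$ via the factor $y_{(S,\times)}$, while the $S = \varnothing$ summand already vanishes since $x_{(\varnothing,+)} - 0 \cdot x_{([1,k],+)} = 0$. For (ii), the specified specialization leaves only subsets $S \subseteq \{a,b\}$ contributing; I would evaluate the four cases $S = \varnothing, \{a\}, \{b\}, \{a,b\}$ directly (being careful with the $S = [1,k]$ convention when $k=2$) and combine to obtain $2^{k-2}(y_a + y_b - y_a y_b)$. For (iii), the $S$-summand of (\ref{ThePoly2}) depends on the indexed variables only through the symmetric quantities $x_{(S,+)}$, $y_{(S,\times)}$ and $x_{([1,k],+)}$, so as $S$ ranges over all subsets of $[1,k]$ the total sum is manifestly invariant under any simultaneous permutation of the $x_i$ and $y_i$.

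Part (iv) is the substantive claim and the main obstacle. My plan is to exploit the structural fact, visible in (\ref{ThePoly2}), that $P_k$ is \emph{multi-linear} in $(y_1, \ldots, y_k)$: since $y_{(S,\times)} = \prod_{s \in S} y_s$ is squarefree, each $y_i$ appears to degree at most one. Let $E_i$ denote the shift operator $y_i \mapsto y_i + 1$ and set $P_k' := P_k|_{x_k \to x_k - 1,\, y_k \to y_k - 1}$, so that $P_k^{(I)} = \bigl(\prod_{i \in I} E_i\bigr) P_k'$. Since $E_i$ commutes with every other factor (none of which involves $y_i$), the RHS of (\ref{TheEqn}) collapses into a single product of commuting operators acting on $P_k'$:
\begin{align*}
\text{RHS} = \prod_{i=1}^{k-1} \bigl[(x_{([1,k],+)} - 2x_i + y_i) + (x_i - y_i)E_i\bigr]\, P_k'  =  \prod_{i=1}^{k-1}\bigl[(x_{([1,k],+)} - x_i) + (x_i - y_i)D_i\bigr]\, P_k',
\end{align*}
where $D_i = E_i - 1$ is the forward difference in $y_i$. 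Identity (iv) then becomes the operator identity comparing this product, applied to $P_k'$, with the polynomial $P_k \cdot \prod_{i=1}^{k-1}(x_{([1,k],+)} - x_i - 1)$.

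To finish, I would expand $P_k = \sum_{T \subseteq [1,k]} a_T(x)\, y_{(T,\times)}$ with $a_T(x)$ read off from (\ref{ThePoly2}), and use multi-linearity (which makes $D_i$ act by extracting the coefficient of $y_i$) to match coefficients of each squarefree monomial $y_{(T,\times)}$ on both sides, reducing (iv) to a family of polynomial identities in the $x$-variables alone. The chief difficulty I anticipate is the book-keeping needed to track how the asymmetric shifts $x_k \mapsto x_k - 1$, $y_k \mapsto y_k - 1$ encoded in $P_k'$ interact with the $y_i$-shifts for $i < k$ across the full product, since a single $T$ receives contributions from many $I$. If this direct expansion proves too unwieldy, my fallback is induction on $k$, peeling off one factor at a time and using part (iii) to symmetrize, with the $k = 2$ case providing a base readily checked from $P_2 = x_1 y_2 + x_2 y_1 - y_1 y_2$.
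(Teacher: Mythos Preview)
Your treatment of (i)--(iii) matches the paper's: both simply appeal to the compact form~(\ref{ThePoly2}), and your brief explanations for each part are correct.

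For (iv), your operator reformulation is correct and genuinely different from the paper's approach. The observation that $P_k$ is multilinear in the $y_i$, together with the rewriting
\[
\text{RHS}=\prod_{i=1}^{k-1}\bigl[(x_{([1,k],+)}-x_i)+(x_i-y_i)D_i\bigr]\,P_k'
\]
(with $D_i=E_i-1$), is a clean repackaging that the paper does not exploit. The paper instead introduces an auxiliary variable $z$ to decouple $x_{([1,k],+)}$, splits the right side into five explicit sums $J^{(1)},\ldots,J^{(5)}$, and verifies equality by brute-force coefficient comparison against monomials $x_{(T_1\sqcup T_2,\times)}y_{(T_2\sqcup T_3,\times)}z^q$, repeatedly invoking a technical six-fold summation identity (Lemma~\ref{wildLem}). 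Your framing isolates the $y$-structure more transparently and could plausibly shorten the argument.

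That said, your proposal stops at the plan stage: the coefficient-matching you describe still requires tracking, for each squarefree $y_{(T,\times)}$, contributions from all $J\subseteq[1,k-1]$ with $J\subseteq T$ on the right, against the single coefficient $a_T(x)\prod_i(x_{([1,k],+)}-x_i-1)$ on the left, with the $x_k\mapsto x_k-1$, $y_k\mapsto y_k-1$ shifts threaded through. You correctly flag this as the crux, but you have not shown it closes; the paper's appendix suggests the residual $x$-identities are not trivial. Your fallback---induction on $k$---is unlikely to work as stated: identity~(iv) is specific to $P_k$ and its shifts $P_k^{(I)}$, and there is no evident recursion expressing these in terms of $P_{k-1}$, so ``peeling off one factor'' does not naturally reduce to a smaller instance of the same identity. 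If you pursue the operator route, the honest next step is to carry out the $y_{(T,\times)}$-coefficient comparison in full rather than defer to induction.
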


\subsection{Enumerating \((\lambda, \underline{c})\)-forests}\label{enumfor}
Now we prove our main theorem.

\begin{Theorem}\label{MainForThm}
Let \(k \in \mathbb{N}\), \(m \in \mathbb{Z}_{\geq 0}\), \(\lambda \in \Lambda_k\), and \(\underline{c} \in [1,k]^m\). If \(|\lambda| = \lambda_i\) for some \(i \in [1,k]\), then \(f_{\lambda,\underline c} = \delta_{m, |\lambda|} \delta_{m, i_{\underline c}}\). Otherwise:
\begin{align}\label{forclosed}
f_{\lambda,\underline c} = P_k(\lambda_1, \ldots, \lambda_k, 1_{\underline c}, \ldots, k_{\underline c})
\prod_{i=1}^k \frac{1}{|\lambda| - \lambda_i}{|\lambda| - \lambda_i \choose \lambda_i - i_{\underline c}}.
\end{align}
\end{Theorem}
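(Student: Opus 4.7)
The plan is to induct on $n = |\lambda|$, with trivial base cases $n \leq 1$. The first clause, $|\lambda| = \lambda_i$ for some $i$, is immediate: a single-color injective coloring forces every vertex to be isolated, so every vertex is a root, giving $f_{\lambda, \underline c} = \delta_{m, n}\delta_{m, i_{\underline c}}$. From now on assume $|\lambda| \neq \lambda_i$ for all $i$. If $m = 0$, then $f_{\lambda, \underline c} = 0$ (any non-empty forest needs at least one root), which matches the RHS of (\ref{forclosed}) by Lemma~\ref{ThePolyLem}(i); so I may take $m \geq 1$.

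Next I exploit symmetry. Formula (\ref{forclosed}) is invariant under simultaneous permutation of the colors in $\lambda$ and $\underline c$ by Lemma~\ref{ThePolyLem}(iii), and $f_{\lambda, \underline c}$ depends only on the multiset of entries of $\underline c$, as noted before Lemma~\ref{nkreqLem}. Hence I may assume $k_{\underline c} \geq 1$ and $c_m = k$. Lemma~\ref{recurf} then gives
\begin{align*}
f_{\lambda, \underline c} = \sum_{S \subseteq [1, k-1]} f_{\lambda - \varepsilon_k,\, \underline{c}(S)},
\end{align*}
and the inductive hypothesis applies since $|\lambda - \varepsilon_k| = n - 1$. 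The sequence $\underline{c}(S)$ has color counts $i_{\underline c} + \delta_{i \in S}$ for $i < k$ and $k_{\underline c} - 1$ for $i = k$, so upon substitution the inductive formula produces exactly $P_k^{(S)}(\lambda_1,\ldots,\lambda_k,1_{\underline c},\ldots,k_{\underline c})$ (see (\ref{TweakedPoly})) times a binomial product evaluated at $(\lambda - \varepsilon_k, \underline{c}(S))$.

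The central calculation is to rewrite this binomial product in terms of the one appearing in (\ref{forclosed}). The elementary identities
\begin{align*}
\frac{1}{n-1-a}\binom{n-1-a}{a-b} &= \frac{n-2a+b}{n-a-1}\cdot\frac{1}{n-a}\binom{n-a}{a-b}, \\
\frac{1}{n-1-a}\binom{n-1-a}{a-b-1} &= \frac{a-b}{n-a-1}\cdot\frac{1}{n-a}\binom{n-a}{a-b},
\end{align*}
applied with $a = \lambda_i,\, b = i_{\underline c}$ for $i < k$, together with a direct cancellation for the $i = k$ factor (which reduces to $\tfrac{1}{n-\lambda_k}\binom{n-\lambda_k}{\lambda_k - k_{\underline c}}$), yield
\begin{align*}
f_{\lambda, \underline c} = \left(\prod_{i=1}^k \frac{1}{n-\lambda_i}\binom{n-\lambda_i}{\lambda_i - i_{\underline c}}\right)\cdot\frac{Q}{\prod_{i=1}^{k-1}(n-\lambda_i - 1)},
\end{align*}
where
\begin{align*}
Q = \sum_{S \subseteq [1, k-1]} P_k^{(S)}(\lambda_1,\ldots,\lambda_k,1_{\underline c},\ldots,k_{\underline c}) \prod_{i \in S}(\lambda_i - i_{\underline c}) \prod_{j \in [1,k-1]\setminus S}(n - 2\lambda_j + j_{\underline c}).
\end{align*}
Now Lemma~\ref{ThePolyLem}(iv), applied with $x_i = \lambda_i$ and $y_i = i_{\underline c}$ (so $x_{([1,k],+)} = n$), identifies $Q$ with $P_k(\lambda_1,\ldots,\lambda_k,1_{\underline c},\ldots,k_{\underline c})\cdot\prod_{i=1}^{k-1}(n-\lambda_i - 1)$, and (\ref{forclosed}) falls out.

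The main obstacle I anticipate is the boundary case $\lambda = (n-1)\varepsilon_i + \varepsilon_k$ for some $i \neq k$, in which $\lambda - \varepsilon_k$ itself lies in the trivial clause of the theorem; there the inductive formula must be replaced by the Kronecker delta evaluation, and the binomial manipulation requires separate treatment. I expect this finite family of special cases can be settled by direct enumeration. The other potentially delicate ingredient, the polynomial identity (\ref{TheEqn}) that drives the whole argument, is granted to us via Lemma~\ref{ThePolyLem}(iv), whose proof is relegated to the appendix.
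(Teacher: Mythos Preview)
Your approach is the paper's: induct on $|\lambda|$, invoke Lemma~\ref{recurf}, rewrite the binomial factors via the two identities you display, and close with Lemma~\ref{ThePolyLem}(iv). The only real gap is the one you already flag. The boundary family $\lambda = (n-1)\varepsilon_i + \varepsilon_k$ is not finite---there is one such $\lambda$ for every $n\ge 2$---so ``direct enumeration'' will not dispose of it, and as written your induction does not go through at those points.

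The paper's fix is clean and worth adopting. First, take $|\lambda| = 2$ as the explicit base case, verified via Lemma~\ref{ThePolyLem}(ii). Second, in the induction step for $|\lambda| > 2$, permute colors to arrange not only $k_{\underline c} > 0$ but also $\lambda_j < |\lambda| - 1$ for every $j \in [1,k-1]$; this guarantees that $\lambda - \varepsilon_k$ never lands in the degenerate clause, so the inductive formula applies to every summand. Such a permutation always exists once one assumes (\ref{nkreq}) holds (which one may, since otherwise both sides of (\ref{forclosed}) vanish by Lemma~\ref{nkreqLem}): if some $\lambda_i = |\lambda| - 1$, then (\ref{nkreq}) forces $i_{\underline c} \geq 2\lambda_i - |\lambda| = |\lambda| - 2 \geq 1$, so one takes that color $i$ as the new $k$.
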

\begin{proof}
First assume that \(|\lambda| = \lambda_i\) for some \(i \in [1,k]\). By Lemma~\ref{nkreqLem}, \(f_{\lambda,\underline c} \neq 0\) implies that \(i_{\underline c} = \lambda_i\) and \(j_{\underline c} = 0\) for \(i \neq j \in [1,k]\), and so \(\underline c = i^{\lambda_i}\). On the other hand, \(f_{\lambda,i^{\lambda_i}}=1\) as there is one \((\lambda, i^{\lambda_i})\)-forest which consists of \(\lambda_i\) \(i\)-colored roots and no other vertices. This proves the first claim.

We prove the second claim by induction on \(|\lambda|\). Write \(g_{\lambda,\underline{c}}\) for the right side of (\ref{forclosed}).
The base case is \(|\lambda|= 2\), and we may assume, since \(|\lambda|>\lambda_i\) for all \(i \in [1,k]\), that \(\lambda = \varepsilon_a + \varepsilon_b\) for some \(1 \leq a < b \leq k\). If \(\lambda, \underline{c}\) do not satisfy (\ref{nkreq}), then we have \(f_{\lambda,\underline c} = 0 = g_{\lambda,\underline c}\) by Lemma~\ref{nkreqLem}, so we may assume \(a_{\underline c}, b_{\underline{c}} \in \{0,1\}\) and \(i_{\underline c} = 0\) for all \(i \in [1,k] \backslash \{a,b\}\). It is straightforward to check from the definition of \((\lambda, \underline c)\)-forests that
\begin{align*}
f_{\lambda,\underline c} = \begin{cases}
0 & \textup{if } a_{\underline c} = b_{\underline c}= 0;\\
1 & \textup{otherwise}.
\end{cases}
\end{align*}
On the other hand, by Lemma~\ref{ThePolyLem}(ii) we have
\begin{align*}
g_{\lambda,\underline c}&= 
P_k(\delta_{1 \in \{a,b\}}, \ldots, \delta_{k \in \{a,b\}}, \delta_{1 \in \{a,b\}} 1_{\underline c}, \ldots, \delta_{k \in \{a,b\}}k_{\underline c})
\prod_{i=1}^k
\frac{1}{2-\delta_{i \in \{a,b\}}}\\
&=
2^{k-2}(a_{\underline c} + b_{\underline c}- a_{\underline c}b_{\underline c})\cdot \frac{1}{2^{k-2}}
=
a_{\underline c} + b_{\underline c}- a_{\underline c}b_{\underline c}.
\end{align*}
As \(a_{\underline c}, b_{\underline c} \in \{0,1\}\), we have that \(f_{\lambda,\underline c}= g_{\lambda,\underline c}\) in any case. This completes the proof of the base case.

Now for the induction step, fix \(\lambda, \underline{c}\) such that \(|\lambda| > \lambda_i\) for \(i \in [1,k]\), and assume that \(|\lambda|>2\). We may moreover assume that \(\lambda, \underline c\) satisfy (\ref{nkreq}), else \(f_{\lambda,\underline c}= 0 = g_{\lambda,\underline c}\) by Lemma~\ref{nkreqLem}. We make the induction assumption that the theorem statement holds for all \(\lambda', \underline{c}'\) with \(|\lambda'| < |\lambda|\). Permuting colors if necessary, we may assume that \(k_{\underline c} >0\), and \(|\lambda| -1 > \lambda_j\) for all \(j \in [1,k-1]\), since \(f_{\lambda,\underline c}\) and \(g_{\lambda,\underline c}\) are invariant under color permutation thanks to Lemma~\ref{ThePolyLem}(iii).

Thus we have:
\begin{align*}
f_{\lambda,\underline c} &= \sum_{S \subseteq [1,k-1]}
f_{\lambda- \varepsilon_k,\underline{c}(S)}\\
&=
\sum_{S \subseteq [1,k-1]}
P_k(\lambda_1, \ldots, \lambda_{k-1}, \lambda_k -1, 1_{\underline c}+\delta_{1 \in S}, \ldots, (k-1)_{\underline c} + \delta_{k-1 \in S}, k_{\underline c}-1)\\
& \hspace{60mm} \times
\prod_{i=1}^{k}  
\frac{1}{|\lambda| - \lambda_i-\delta_{i \in [1,k-1]}}
{|\lambda| - \lambda_i -\delta_{i \in [1,k-1]}\choose \lambda_i - i_{\underline{c}} - \delta_{i \in S}}\\
&=\sum_{S \subseteq [1,k-1]}
P^{(S)}_k(\lambda_1, \ldots, \lambda_k, 1_{\underline c}, \ldots, k_{\underline c})
\prod_{i=1}^k 
\frac{1}{|\lambda| - \lambda_i}
{|\lambda| - \lambda_i \choose \lambda_i - i_{\underline{c}}}\\
& \hspace{60mm} \times
\prod_{i \in S}
\frac{\lambda_i - i_{\underline{c}}}{|\lambda|- \lambda_i -1}
\prod_{i \in [1,k-1] \backslash S}
\frac{|\lambda|- 2 \lambda_i + i_{\underline c}}{|\lambda|-\lambda_i -1}\\
&=P_k(\lambda_1, \ldots, \lambda_k, 1_{\underline c}, \ldots, k_{\underline c})
\prod_{i=1}^k \frac{1}{|\lambda| - \lambda_i}{|\lambda| - \lambda_i \choose \lambda_i - i_{\underline c}},
\end{align*}
where we have used Lemma~\ref{recurf} for the first equality, the induction assumption for the second equality, rewriting for the third equality, and Lemma~\ref{ThePolyLem}(iv) for the last equality. This completes the induction step, and the proof.
\end{proof}

\subsection{Enumerating \((\lambda,c)\)-trees}
Now we briefly restrict our attention to the case of \((\lambda, c)\)-trees, wherein the formula (\ref{forclosed}) simplifies considerably:

\begin{Corollary}\label{TreeCor}
Let \(k \in \mathbb{N}\), \(\lambda \in \Lambda_k\), and \(c \in [1,k]\). If \(\lambda_c =0\) we have \(t_{\lambda, c} = 0\). Otherwise:
\begin{align}\label{treeformula}
t_{\lambda,c} & = 
|\lambda|^{k-2}{ |\lambda|- \lambda_c \choose \lambda_c -1}
\prod_{i  \neq c} \frac{1}{|\lambda|-\lambda_i} {|\lambda|- \lambda_i \choose \lambda_i}.
\end{align}
\end{Corollary}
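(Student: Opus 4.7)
The plan is to derive Corollary~\ref{TreeCor} as a direct specialization of Theorem~\ref{MainForThm}. Since a \((\lambda, c)\)-tree is by definition a \((\lambda, (c))\)-forest we have \(t_{\lambda, c} = f_{\lambda, (c)}\), and for \(\underline c = (c)\) the counts specialize to \(i_{\underline c} = \delta_{i,c}\). So the entire proof reduces to evaluating the polynomial \(P_k\) at the point \((\lambda_1, \ldots, \lambda_k, \delta_{1,c}, \ldots, \delta_{k,c})\) and then simplifying the product of binomials.

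First I would handle the degenerate cases where \(\lambda_i = |\lambda|\) for some \(i\), which triggers the Kronecker-delta branch of Theorem~\ref{MainForThm}. If \(i = c\) the theorem gives \(t_{\lambda, c} = \delta_{1,\lambda_c}\), which matches (\ref{treeformula}) by reading \(|\lambda|^{k-2} = 1\) and \(\binom{0}{\lambda_c - 1} = \delta_{\lambda_c,1}\). If instead \(i \neq c\) then \(\lambda_c = 0\) and both sides vanish. The remaining subcase \(\lambda_c = 0\) with \(|\lambda| > \lambda_j\) for every \(j\) is also immediate, since the \(i = c\) factor on the right side of (\ref{treeformula}) contains \(\binom{|\lambda|}{-1} = 0\) and one checks directly from the definition of an \(\overline{N^+}\)-injective coloring with root color \(c\) that no \((\lambda,c)\)-tree can exist when \(\lambda_c = 0\).

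For the generic case I would invoke Theorem~\ref{MainForThm} and reduce the problem to computing \(P_k\) at the specialization \(y_i = \delta_{i,c}\). The key observation is that the factor \(y_{(S, \times)} = \prod_{i \in S} \delta_{i,c}\) appearing in each summand of (\ref{ThePoly1}) vanishes unless \(S \subseteq \{c\}\), so only \(S = \emptyset\) and \(S = \{c\}\) contribute. The \(S = \emptyset\) term carries the coefficient \(\sum_i(\delta_{i \in \emptyset} - 0)x_i = 0\) and so drops out; a short calculation shows the \(S = \{c\}\) term equals \((x_{([1,k],+)} - x_c)x_{([1,k],+)}^{k-2}\), which on \(x_i = \lambda_i\) becomes \((|\lambda| - \lambda_c)|\lambda|^{k-2}\).

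Plugging this back into the formula of Theorem~\ref{MainForThm} yields
\begin{align*}
t_{\lambda, c} = (|\lambda| - \lambda_c)|\lambda|^{k-2} \cdot \frac{1}{|\lambda| - \lambda_c}\binom{|\lambda| - \lambda_c}{\lambda_c - 1} \prod_{i \neq c} \frac{1}{|\lambda| - \lambda_i}\binom{|\lambda| - \lambda_i}{\lambda_i},
\end{align*}
and the \((|\lambda| - \lambda_c)\) factors cancel to give (\ref{treeformula}). I do not expect any real obstacle here: once one spots that the double sum defining \(P_k\) collapses to the single \(S = \{c\}\) contribution, the rest is straightforward bookkeeping.
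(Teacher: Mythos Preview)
Your proposal is correct and follows essentially the same route as the paper's proof: apply Theorem~\ref{MainForThm} with \(\underline c=(c)\), observe that the specialization \(y_i=\delta_{i,c}\) collapses \(P_k\) to \((|\lambda|-\lambda_c)|\lambda|^{k-2}\), and cancel. Your treatment is slightly more explicit than the paper's in that you spell out why only \(S=\varnothing\) and \(S=\{c\}\) survive and you handle the degenerate cases \(|\lambda|=\lambda_i\) and \(\lambda_c=0\) separately, whereas the paper simply asserts the value of \(P_k(\lambda_1,\dots,\lambda_k,0,\dots,0,1)\) by inspection of~(\ref{ThePoly2}).
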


\begin{proof}
Without loss of generality, we may assume \(c=k\). From Theorem~\ref{MainForThm}, we have:
\begin{align*}
t_{\lambda,k} = f_{\lambda, (k)} =
P_k(\lambda_1, \ldots, \lambda_k, 0, \ldots, 0, 1)
\cdot \frac{1}{|\lambda| - \lambda_k}
{ |\lambda| - \lambda_k \choose \lambda_k -1}
\prod_{i  \neq k} \frac{1}{|\lambda|-\lambda_i} {|\lambda|- \lambda_i \choose \lambda_i}.
\end{align*}
Inspection of (\ref{ThePoly2}) shows that
\begin{align*}
P_k(\lambda_1, \ldots, \lambda_k, 0, \ldots, 0, 1)
&=
(|\lambda| - \lambda_k)|\lambda|^{k-2},
\end{align*}
so the result follows.
\end{proof}

\section{Fuss-Catalan numbers and \((\lambda, \underline c)\)-forests}\label{BigCatSec}
In this section we explain that the set of all injectively \(k\)-colored forests with a fixed number of vertices and fixed root color sequence are counted by Fuss-Catalan numbers. Thus grouping forests by character as in \S\ref{enumfor} yields a multi-parameter distribution of these numbers.

\subsection{A Catalan-like recurrence for \((\lambda,c)\)-trees}
For \(\lambda \in \Lambda_k\) and \(c \in [1,k]\), it will be useful to define an extension set \(\widehat{\mathcal{T}}_{\lambda, c}\) by setting \(\widehat{\mathcal{T}}_{\lambda, c} = \mathcal{T}_{\lambda, c}\) for \(|\lambda| >0\) and mandating that \(\widehat{\mathcal{T}}_{\mathbf{0},c}\) contain only the empty tree. We set \(\widehat{\mathcal{T}}_{n,c} := \bigsqcup_{\lambda \in \Lambda_k} \widehat{\mathcal{T}}_{\lambda,c}\), \(\hat{t}_{\lambda, c} := |\widehat{\mathcal{T}}_{\lambda, c}|\) and \(\hat{t}_{n, c} := |\widehat{\mathcal{T}}_{n, c}|\).
Compare the following result with (\ref{CatRecur}):
\begin{Lemma}\label{trecur}
For \(\lambda \in \Lambda_k\), we have
\begin{align*}
\hat{t}_{\lambda, c} = \delta_{\lambda,\mathbf{0}}
\;\;\;\;
(\lambda_c = 0);
\qquad
\qquad
\hat
t_{\lambda,c}=
\sum
\hat
t_{\mu^{(1)},c}
\cdots 
\hat
t_{\mu^{(c-1)},c}
\hat
t_{\mu^{(c+1)},c}
\cdots
\hat
t_{\mu^{(k)},c}
\;\;\;\;
( \lambda_c > 0),
\end{align*}
where the sum ranges over all \(\mu^{(1)}, \ldots, \mu^{(c-1)}, \mu^{(c+1)}, \ldots, \mu^{(k)} \in \Lambda_k\) such that
\begin{align*}
\sum_{i\neq c} (i,c) \cdot \mu^{(i)} = \lambda - \varepsilon_c.
\end{align*}
\end{Lemma}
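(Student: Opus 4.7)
The plan is a standard rooted-tree recursive decomposition, adapted to the colored setting and followed by a color-relabeling step. The base case $\lambda_c = 0$ is immediate: any nonempty $(\lambda, c)$-tree has root colored $c$, forcing $\lambda_c \geq 1$, so $\widehat{\mathcal{T}}_{\lambda, c}$ can contain only the empty tree, and this occurs precisely when $\lambda = \mathbf{0}$, giving $\hat{t}_{\lambda, c} = \delta_{\lambda, \mathbf{0}}$.

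For the main case $\lambda_c > 0$, I would fix $T \in \mathcal{T}_{\lambda, c}$ with root $r$, and for each $i \neq c$ let $T_i$ be the subtree of $T$ rooted at the $i$-colored out-neighbor of $r$ if there is one (at most one, by $\overline{N^+}$-injectivity at $r$), or the empty tree otherwise. Each $T_i$ inherits an injective coloring from $T$ and lies in $\widehat{\mathcal{T}}_{\nu^{(i)}, i}$, where $\nu^{(i)} := \textup{char}(T_i)$, with $\lambda - \varepsilon_c = \sum_{i \neq c} \nu^{(i)}$. The assignment $T \mapsto (T_i)_{i \neq c}$ is seen to be a bijection whose inverse attaches a fresh $c$-colored root to the given tuple together with arrows to each nonempty $T_i$'s root, giving
\[
\hat{t}_{\lambda, c} \;=\; \sum \hat{t}_{\nu^{(1)}, 1} \cdots \hat{t}_{\nu^{(c-1)}, c-1}\,\hat{t}_{\nu^{(c+1)}, c+1} \cdots \hat{t}_{\nu^{(k)}, k},
\]
summed over tuples $(\nu^{(i)})_{i \neq c}$ with $\sum_{i \neq c} \nu^{(i)} = \lambda - \varepsilon_c$.

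Finally, to bring each factor into the form $\hat{t}_{\,\cdot\,, c}$ appearing in the statement, I would invoke the evident $\mathfrak{S}_k$-symmetry of the construction: relabeling colors by any $\sigma \in \mathfrak{S}_k$ gives a bijection $\widehat{\mathcal{T}}_{\nu, i} \iso \widehat{\mathcal{T}}_{\sigma \cdot \nu, \sigma(i)}$, so $\hat{t}_{\nu, i} = \hat{t}_{\sigma \cdot \nu, \sigma(i)}$. Taking $\sigma$ to be the transposition $(i, c)$ and setting $\mu^{(i)} := (i, c) \cdot \nu^{(i)}$ (involutivity gives $\nu^{(i)} = (i,c) \cdot \mu^{(i)}$) converts the sum above into the stated form, with the constraint becoming $\sum_{i \neq c} (i, c) \cdot \mu^{(i)} = \lambda - \varepsilon_c$. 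The only genuine content to check is that global $\overline{N^+}$-injectivity on $T$ decomposes cleanly into $\overline{N^+}$-injectivity on each $T_i$ plus the distinct-color condition at $r$, and this is routine because every vertex of $T_i$ has its closed out-neighborhood entirely within $T_i$, so the only cross-subtree constraint occurs at the root.
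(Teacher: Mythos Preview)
Your proof is correct and follows essentially the same approach as the paper: decompose a $(\lambda,c)$-tree at its root into one subtree per child color $i \neq c$, establish the bijection with tuples in $\prod_{i \neq c} \widehat{\mathcal{T}}_{\nu^{(i)},i}$ subject to $\sum_{i \neq c}\nu^{(i)} = \lambda - \varepsilon_c$, and then apply the color-transposition symmetry $\hat t_{\nu^{(i)},i} = \hat t_{(i,c)\cdot\nu^{(i)},c}$ to reach the stated form. The only cosmetic difference is that the paper dispatches the $\lambda_c = 0$ case by citing Corollary~\ref{TreeCor}, whereas you argue it directly from the definition---your version is arguably cleaner there.
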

\begin{proof}
The initial values follow immediately from Corollary~\ref{TreeCor}. Assume \(\lambda \in \Lambda_k\) is such that \(\lambda_c>0\), and let \(T \in \widehat{\mathcal{T}}_{\lambda,c} =\mathcal{T}_{\lambda,c} \). Let \(i \in [1,k]\backslash \{c\}\). Recalling that \(\mathsf{r}_T(1)\) is the root of \(T\), if \((\mathsf{r}_T(1),x_i) \in A_T\) for some \(x_i \in (V_T)_i\), then set \(T^{(i)}\) to be the subtree of \(T\) with root \(\mathsf{r}_{T^{(i)}}(1) = x_i\), and let \(\ell_{T^{(i)}}\) be the injective \(k\)-coloring induced by restricting \(\ell_T\) to \(T^{(i)}\). Otherwise, set \(T^{(i)}\) to be the empty tree. In any case, we have that \(T^{(i)} \in \widehat{\mathcal{T}}_{\nu^{(i)},i}\), where \(\nu^{(i)} = \textup{char}(\ell_{T^{(i)}})\),
\begin{align*}
V_T = \{\mathsf{r}_{T}(1)\} \sqcup \bigsqcup_{i \neq c} V_{T^{(i)}};
\qquad
\textup{and}
\qquad
\lambda - \varepsilon_c = \sum_{i \neq c} \nu^{(i)}.
\end{align*}
This assignment defines a function 
\begin{align*}
\xi: \widehat{\mathcal{T}}^\lambda_c & \to
\bigsqcup \widehat{\mathcal{T}}_{\nu^{(1)},1} \times \cdots \times \widehat{\mathcal{T}}_{\nu^{(c-1)},c-1} \times\widehat{\mathcal{T}}_{\nu^{(c+1)},c+1} \times \cdots \times \widehat{\mathcal{T}}_{\nu^{(k)},k};
\\
T & \mapsto (T^{(1)}, \ldots, T^{(c-1)}, T^{(c+1)}, \ldots, T^{(k)}),
\end{align*}
where the union is over all \(\nu^{(1)}, \ldots, \nu^{(c-1)}, \nu^{(c+1)}, \ldots, \nu^{(k)} \in \Lambda_k\) such that \(\sum_{i \neq c} \nu^{(i)} = \lambda- \varepsilon_c\).

In the other direction, assume 
\(
(T^{(1)}, \ldots, T^{(c-1)}, T^{(c+1)}, \ldots, T^{(k)})
\)
is in the codomain above. We construct an associated tree \(T\) by adding a vertex \(v\) to the directed graph \(T^{(1)}\sqcup \ldots\sqcup T^{(c-1)}\sqcup T^{(c+1)}\sqcup \ldots \sqcup T^{(k)}\), along with arrows from \(v\) to \(\mathsf{r}_{T^{(i)}}(1)\) whenever \(T^{(i)}\) is nonempty. Define a \(k\)-coloring \(\ell_T\) on \(T\) by setting \(\ell_T(v) = c\) and \(\ell_T(w) = \ell_{T^{(i)}}(w)\) for all \(w \in V_{T^{(i)}}\), and set \(\mathsf{r}_T(1)= v\). It follows then that \(T \in \widehat{\mathcal{T}}_{\lambda,c}\). The assignment \((T^{(1)}, \ldots, T^{(c-1)}, T^{(c+1)}, \ldots, T^{(k)}) \mapsto T\) then defines a function which is inverse to \(\xi\) by construction.

Thus we have
\begin{align*}
\hat t_{\lambda,c} = \sum \hat t_{\nu^{(1)},1} \cdots \hat t_{\nu^{(c-1)},c-1} \hat t_{\nu^{(c+1)},c+1} \cdots \hat t_{\nu^{(k)},k},
\end{align*}
where the sum is over all \(\nu^{(1)}, \ldots, \nu^{(c-1)}, \nu^{(c+1)}, \ldots, \nu^{(k)} \in \Lambda_k\) such that \(\sum_{i \neq c} \nu^{(i)} = \lambda- \varepsilon_c\). Noting that \(\hat t_{\nu^{(i)},i} = \hat t_{(i, c)\cdot \nu^{(i)},c}\) for all \(i \neq c\) gives the result.
\end{proof}

Let \(\widehat{\tau}_c(u_1, \ldots, u_k)\) be the multivariate generating function for \(\{\hat t_{\lambda,c}\}_{\lambda \in \Lambda_k}\):
\begin{align}\label{genfun}
\widehat \tau_c(u_1, \ldots, u_k) := \sum_{\lambda \in \Lambda_k} \hat t_{\lambda,c} u_1^{\lambda_1}\cdots u_k^{\lambda_k}.
\end{align}
Lemma~\ref{trecur} implies the following:
\begin{Corollary}\label{polyrecur}
The generating function \(\widehat \tau_c(u_1, \ldots, u_k)\) satisfies the relation:
\begin{align*}
\widehat \tau_c(u_1, \ldots, u_k) = 1+ u_c \prod_{i \neq c}\widehat \tau_c(u_{(i,c)\cdot 1}, \ldots, u_{(i,c)\cdot k}).
\end{align*}
\end{Corollary}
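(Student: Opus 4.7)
The plan is to substitute the recurrence of Lemma~\ref{trecur} directly into the definition (\ref{genfun}) of $\widehat\tau_c$, and then recognize the resulting expression as a product of shifted copies of $\widehat\tau_c$. First I would split the generating function into a $\lambda_c=0$ part and a $\lambda_c>0$ part, namely
\begin{align*}
\widehat\tau_c(u_1,\ldots,u_k)
=\sum_{\lambda\in\Lambda_k,\,\lambda_c=0}\hat t_{\lambda,c}\,u_1^{\lambda_1}\cdots u_k^{\lambda_k}
+\sum_{\lambda\in\Lambda_k,\,\lambda_c>0}\hat t_{\lambda,c}\,u_1^{\lambda_1}\cdots u_k^{\lambda_k}.
\end{align*}
By the initial-value half of Lemma~\ref{trecur} the first sum collapses to the $\lambda=\mathbf 0$ term, contributing $1$. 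So the job reduces to showing the second sum equals $u_c\prod_{i\neq c}\widehat\tau_c(u_{(i,c)\cdot 1},\ldots,u_{(i,c)\cdot k})$.

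For the main step I would substitute the recursive half of Lemma~\ref{trecur} and reindex using $\lambda=\varepsilon_c+\sum_{i\neq c}(i,c)\cdot\mu^{(i)}$, which gives
\begin{align*}
\sum_{\lambda:\lambda_c>0}\hat t_{\lambda,c}\,u_1^{\lambda_1}\cdots u_k^{\lambda_k}
=u_c\sum_{(\mu^{(i)})_{i\neq c}\in\Lambda_k^{\,k-1}}\;\prod_{i\neq c}\hat t_{\mu^{(i)},c}\cdot\prod_{i\neq c}u^{(i,c)\cdot\mu^{(i)}},
\end{align*}
where I write $u^{\nu}:=u_1^{\nu_1}\cdots u_k^{\nu_k}$. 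Because the exponent tuples $(i,c)\cdot\mu^{(i)}$ for distinct $i$ live in disjoint instances of $\Lambda_k$ and add componentwise in the exponent of $u$, the sum over tuples factors as a product of independent sums over each $\mu^{(i)}$.

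The one bookkeeping step that needs care — and is the only real substance in the proof — is recognizing that for each fixed $i\neq c$,
\begin{align*}
\sum_{\mu^{(i)}\in\Lambda_k}\hat t_{\mu^{(i)},c}\,u^{(i,c)\cdot\mu^{(i)}}
=\widehat\tau_c(u_{(i,c)\cdot 1},\ldots,u_{(i,c)\cdot k}).
\end{align*}
This is immediate from unwinding the definition: since $((i,c)\cdot\mu)_j=\mu_{(i,c)\cdot j}$, one has $u^{(i,c)\cdot\mu}=\prod_{j}u_{(i,c)\cdot j}^{\mu_j}$, so substituting $u_j\mapsto u_{(i,c)\cdot j}$ into (\ref{genfun}) produces exactly the displayed sum. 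Assembling these factors and restoring the $u_c$ prefactor and the constant $1$ from the $\lambda_c=0$ part yields the asserted functional equation. The main obstacle is purely notational: keeping the variable permutation $(i,c)$ straight as it acts on indices of $u$ versus on entries of $\mu^{(i)}$; no further ideas are needed beyond Lemma~\ref{trecur}.
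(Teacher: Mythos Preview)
Your proposal is correct and is exactly the intended derivation: the paper gives no explicit proof beyond the sentence ``Lemma~\ref{trecur} implies the following,'' and what you have written is precisely the routine passage from the recurrence to the generating-function identity that this sentence encodes. Your handling of the reindexing $\lambda=\varepsilon_c+\sum_{i\neq c}(i,c)\cdot\mu^{(i)}$ and the identification $\sum_{\mu}\hat t_{\mu,c}\,u^{(i,c)\cdot\mu}=\widehat\tau_c(u_{(i,c)\cdot 1},\ldots,u_{(i,c)\cdot k})$ are both correct.
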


\subsection{Fuss-Catalan numbers and \((\lambda,c)\)-trees}
Recall the definition of the Fuss-Catalan numbers \(A_n(p,r)\) from \S\ref{FussCatSec}.

\begin{Lemma}\label{areCat}
For \(c \in [1,k]\), \(n \in \mathbb{N}\), we have:
\begin{align*}
t_{n,c} = \frac{1}{kn-n +1}{kn - n + 1 \choose n} = A_n(k-1,1).
\end{align*}
\end{Lemma}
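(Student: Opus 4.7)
My plan is to deduce this from the generating-function recurrence of Corollary~\ref{polyrecur} by specializing all variables to a single variable $u$. Writing $f(u) := \widehat\tau_c(u,\ldots,u)$, the specialization $u_1 = \cdots = u_k = u$ makes every transposition $(i,c)$ act trivially on the argument tuple, so the recurrence collapses to
\begin{align*}
f(u) \;=\; 1 + u\, f(u)^{k-1}.
\end{align*}
On the other hand, by definition (\ref{genfun}) we have $f(u) = \sum_{\lambda \in \Lambda_k} \hat t_{\lambda,c}\, u^{|\lambda|} = \sum_{n \geq 0} \hat t_{n,c}\, u^n$.

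Recall from \S\ref{FussCatSec} that the generating function $B_{k-1,1}(x) = \sum_{n \geq 0} A_n(k-1,1)x^n$ is the unique power series satisfying $B_{k-1,1}(x) = x\,B_{k-1,1}(x)^{k-1} + 1$ with constant term $1$. Since $f(u)$ has constant term $\hat t_{\mathbf 0,c} = 1$ and satisfies the same functional equation, we conclude $f(u) = B_{k-1,1}(u)$, i.e.\ $\hat t_{n,c} = A_n(k-1,1)$ for every $n \geq 0$.

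For $n \geq 1$ the extension set coincides with $\mathcal{T}_{n,c}$, so $t_{n,c} = \hat t_{n,c} = A_n(k-1,1) = \frac{1}{(k-1)n+1}\binom{(k-1)n+1}{n} = \frac{1}{kn-n+1}\binom{kn-n+1}{n}$, as claimed. The one potential pitfall is keeping track of the $n=0$ case (the empty tree is counted by $\hat t_{0,c}$ but not by $t_{0,c}$), but since the Fuss--Catalan recursion and the matching initial condition $\hat t_{0,c}=1=A_0(k-1,1)$ determine $f(u)$ uniquely, this causes no trouble. No heavy calculation is required; the entire argument is driven by uniqueness of solutions to the functional equation $f = 1 + u f^{k-1}$.
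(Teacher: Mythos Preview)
Your proof is correct and follows essentially the same approach as the paper: specialize the multivariate generating function $\widehat\tau_c$ at $u_1=\cdots=u_k=u$, use Corollary~\ref{polyrecur} to obtain the functional equation $f(u)=1+uf(u)^{k-1}$, and identify $f$ with $B_{k-1,1}$. Your explicit mention of uniqueness of the power-series solution with constant term $1$ and the handling of the $n=0$ case are slightly more detailed than the paper's version, but the argument is the same.
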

\begin{proof}
Writing \(q(x) = \widehat \tau_c(x,\ldots,x)\), we have that
\begin{align*}
q(x) = \sum_{\lambda \in \Lambda_k} \hat t_{\lambda,c} x^{|\lambda|} = \sum_{n \geq 0} \bigg( \sum_{\lambda \in \Lambda_k^n} \hat t_{\lambda,c} \bigg) x^n = \sum_{n \geq 0} \hat t_{n,c} x^n,
\end{align*}
so \(q(x)\) is the generating function for the sequence \(\{\hat t_{n,c}\}_{n \geq 0}\).
By Corollary~\ref{polyrecur}, \(q(x)\) satisfies
\begin{align*}
q(x) = 1+xq(x)^{k-1}.
\end{align*}
Hence, recalling \S\ref{FussCatSec}, we have that \(q(x)\) is the generating function for \(\{A_n(k-1,1) \}_{n \geq 0}\), and so \(t_{n,c} = \hat t_{n,c} = A_n(k-1,1)\) for all \(n \in \mathbb{N}\), as desired.
\end{proof}

\subsection{A distribution of \(A_n(p,1)\)}\label{Ap1distSec}
We utilize the results of the previous section to describe a \(p\)-parameter distribution \(\{\xi_n(\nu)\}_{\nu \in \Lambda_p(<n)}\) of the Fuss-Catalan numbers \(A_n(p,1)\). For \(n,p \in \mathbb{N}\), \(\nu \in \Lambda_p(<n)\), define
\begin{align*}
\xi_n(\nu) = 
n^{p-1}
{  |\nu|\choose n - |\nu| -1}
\frac{{n-\nu_1 \choose \nu_1} \cdots {n - \nu_p \choose \nu_p}}{(n-\nu_1) \cdots (n- \nu_p)}.
\end{align*}
\begin{Proposition}\label{Ap1dist}
For all \(n,p \in \mathbb{N}\), we have
\begin{align*}
A_n(p,1) = \sum_{\nu \in \Lambda_p(<n)} \xi_n(\nu).
\end{align*}
\end{Proposition}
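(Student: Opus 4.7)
The plan is to derive this distribution directly from the tree enumeration machinery already established. The starting observation is that Lemma~\ref{areCat} identifies \(A_n(p,1)\) with the total count \(t_{n,k}\) of injectively \(k\)-colored rooted trees on \(n\) vertices with root color \(c\), where \(k=p+1\) and \(c\) can be taken to be \(k\) itself. Since \(\mathcal{T}_{n,k}\) is partitioned by character, we have \(t_{n,k} = \sum_{\lambda \in \Lambda_{k}(n)} t_{\lambda,k}\), and each summand is computed by Corollary~\ref{TreeCor}.

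The next step is a change of indexing. For \(\lambda = (\lambda_1,\ldots,\lambda_{p+1}) \in \Lambda_{p+1}(n)\), the root color condition forces \(\lambda_{p+1} \geq 1\) in order that \(t_{\lambda,k} \neq 0\). Setting \(\nu = (\lambda_1, \ldots, \lambda_p) \in \Lambda_p\) and \(\lambda_{p+1} = n - |\nu|\), the constraint \(\lambda_{p+1} \geq 1\) translates exactly to \(\nu \in \Lambda_p(<n)\). I would then substitute into the closed form of Corollary~\ref{TreeCor}:
\begin{align*}
t_{\lambda,p+1} = n^{(p+1)-2}\binom{n-\lambda_{p+1}}{\lambda_{p+1}-1}\prod_{i=1}^{p}\frac{1}{n-\lambda_i}\binom{n-\lambda_i}{\lambda_i},
\end{align*}
and note that \(n - \lambda_{p+1} = |\nu|\) while \(\lambda_{p+1} - 1 = n - |\nu| - 1\), so the first binomial becomes \(\binom{|\nu|}{n-|\nu|-1}\), and the remaining product is precisely the one appearing in \(\xi_n(\nu)\). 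Therefore \(t_{\lambda,p+1} = \xi_n(\nu)\).

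Combining these two steps yields
\begin{align*}
A_n(p,1) = t_{n,p+1} = \sum_{\substack{\lambda \in \Lambda_{p+1}(n)\\ \lambda_{p+1} \geq 1}} t_{\lambda,p+1} = \sum_{\nu \in \Lambda_p(<n)} \xi_n(\nu),
\end{align*}
which is the claim. There is no real obstacle here: the proposition is essentially a repackaging of Corollary~\ref{TreeCor} and Lemma~\ref{areCat} under the natural bijection between \((\lambda,p+1)\)-trees on \(n\) vertices and elements of \(\Lambda_p(<n)\) (via \(\nu \mapsto (\nu, n-|\nu|)\)). The only subtlety worth flagging is the boundary behavior: the cases where some \(\nu_i = 0\) or \(|\nu| = n-1\) correspond to degenerate trees that are still properly counted by the formula, and one should check that the trivial characters excluded by the \(\lambda_c = 0\) clause in Corollary~\ref{TreeCor} exactly match the exclusion \(\lambda_{p+1} \geq 1\) made above.
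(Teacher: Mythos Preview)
Your proposal is correct and follows essentially the same approach as the paper: invoke Lemma~\ref{areCat} to write \(A_n(p,1)=\sum_{\lambda\in\Lambda_{p+1}(n)} t_{\lambda,p+1}\), reindex by \(\nu=(\lambda_1,\ldots,\lambda_p)\), and use Corollary~\ref{TreeCor} both to discard the \(\lambda_{p+1}=0\) terms and to identify the surviving summands with \(\xi_n(\nu)\). The paper's own proof is slightly terser but structurally identical.
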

\begin{proof}
By Lemma~\ref{areCat}, we have
\(
A_n(p,1) = \sum_{\lambda \in \Lambda_{p+1}(n)} t_{\lambda, p+1}
\). Taking \(\nu = (\lambda_1, \ldots, \lambda_p)\), we have that \(\lambda_{p+1} = n - |\nu|\), and thus \(t_{\lambda, p+1} = \xi_n(\nu)\) if \(|\nu| < n\), and zero otherwise by Corollary~\ref{TreeCor}. The result immediately follows.
\end{proof}

\begin{Remark}
See Figure~\ref{fig:CatEx} for some explicit \(\xi_n(\nu)\) values in the classical Catalan number case \(p=2\). Taken together, the southeast halves of each table may be viewed as layers of tetrahedral distribution of the Catalan sequence.
\end{Remark}

\begin{Remark}\label{RemConnex}
In the case \(p = 2\), the numbers \(\xi_n(\nu)\) refine/generalize a number of known integer sequences. We consider some examples below, with links to their instances in the OEIS \cite{OEIS}.
\begin{enumerate}
\item As noted earlier, \(\sum_{\nu \in \Lambda_2(<n)} \xi_n(\nu)\) (sum over all entries in the \(n\)th table in Figure~\ref{fig:CatEx}) is the \(n\)th Catalan number. \href{http://oeis.org/A000108}{[A000108]}
\item For \(h \in \ZZ_{\geq 0}\), the value \(\sum_{t = 0}^{n-1}\xi_n((t, h))\) (sum of entries in the \(h\)th row or column in the \(n\)th table in Figure~\ref{fig:CatEx}) is the number of {\em ordered} trees on \(n\) edges containing \(h\) nodes adjacent to a leaf \cite{Callan}.  \href{http://oeis.org/A108759}{[A108759]}
\item For odd \(n\) and \(h \in \ZZ_{\geq 0}\), the value \(\xi_n((h, (n-1)/2 - h))\) (\(h\)th entry on the main antidiagonal in the \(n\)th table in Figure~\ref{fig:CatEx}) is the number of `fighting fish' with \((n+1)/2 - h\) left lower free and \(h+1\) right lower free edges with a marked tail \cite{fish}. \href{http://oeis.org/A278880}{[A278880]}
\item For even \(n\), reading along the main antidiagonals and bottom rows in Figure~\ref{fig:CatEx} yields the sequences \href{http://oeis.org/A323324}{[A323324]} and \href{http://oeis.org/A278881}{[A278881]} respectively. 
\end{enumerate}
\end{Remark}

\begin{figure}[h]
\small
\begin{tabular}{ccccccc}
\(n=1\)& 
\(n=2\) &
\(n=3\)& \(n=4\) & \(n=5\) \\
\begin{tabular}{cc|}
\hline
\multicolumn{1}{|c|}{\diagbox[width=0.75cm, height=0.5cm, innerrightsep=1pt, innerleftsep=1pt]{\(\scriptstyle\nu_1\)}{\({}^{\nu_2}\)}}& 0 \\
\hline
\multicolumn{1}{|c|}{0} & 1 \\
\hline
\\
\\
\end{tabular}
&
\begin{tabular}{ccc|}
\hline
\multicolumn{1}{|c|}{\diagbox[width=0.75cm, height=0.5cm, innerrightsep=1pt, innerleftsep=1pt]{\(\scriptstyle\nu_1\)}{\({}^{\nu_2}\)}} & 0 & 1 \\
\hline
\multicolumn{1}{|c|}{0} & . & 1 \\
\multicolumn{1}{|c|}{1} & 1 & .   \\
\hline
\\
\end{tabular}
&
\begin{tabular}{ccc|}
\hline
\multicolumn{1}{|c|}{
\diagbox[width=0.75cm, height=0.5cm, innerrightsep=1pt, innerleftsep=1pt]{\(\scriptstyle\nu_1\)}{\({}^{\nu_2}\)}} & 0 & 1 \\
\hline
\multicolumn{1}{|c|}{0 }& . & 1 \\
\multicolumn{1}{|c|}{1} & 1 & 3   \\
\hline
\\
\end{tabular}
&
\begin{tabular}{|c|ccc|}
\hline
\diagbox[width=0.75cm, height=0.5cm, innerrightsep=1pt, innerleftsep=1pt]{\(\scriptstyle\nu_1\)}{\({}^{\nu_2}\)} & 0 & 1&2 \\
\hline
0 & . & . & 1 \\
1 & . & 8 & 2  \\
2& 1 & 2 &.\\
\hline
\end{tabular}
&
\begin{tabular}{|c|ccc|}
\hline
\diagbox[width=0.75cm, height=0.5cm, innerrightsep=1pt, innerleftsep=1pt]{\(\scriptstyle\nu_1\)}{\({}^{\nu_2}\)} & 0 & 1&2 \\
\hline
0 & . & . & 1 \\
1 & . & 5 & 15  \\
2& 1 & 15 &5\\
\hline
\end{tabular}
\vspace{3mm}
\end{tabular}
\begin{tabular}{ccccccc}
\(n=6\)& 
\(n=7\) &
\(n=8\) \\
\begin{tabular}{ccccc|}
\hline
\multicolumn{1}{|c|}{\diagbox[width=0.75cm, height=0.5cm, innerrightsep=1pt, innerleftsep=1pt]{\(\scriptstyle\nu_1\)}{\({}^{\nu_2}\)}} & 0 & 1&2&3 \\
\hline
\multicolumn{1}{|c|}{0} & . & . & . & 1\\
\multicolumn{1}{|c|}{1} & . & . & 27 & 8 \\
\multicolumn{1}{|c|}{2}& . & 27 &54 &3\\
\multicolumn{1}{|c|}{3}& 1 & 8 & 3 & . \\
\hline
\\
\end{tabular}
&
\begin{tabular}{ccccc|}
\hline
\multicolumn{1}{|c|}{\diagbox[width=0.75cm, height=0.5cm, innerrightsep=1pt, innerleftsep=1pt]{\(\scriptstyle\nu_1\)}{\({}^{\nu_2}\)} }& 0 & 1&2&3 \\
\hline
\multicolumn{1}{|c|}{0} & . & . & . & 1\\
\multicolumn{1}{|c|}{1} & . & . & 14 & 42 \\
\multicolumn{1}{|c|}{2}& . & 14 &168 &70\\
\multicolumn{1}{|c|}{3}& 1 & 42 & 70 & 7 \\
\hline
\\
\end{tabular}
&
\begin{tabular}{|c|ccccc|}
\hline
\diagbox[width=0.75cm, height=0.5cm, innerrightsep=1pt, innerleftsep=1pt]{\(\scriptstyle\nu_1\)}{\({}^{\nu_2}\)} & 0 & 1&2&3& 4 \\
\hline
0 & . & . & . & .&1\\
1 & . & . & . & 64 &20\\
2& . & . &200 &400&30\\
3& . & 64 & 400 &192 &4\\
4& 1 & 20 & 30 & 4 &.\\
\hline
\end{tabular}
\end{tabular}
\caption{The values \(\xi_n(\nu) = t_{( \nu_1, \nu_2, n- |\nu|), 3}\) for \(n \in [1,8]\) and \(\nu \in \Lambda_2(<n)\). The sum of entries in each table is the Catalan number \(C_n = A_n(2,1)\). 
}
\label{fig:CatEx}       
\end{figure}

\subsection{Enumerating injectively \(k\)-colored rooted forests}
Now we enumerate the set of injectively \(k\)-colored rooted forests with \(n\) vertices and a given root color sequence \(\underline c\) of length \(m\). While, for \(\lambda \in \Lambda_k(n)\), the number \(f_{\lambda, \underline c}\) depends on \(\underline c\), the total number \(f_{n, \underline c}\) does not:

\begin{Theorem}\label{AllFor}
For all \(n,k,m \in \mathbb{N}\), \(\underline{c} \in [1,k]^m\), we have
\begin{align*}
f_{n,\underline c}
=
\frac{m}{n}{kn-n \choose n-m}
=
A_{n-m}(k-1,km-m).
\end{align*}
\end{Theorem}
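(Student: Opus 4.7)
The plan is to reduce the enumeration of $(\lambda,\underline c)$-forests to the tree enumeration of Corollary~\ref{TreeCor} via a decomposition argument, and then carry out a routine generating-function manipulation using the functional equation in \S\ref{FussCatSec}.

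First, I would observe that in any $m$-rooted forest $F$, each vertex $v\in V_F$ is reachable by a unique oriented path from a unique root $\mathsf{r}_F(i)$, so the vertex set $V_F$ partitions as $V_F=V^{(1)}\sqcup\cdots\sqcup V^{(m)}$, where $V^{(i)}$ is the set of vertices reachable from $\mathsf{r}_F(i)$. Because every arrow $(v,w)\in A_F$ lies in some single block $V^{(i)}\times V^{(i)}$, closed out-neighborhoods $\overline{N^+}(v)$ are contained in whichever block contains $v$. Consequently, the injectivity condition on $\ell_F$ holds globally on $F$ iff it holds on each of the restricted rooted trees $(V^{(i)},A_F\cap(V^{(i)}\times V^{(i)}))$ rooted at $\mathsf{r}_F(i)$. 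Combined with the fact that an isomorphism of $(\lambda,\underline c)$-forests fixes the root ordering and therefore preserves the blocks $V^{(i)}$, this gives a bijection between $\mathcal{F}_{n,\underline c}$ and tuples $(T_1,\dots,T_m)$ with $T_i\in\mathcal{T}_{n_i,c_i}$ and $n_1+\cdots+n_m=n$ (each $n_i\geq 1$). Hence
\begin{align*}
f_{n,\underline c}=\sum_{\substack{n_1+\cdots+n_m=n\\ n_i\geq 1}} t_{n_1,c_1}\cdots t_{n_m,c_m}.
\end{align*}

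Next I would pass to generating functions. By Lemma~\ref{areCat}, $t_{n,c}=A_n(k-1,1)$ for every $c\in[1,k]$ and every $n\in\mathbb{N}$, so $t_{n,c}$ does not depend on $c$; in particular, $f_{n,\underline c}$ depends only on $n$ and $m$. Setting $B(x):=B_{k-1,1}(x)=\sum_{n\geq 0}A_n(k-1,1)x^n$, the preceding display yields
\begin{align*}
f_{n,\underline c}=[x^n]\bigl(B(x)-1\bigr)^m,
\end{align*}
since $t_{0,c}=0$ removes the constant term $A_0(k-1,1)=1$ from $B(x)$.

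The final step uses the functional equation $B(x)=1+xB(x)^{k-1}$ (\S\ref{FussCatSec}) to rewrite $B(x)-1=xB(x)^{k-1}$, followed by the identity $B_{p,r}(x)=B_{p,1}(x)^r$, giving
\begin{align*}
f_{n,\underline c}=[x^n]\,x^m B(x)^{m(k-1)}=[x^{n-m}]\,B_{k-1,m(k-1)}(x)=A_{n-m}(k-1,m(k-1)).
\end{align*}
Unwinding the definition of the Fuss--Catalan number with $p=k-1$ and $r=m(k-1)=km-m$, the denominator $(n-m)(k-1)+m(k-1)=n(k-1)$ cancels the factor $(k-1)$ in the numerator, leaving $A_{n-m}(k-1,km-m)=\tfrac{m}{n}\binom{kn-n}{n-m}$, as claimed.

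The only nontrivial step is the bijective decomposition in the first paragraph; once that is established the remainder is a direct computation using ingredients already in the paper.
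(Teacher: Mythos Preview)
Your proof is correct and begins the same way as the paper's---the tree-decomposition bijection and the passage to the generating function $(B(x)-1)^m$ are identical to the paper's argument. Where you diverge is in extracting the coefficient of $x^n$: the paper expands $(B-1)^m$ by the binomial theorem as $\sum_{w=0}^m(-1)^{m-w}\binom{m}{w}B_{k-1,w}(x)$, reads off $f_{n,\underline c}$ as an alternating sum of Fuss--Catalan numbers $A_n(k-1,w)$, and then collapses this sum via the identity $\sum_r(-1)^r\binom{t}{r}\binom{s+r}{v}=(-1)^t\binom{s}{v-t}$ from \cite{Gould}. You instead invoke the functional equation $B-1=xB^{k-1}$ to rewrite $(B-1)^m=x^mB^{m(k-1)}=x^mB_{k-1,m(k-1)}(x)$ and read off $A_{n-m}(k-1,m(k-1))$ immediately. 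Your route is shorter and more conceptual, bypassing the auxiliary binomial identity entirely; the paper's route has the minor advantage of not re-using the functional equation (which was already used inside Lemma~\ref{areCat}), but this is not a real saving.
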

\begin{proof}
We have a natural bijection given by decomposing \(F \in \mathcal{F}_{n,\underline c}\) into its tree components:
\begin{align*}
\mathcal{F}_{n,\underline c} \xrightarrow{\sim} \bigsqcup_{\nu \in \Lambda_m(n)} \mathcal{T}_{\nu_1,c_1}\times \cdots \times \mathcal{T}_{\nu_m,c_m}. 
\end{align*}
Note that \(t_{n', c} = \hat t_{n',c} - \delta_{n',0}\) for all \(n' \in \mathbb{Z}_{\geq 0}\), so by Lemma~\ref{trecur} we have
\begin{align*}
f_{n,\underline c} &= \sum_{\nu \in \Lambda_m(n)}t_{\nu_1,c_1}\cdots t_{\nu_m,c_m}
=\sum_{\nu \in \Lambda_m(n)}(A_{\nu_1}(k-1,1) - \delta_{\nu_1, 0}) \cdots (A_{\nu_m}(k-1,1)- \delta_{\nu_m, 0}).
\end{align*}
Writing \(\rho_{\underline c}(x) = \sum_{n \geq 0} t_{n, \underline c}x^n\) for the generating function of \((f_{n, \underline c})_{n\geq 0}\), we thus have 
\begin{align*}
\rho_{\underline c}(x) &= (B_{k-1,1}(x)-1)^m = \sum_{w=0}^m (-1)^{m-w}{m \choose w} B_{k-1,1}(x)^w
\\&=\sum_{w = 0}^m (-1)^{m-w} {m \choose w} B_{k-1,w}(x)
=\sum_{u \geq 0}\sum_{w=0}^m(-1)^{m-w}{m \choose w}A_u(k-1,w)x^u.
\end{align*}
Thus
\begin{align*}
f_{n, \underline c} &= 
\sum_{w = 0}^m (-1)^{m-w} {m \choose w} A_n(k-1,w)
=
\sum_{w=0}^m \frac{(-1)^{m-w}w}{w+uk -u}{m \choose w} {w + uk - u \choose u}\\
&=(-1)^m\left[
\sum (-1)^w {m \choose w}{w + uk-u \choose u}
-
(uk-u)
\sum
\frac{(-1)^w}{w+uk-u}
{m \choose w}{w +uk-u \choose u}\right]\\
&=(-1)^m\left[
\sum (-1)^w {m \choose w}{w + uk-u \choose u}
-
(k-1)
\sum
(-1)^w{m \choose w}{w +uk-u-1 \choose u-1}\right]\\
&={uk-u \choose u-m}
-
(k-1){uk-u-1 \choose u-1-m}=
\frac{m}{n} {kn -n \choose n-m},
\end{align*}
where the fifth equality above follows by dual applications of the standard binomial identity:
\begin{align*}
\sum_{r=0}^t (-1)^r {t \choose r}{ s+r \choose v} = (-1)^t {s \choose v - t},
\end{align*}
see for instance \cite[Vol. 4, (10.15)]{Gould}.
\end{proof}

\subsection{Distributions of Fuss-Catalan numbers}\label{FCdistSec}
For a partition \(\rho \in \Lambda_{p+1}^+(\ell)\) and \(\mu \in \Lambda_{p}(\leq n)\) we set \(\alpha_\mu(\rho) := \delta_{n,0}\) if \(\rho_1 = \ell\), \( \mu_1=n\). Otherwise we set:
\begin{align*}
\alpha_n(\rho, \mu) &= P_{p+1}(\rho_1 + \mu_1, \ldots, \rho_{p} + \mu_{p}, \rho_{p+1} + n - |\mu|, \rho_1, \ldots, \rho_{p+1})\\
& \hspace{10mm}\times
\frac{1}{\ell +|\mu| - \rho_{p+1}}{\ell+ |\mu| - \rho_{p+1} \choose n - |\mu|}
\prod_{i = 1}^p
\frac{1}{\ell + n - \rho_i - \mu_i }{\ell + n - \rho_i - \mu_i \choose \mu_i}.
\end{align*}

As shown in the next proposition, and exemplified in Figure~\ref{fig:6types}, for each choice of partition \(\rho \in \Lambda^+_{p+1}(\ell)\), the numbers \(\{\alpha_n(\rho,\mu) \mid \mu \in \Lambda_p(\leq n)\}\) define an associated \(p\)-parameter distribution of the Fuss-Catalan number \(A_n(p, p \ell)\).
\begin{Proposition}\label{refineBigFuss}
Assume \(n,p,\ell \in \mathbb{Z}_{\geq 0}\), \(\rho \in \Lambda_{p+1}^+(\ell)\). Then we have:
\begin{align*}
A_n(p,p\ell) = \sum_{
\mu \in \Lambda_{p}(\leq n)
}
\alpha_\mu(\rho, \mu).
\end{align*}
\end{Proposition}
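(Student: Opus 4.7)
The plan is to recognize Proposition~\ref{refineBigFuss} as a character-refinement of the total count $f_{n+\ell,\underline c} = A_n(p,p\ell)$ supplied by Theorem~\ref{AllFor}; the numbers $\alpha_n(\rho,\mu)$ will turn out to be precisely the individual summands $f_{\lambda,\underline c}$ computed via Theorem~\ref{MainForThm}, after an appropriate reindexing.

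First I would fix any root color sequence $\underline c \in [1,p+1]^\ell$ with $i_{\underline c} = \rho_i$ for every $i$; such a sequence exists because $|\rho| = \ell$. Applying Theorem~\ref{AllFor} with $k = p+1$, $m = \ell$, and vertex count $n+\ell$ yields $f_{n+\ell,\underline c} = A_n(p,p\ell)$. On the other hand, decomposing by character gives $f_{n+\ell,\underline c} = \sum_{\lambda \in \Lambda_{p+1}(n+\ell)} f_{\lambda,\underline c}$, and by Lemma~\ref{nkreqLem} only those $\lambda$ with $\lambda_i \geq \rho_i$ for every $i$ can contribute. Setting $\mu_i := \lambda_i - \rho_i$ for $i \in [1,p]$ then forces $\lambda_{p+1} = \rho_{p+1} + n - |\mu|$, and the remaining constraint $\lambda_{p+1} \geq \rho_{p+1}$ is exactly $|\mu| \leq n$; so $\lambda \mapsto \mu$ sets up a bijection between the potentially nonzero summands and the index set $\Lambda_p(\leq n)$ of the proposition.

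It then remains to match the summands term-by-term. In the generic case $|\lambda| \neq \lambda_i$ for every $i$, this is a direct substitution of $\lambda_i = \rho_i + \mu_i$ (for $i \leq p$), $\lambda_{p+1} = \rho_{p+1} + n - |\mu|$, and $i_{\underline c} = \rho_i$ into the formula of Theorem~\ref{MainForThm}: the $P_{p+1}$ factor matches by construction, and the denominator/binomial products $\frac{1}{|\lambda|-\lambda_i}\binom{|\lambda|-\lambda_i}{\lambda_i - i_{\underline c}}$ rearrange without incident into the product defining $\alpha_n(\rho,\mu)$. The one degenerate possibility $|\lambda| = \lambda_i$, since $\rho$ is a partition, is forced to the situation $\rho = (\ell,0,\ldots,0)$ and $\mu = (n,0,\ldots,0)$ with $i = 1$; Theorem~\ref{MainForThm} then returns $\delta_{\ell, n+\ell}\delta_{\ell, \ell} = \delta_{n,0}$, agreeing with the stipulated value of $\alpha_n(\rho,\mu)$ in this case.

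I do not anticipate any substantive obstacle here: the heavy lifting has already been done by Theorems~\ref{AllFor} and~\ref{MainForThm}, so the proof reduces to the bijective reindexing $\lambda \leftrightarrow \mu$ above together with the routine algebraic matching and the single boundary-case check described in the last paragraph.
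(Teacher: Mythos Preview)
Your proposal is correct and follows essentially the same route as the paper's own proof: fix $\underline c$ with $i_{\underline c}=\rho_i$, invoke Theorem~\ref{AllFor} to get $A_n(p,p\ell)=f_{n+\ell,\underline c}$, split by character, reindex $\lambda\leftrightarrow\mu$ via $\lambda_i=\rho_i+\mu_i$, and identify each summand with $\alpha_n(\rho,\mu)$ through Theorem~\ref{MainForThm}. If anything, you supply more justification than the paper does (the appeal to Lemma~\ref{nkreqLem} to discard $\lambda$ with some $\lambda_i<\rho_i$, and the boundary-case check), though your claim that the degenerate case forces $i=1$ is slightly overstated when $\ell=0$.
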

\begin{proof}
Set \(\underline{c} = 1^{\rho_1} \cdots (p+1)^{\rho_{p+1}} \in [1,p+1]^{\ell}\).
Then, by Theorem~\ref{AllFor} we have
\begin{align*}
A_{n}(p,p\ell) &=
t_{n+\ell, \underline c}
= 
\sum_{\lambda \in \Lambda_{p+1}(n+\ell)} t_{\lambda, \underline c}
= \sum_{\eta \in \Lambda_{p+1}(n)} t_{\eta + \rho, \underline c} 
= \sum_{\mu \in \Lambda_p(\leq n)} t_{\hat \mu + \rho, \underline c},
\end{align*}
where \(\Lambda_{p+1}(n) \ni \hat \mu = (\mu_1, \ldots, \mu_p, n - |\mu|)\) for \(\mu \in \Lambda_p( \leq n)\). The result follows from Theorem~\ref{MainForThm}.
\end{proof}

\begin{figure}[h]
\small
\begin{tabular}{ccc}
\(\rho = (3,0,0)\)& 
\(\rho = (2,1,0)\) &
\(\rho = (1,1,1)\)\\
\begin{tabular}{|c|ccccc|}
\hline
\diagbox[width=0.75cm, height=0.5cm, innerrightsep=1pt, innerleftsep=1pt]{\(\scriptstyle\mu_1\)}{\({}^{\mu_2}\)} & 0 & 1&2&3 & 4\\
\hline
0 & . & . & 81 & 300 & 81\\
1 & . & 135 & 1350 & 1350 & 135\\
2& 18 & 540 &1458 &540 & 18\\
3& 10 & 81 & 81 & 10 & .\\
4& . & . & . & . & .\\
\hline
\end{tabular}
&
\begin{tabular}{|c|ccccc|}
\hline
\diagbox[width=0.75cm, height=0.5cm, innerrightsep=1pt, innerleftsep=1pt]{\(\scriptstyle\mu_1\)}{\({}^{\mu_2}\)} & 0 & 1&2&3 & 4\\
\hline
0 & . & . & 55 & 140 & 15\\
1 & . & 145 & 1150 & 786 & 31\\
2& 34 & 860 &1830 &460 & 6\\
3& 50 & 339 & 265 & 22 & .\\
4& . & . & . & . & .\\
\hline
\end{tabular}
&
\begin{tabular}{|c|ccccc|}
\hline
\diagbox[width=0.75cm, height=0.5cm, innerrightsep=1pt, innerleftsep=1pt]{\(\scriptstyle\mu_1\)}{\({}^{\mu_2}\)} & 0 & 1&2&3 & 4\\
\hline
0 & . & . & 10 & 64 & 10\\
1 & . & 32 & 640 & 640 & 32\\
2& 10 & 640 &2000 &640 & 10\\
3& 64 & 640 & 640 & 64 & .\\
4 & 10 & 32 & 10 & . & . \\
\hline
\end{tabular}
\end{tabular}
\caption{The values \(\alpha_6(\rho, \mu)\), for \(\rho \in \Lambda^+_3(3)\), \(\mu \in \Lambda_2(\leq 6)\). The sum of entries in each table is \(A_6(2,6)=6188\).}
\label{fig:6types}       
\end{figure}

\section{3-colorings of triangulations}\label{TriSec}

In this section we consider a correspondence between injectively 3-colored rooted trees and triangulations of convex polygons. 

\begin{Definition} A {\em triangulation} \(\tau\) of an \(n\)-gon \(P\) is a decomposition of \(P\) into \(n-2\) triangles with non-intersecting interiors.
\end{Definition}

Every triangulation corresponds to a choice of \(n-3\) non-crossing interior line segments, or {\em diagonals} between vertices of \(P\). It is well-known that the number of distinct triangulations of a  convex \(n\)-gon is given by the Catalan number \(C_{n-2} = A_{n-2}(2,1)\). We will write \(\textup{Tri}_n\) for the set of triangulations of the regular convex \(n\)-gon. We treat \(\tau \in \textup{Tri}_n\) as a graph whose vertices are the vertices of \(P\) and whose edges are the sides of \(P\) and diagonals in \(\tau\).
It is also well-known (see for instance \cite[\S3.3]{Dimacs}) that every triangulation \(\tau \in\textup{Tri}_n \) possesses a proper 3-coloring which is unique up to permutation of colors. 
This gives rise to the following definition:

\begin{Definition}
For \(\tau \in \textup{Tri}_n\) and partition \(\lambda \in \Lambda_3^+(n)\) we say \(\tau\) has {\em type} \(\lambda\) provided that \(\tau\) has a proper 3-coloring wherein \(\lambda_1\) vertices are colored \(1\), \(\lambda_2\) vertices are colored \(2\), and \(\lambda_3\) vertices are colored \(3\). We write \(\textup{type}(\tau) = \lambda\), noting that such \(\lambda\) is unique in \(\Lambda_3^+(n)\).
\end{Definition}

For \(\lambda \in \Lambda_3^+(n)\), we write \(\textup{Tri}_n^\lambda := \{ \tau \in \textup{Tri}_n \mid \textup{type}(\tau) = \lambda\} \) for the set of type-\(\lambda\) triangulations of the \(n\)-gon. See Figure~\ref{fig:hextri} for an example. Our goal in the next subsection is to describe the cardinality of the set \(\textup{Tri}^\lambda_n\).

\begin{Remark}
The 3-colorings of triangulation graphs have played an important role in a number of combinatorial contexts. One such is in the proof of Chv\'atal's Theorem \cite{chvatal} which states that every simple polygon with \(n\) sides may be `guarded' by at most \(\lfloor n/3 \rfloor\) guards placed around the polygon. In \cite{fisk}, Fisk gave an elegant proof of this fact by triangulating the polygon, 3-coloring the triangulation, then placing guards at the vertices of the color which appears least often. Thus Fisk's algorithm will produce a valid guarding with exactly \(\lambda_3 \leq \lfloor n/3 \rfloor\) guards when the the triangulation is of type \(\lambda\).
\end{Remark}

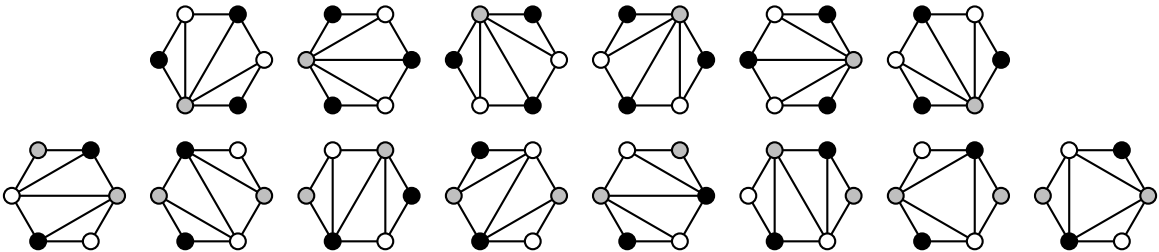
\begin{figure}[h]
\begin{align*}
{}
\hackcenter{
     \begin{tikzpicture}[scale = 0.7]
    \draw[thick, ]  (-1,0)--(-.5,.866)--(.5,.866)--(1,0)--(.5,-.866)--(-.5,-.866)--(-1,0);
    \draw[thick] (-.5,-.866)--(-.5,.866);
      \draw[thick] (-.5,-.866)--(.5,.866);
        \draw[thick] (-.5,-.866)--(1,0);
         \draw[thick, black, fill= black] (.5,-.866) circle (1.5mm);
          \draw[thick, black, fill= lightgray] (-.5,-.866) circle (1.5mm);
           \draw[thick, black, fill= black] (.5,.866) circle (1.5mm);
            \draw[thick, black, fill= black] (-1,0) circle (1.5mm);
             \draw[thick, black, fill= white] (-.5,.866) circle (1.5mm);
              \draw[thick, black, fill= white] (1,0) circle (1.5mm);
    \end{tikzpicture}
    \;\;\;
 \begin{tikzpicture}[scale = 0.7]
    \draw[thick, ]  (-1,0)--(-.5,.866)--(.5,.866)--(1,0)--(.5,-.866)--(-.5,-.866)--(-1,0)--(1/2,.866); \draw[thick ](-1,0)--(1,0); \draw[thick] (-1,0)--(.5,-.866);
    \draw[thick, black, fill=lightgray] (-1,0) circle (1.5mm);
    \draw[thick, black, fill= black] (1,0)  circle (1.5mm);
    \draw[thick, black, fill= white] (.5,.866)  circle (1.5mm);
    \draw[thick, black, fill= white] (.5,-.866)  circle (1.5mm);
     \draw[thick, black, fill= black] (-.5,.866)  circle (1.5mm);
      \draw[thick, black, fill= black] (-.5,-.866)  circle (1.5mm);
    \end{tikzpicture}
       \;\;\;
     \begin{tikzpicture}[scale = 0.7]
    \draw[thick, ]  (-1,0)--(-.5,.866)--(.5,.866)--(1,0)--(.5,-.866)--(-.5,-.866)--(-1,0);
    \draw[thick] (-.5,.866)--(-.5,-.866);
    \draw[thick] (-.5,.866)--(.5,-.866);
    \draw[thick] (-.5,.866)--(1,0);
 \draw[thick, black, fill= black] (.5,-.866) circle (1.5mm);
  \draw[thick, black, fill= lightgray] (-.5,.866) circle (1.5mm);
   \draw[thick, black, fill= black] (.5,.866) circle (1.5mm);
    \draw[thick, black, fill= black] (-1,0) circle (1.5mm);
     \draw[thick, black, fill= white] (1,0) circle (1.5mm);
      \draw[thick, black, fill=  white] (-.5,-.866) circle (1.5mm);
    \end{tikzpicture}
\;\;\;
    \begin{tikzpicture}[scale = 0.7]
    \draw[thick, ]  (-1,0)--(-.5,.866)--(.5,.866)--(1,0)--(.5,-.866)--(-.5,-.866)--(-1,0); \draw[thick ] (.5,.866)--(-1,0); \draw[thick ] (.5,.866)--(-.5,-.866);
    \draw[thick ] (.5,.866)--(.5,-.866);
     \draw[thick, black, fill= lightgray] (.5,.866)  circle (1.5mm);
      \draw[thick, black, fill= black] (1,0)  circle (1.5mm);
       \draw[thick, black, fill= black] (-.5,-.866) 
       circle (1.5mm);
        \draw[thick, black, fill= black] (-.5,.866)  circle (1.5mm);
         \draw[thick, black, fill= white] (-1,0)  circle (1.5mm);
          \draw[thick, black, fill= white] (.5,-.866)  circle (1.5mm);
    \end{tikzpicture}
    \;\;\;
     \begin{tikzpicture}[scale = 0.7]
    \draw[thick, ]  (-1,0)--(-.5,.866)--(.5,.866)--(1,0)--(.5,-.866)--(-.5,-.866)--(-1,0);
    \draw[thick](1,0)--(-.5,.866);
    \draw[thick](1,0)--(-1,0);
    \draw[thick](1,0)--(-.5,-.866);
    \draw[thick, black, fill= lightgray] (1,0) circle (1.5mm);
    \draw[thick, black, fill= black] (.5,-.866) circle (1.5mm);
    \draw[thick, black, fill= black] (-1,0) circle (1.5mm);
    \draw[thick, black, fill= black] (.5,.866) circle (1.5mm);
    \draw[thick, black, fill= white] (-.5,.866) circle (1.5mm);
    \draw[thick, black, fill= white] (-.5,-.866) circle (1.5mm);
    \end{tikzpicture}
    \;\;\;
     \begin{tikzpicture}[scale = 0.7]
    \draw[thick, ]  (-1,0)--(-.5,.866)--(.5,.866)--(1,0)--(.5,-.866)--(-.5,-.866)--(-1,0);
    \draw[thick] (.5,-.866)--(-1,0);
    \draw[thick] (.5,-.866)--(-.5,.866);
    \draw[thick] (.5,-.866)--(.5,.866);
    \draw[thick, black, fill= lightgray] (.5,-.866) circle (1.5mm);
    \draw[thick, black, fill= black] (-.5,-.866) circle (1.5mm);
    \draw[thick, black, fill= black] (-.5,.866) circle (1.5mm);
    \draw[thick, black, fill= black] (1,0) circle (1.5mm);
    \draw[thick, black, fill= white] (-1,0) circle (1.5mm);
    \draw[thick, black, fill= white] (.5,.866) circle (1.5mm);
    \end{tikzpicture}
}
\end{align*}
\begin{align*}
   \begin{tikzpicture}[scale = 0.7]
    \draw[thick, ]  (-1,0)--(-.5,.866)--(.5,.866)--(1,0)--(.5,-.866)--(-.5,-.866)--(-1,0); \draw[thick ](.5,.866)--(-1,0)--(1,0)--(-.5,-.866);
     \draw[thick, black, fill= lightgray] (1,0)  circle (1.5mm);
      \draw[thick, black, fill= lightgray] (-.5,.866)  circle (1.5mm);
       \draw[thick, black, fill= white] (-1,0)  circle (1.5mm);
       \draw[thick, black, fill= white] (.5,-.866)  circle (1.5mm);
       \draw[thick, black, fill= black] (.5,.866)  circle (1.5mm);
        \draw[thick, black, fill= black] (-.5,-.866)  circle (1.5mm);
    \end{tikzpicture}
    \;\;\;
     \begin{tikzpicture}[scale = 0.7]
    \draw[thick, ]  (-1,0)--(-.5,.866)--(.5,.866)--(1,0)--(.5,-.866)--(-.5,-.866)--(-1,0)--(.5,-.866)--(-.5,.866)--(1,0);
    \draw[thick, black, fill= lightgray] (-1,0) circle (1.5mm);
    \draw[thick, black, fill= black] (-.5,-.866) circle (1.5mm);
     \draw[thick, black, fill= black] (-.5,.866) circle (1.5mm);
      \draw[thick, black, fill= white] (.5,-.866) circle (1.5mm);
        \draw[thick, black, fill= white] (.5,.866) circle (1.5mm);
            \draw[thick, black, fill= lightgray] (1,0) circle (1.5mm);
    \end{tikzpicture}
    \;\;\;
     \begin{tikzpicture}[scale = 0.7]
    \draw[thick, ]  (-1,0)--(-.5,.866)--(.5,.866)--(1,0)--(.5,-.866)--(-.5,-.866)--(-1,0);
    \draw[thick] (-.5,.866)--(-.5,-.866)--(.5,.866)--(.5,-.866);
      \draw[thick, black, fill= black] (-.5,-.866) circle (1.5mm);
        \draw[thick, black, fill= black] (1,0) circle (1.5mm);
          \draw[thick, black, fill= white] (-.5,.866) circle (1.5mm);
            \draw[thick, black, fill= white] (.5,-.866) circle (1.5mm);
             \draw[thick, black, fill= lightgray] (.5,.866) circle (1.5mm);
              \draw[thick, black, fill= lightgray] (-1,0) circle (1.5mm);
             \end{tikzpicture}
             \;\;\;
               \begin{tikzpicture}[scale = 0.7]
    \draw[thick, ]  (-1,0)--(-.5,.866)--(.5,.866)--(1,0)--(.5,-.866)--(-.5,-.866)--(-1,0); 
    \draw[thick ](.5,.866)--(-1,0);
   \draw[thick ](1,0)--(-.5,-.866);
   \draw[thick ](.5,.866)--(-.5,-.866);
     \draw[thick, black, fill= lightgray] (1,0)  circle (1.5mm);
      \draw[thick, black, fill= black] (-.5,.866)  circle (1.5mm);
       \draw[thick, black, fill= lightgray] (-1,0)  circle (1.5mm);
       \draw[thick, black, fill= white] (.5,-.866)  circle (1.5mm);
       \draw[thick, black, fill= white] (.5,.866)  circle (1.5mm);
        \draw[thick, black, fill= black] (-.5,-.866)  circle (1.5mm);
    \end{tikzpicture}
    \;\;\;
     \begin{tikzpicture}[scale = 0.7]
    \draw[thick, ]  (-1,0)--(-.5,.866)--(.5,.866)--(1,0)--(.5,-.866)--(-.5,-.866)--(-1,0)--(.5,-.866);
    \draw[thick, ](-.5,.866)--(1,0);
     \draw[thick, ](-1,0)--(1,0);
    \draw[thick, black, fill= lightgray] (-1,0) circle (1.5mm);
    \draw[thick, black, fill= black] (-.5,-.866) circle (1.5mm);
     \draw[thick, black, fill= white] (-.5,.866) circle (1.5mm);
      \draw[thick, black, fill= white] (.5,-.866) circle (1.5mm);
        \draw[thick, black, fill= lightgray] (.5,.866) circle (1.5mm);
            \draw[thick, black, fill= black] (1,0) circle (1.5mm);
    \end{tikzpicture}
    \;\;\;
     \begin{tikzpicture}[scale = 0.7]
    \draw[thick, ]  (-1,0)--(-.5,.866)--(.5,.866)--(1,0)--(.5,-.866)--(-.5,-.866)--(-1,0);
    \draw[thick] (-.5,-.866)--(-.5,.866)--(.5,-.866)--(.5,.866);
        \draw[thick, black, fill= white] (-1,0) circle (1.5mm);
            \draw[thick, black, fill= white] (.5,-.866) circle (1.5mm);
            \draw[thick, black, fill= black] (-.5,-.866) circle (1.5mm);
              \draw[thick, black, fill= lightgray] (-.5,.866) circle (1.5mm);
                \draw[thick, black, fill= black] (.5,.866) circle (1.5mm);
                  \draw[thick, black, fill= lightgray] (1,0) circle (1.5mm);
    \end{tikzpicture}
               \;\;\;
    \begin{tikzpicture}[scale = 0.7]
    \draw[thick, ]  (-1,0)--(-.5,.866)--(.5,.866)--(1,0)--(.5,-.866)--(-.5,-.866)--(-1,0); \draw[thick] (.5,.866)--(-1,0)--(.5,-.866)--(.5,.866);
     \draw[thick, black, fill= black] (.5,.866)  circle (1.5mm);
      \draw[thick, black, fill= black] (-.5,-.866) 
      circle (1.5mm);
       \draw[thick, black, fill= white] (.5,-.866)  circle (1.5mm);
          \draw[thick, black, fill= white] (-.5,.866)  circle (1.5mm);
          \draw[thick, black, fill= lightgray] (1,0) circle (1.5mm);
             \draw[thick, black, fill= lightgray] (-1,0) circle (1.5mm);
    \end{tikzpicture}
      \;\;\;
         \begin{tikzpicture}[scale = 0.7]
    \draw[thick, ]  (-1,0)--(-.5,.866)--(.5,.866)--(1,0)--(.5,-.866)--(-.5,-.866)--(-1,0);
    \draw[thick] (-.5,-.866)--(-.5,.866)--(1,0)--(-.5,-.866);
        \draw[thick, black, fill= lightgray] (-1,0) circle (1.5mm);
         \draw[thick, black, fill= black] (-.5,-.866) circle (1.5mm);
         \draw[thick, black, fill= white] (-.5,.866) circle (1.5mm);
   \draw[thick, black, fill= lightgray] (1,0) circle (1.5mm);
     \draw[thick, black, fill= white] (.5,-.866) circle (1.5mm);
     \draw[thick, black, fill= black] (.5,.866) circle (1.5mm);
    \end{tikzpicture}
\end{align*}
\caption{The 14 triangulations of the regular convex hexagon, sorted according to type. The top row is the set \(\textup{Tri}_6^{(3,2,1)}\), the bottom row is the set \(\textup{Tri}_6^{(2,2,2)}\).
}
\label{fig:hextri}       
\end{figure}

\subsection{Triangulations and injectively 3-colored rooted trees}\label{tritreesec}

We describe now how to associate an injectively 3-colored rooted tree to a triangulation.
Fix an orientation of the convex \(n\)-gon so that the bottom side \(s\) is on the \(x\)-axis. Color the left vertex of \(s\) with \(1\), and the right vertex of \(s\) with \(2\). Then for \(\tau \in \textup{Tri}_n\), this assignment extends to a unique proper 3-coloring of \(\tau\).

We define a graph \(T\) associated to \(\tau\) as follows. The vertices of \(T\) will be placed at the center of every triangle in \(\tau\). We add an edge between two vertices whenever their corresponding triangles share an edge. It is clear that \(T\) as constructed is a tree---this is often called the `dual tree' of the triangulation \(\tau\). 
Now we add direction; the root \(\mathsf{r}_T(1)\) of \(T\) we designate to be the vertex belonging to the `base' triangle which contains the edge \(s\), and we orient all edges in \(T\) so that \(T\) is a rooted tree.

Now we define a 3-coloring \(\ell_T\) on \(T\) as follows. Set \(\ell_T(\mathsf{r}_T(1)) = 3\). For every other vertex \(w \in T\), there is a unique arrow \(a \in T\) with target \(w\). This arrow crosses a diagonal in \(\tau\) with endpoints colored \(c_1, c_2 \in \{1,2, 3\}\). Then we set \(\ell_T(w)\) to be the unique element of \(\{1,2, 3\} \backslash \{c_1, c_2\}\).
That \( \ell_T\) is an injective 3-coloring for \(T\) quickly follows from the fact that \(\tau\) is properly 3-colored. 

Therefore the assignment \(\chi(\tau) = T\) gives a well-defined function 
\begin{align}\label{chimap}
\chi:\textup{Tri}_n \to \mathcal{T}_{n-2,3}
\end{align}
See Figure~\ref{fig:chiex} for an example of \(\chi\) in action.
It is straightforward to see that \(\chi\) is injective, so by Theorem~\ref{areCat} we have that \(\chi\) is a bijection.

\begin{figure}[h]
\begin{align*}
\hackcenter{}
\hackcenter{
\begin{tikzpicture}[scale=1.4]
        \draw[thick, dotted, gray] (-0.7,0)--(1.7,0);
    \draw[thick, black] (0,0)--(1,0)--(1.707,.707)--(1.707,1.707)--(1,2.41)--(0,2.41)--(-.707,1.707)--(-.707,.707)--(0,0)--(0,2.41)--(1,0);
    \draw[thick, black]
    (1,0)--(1.707,1.707)--(0,2.41)--(-.707,.707);
    \draw[thick, black, fill = black ] (0,0) circle (.5 mm);
        \draw[thick, black, fill = black] (1,0) circle (.5mm);
 \draw[thick, black, fill = black] (0,2.41) circle (.5mm);
  \draw[thick, black, fill = black] (1.707,1.707) circle (.5mm);
  \draw[thick, black, fill = black] (1.707,.707) circle (.5mm);
  \draw[thick, black, fill = black] (1,2.41) circle (.5mm);
  \draw[thick, black, fill = black] (-.707,1.707) circle (.5mm);
  \draw[thick, black, fill = black] (-.707,.707) circle (.5mm);
    \end{tikzpicture}
    }
 \;\;\;
    \rightsquigarrow
\;\;\;
    \hackcenter{
    \begin{tikzpicture}[scale=1.4]
        \draw[thick, dotted, red] (-0.7,0)--(1.7,0);
    \draw[thick, red] (0,0)--(1,0)--(1.707,.707)--(1.707,1.707)--(1,2.41)--(0,2.41)--(-.707,1.707)--(-.707,.707)--(0,0)--(0,2.41)--(1,0);
    \draw[thick, red]
    (1,0)--(1.707,1.707)--(0,2.41)--(-.707,.707);
    \draw[thick, red, fill=white] (0,0) circle (1 mm);
        \draw[thick, red, fill=white] (1,0) circle (1.mm);
 \draw[thick, red, fill=white] (0,2.41) circle (1.mm);
  \draw[thick, red, fill=white] (1.707,1.707) circle (1.mm);
  \draw[thick, red, fill=white] (1.707,.707) circle (1.mm);
  \draw[thick, red, fill=white] (1,2.41) circle (1.mm);
  \draw[thick, red, fill=white] (-.707,1.707) circle (1.mm);
  \draw[thick, red, fill=white] (-.707,.707) circle (1mm);
         \node[red] at (0,0){ $\scriptstyle{\mathbf{1}}$};
           \node[red] at (1,0){ $\scriptstyle{\mathbf{2}}$};
             \node[red] at (0,2.41){ $\scriptstyle{\mathbf{3}}$};
               \node[red] at (1.707,1.707){ $\scriptstyle{\mathbf{1}}$};
                 \node[red] at (1.707,.707){ $\scriptstyle{\mathbf{3}}$};
                        \node[red] at (1,2.41){ $\scriptstyle{\mathbf{2}}$};
                         \node[red] at (-.707,1.707){ $\scriptstyle{\mathbf{1}}$};
                             \node[red] at (-.707,.707) { $\scriptstyle{\mathbf{2}}$};
                     \draw[thick, black, fill=white] (.3,.8) circle (1.3mm);
  \draw[thick,->, shorten >=0.16cm] (.3,.8)--(1,1.3);
    \draw[thick,->, shorten >=0.16cm] (1,1.3)--(1.5,.8);
  \draw[thick,->, shorten >=0.16cm](1,1.3)--(1,2.15);
  \draw[thick,->, shorten >=0.16cm](.3,.8)--(-.3,1);
  \draw[thick,->, shorten >=0.16cm](-.3,1)--(-.5,1.6);
  \draw[thick, black, fill=violet!30!red!30] (.3,.8) circle (1mm);
   \draw[thick, black, fill=pink!50!yellow!80] (1,1.3) circle (1mm);
    \draw[thick, black, fill=violet!30!red!30] (1.5,.8) circle (1mm);
     \draw[thick, black, fill=cyan!30] (1,2.15) circle (1mm);
      \draw[thick, black, fill=cyan!30] (-.3,1) circle (1mm);
       \draw[thick, black, fill=pink!50!yellow!80] (-.5,1.6) circle (1mm);
       \node[] at (.3,.8){ $\scriptstyle{\mathbf{3}}$};
        \node[] at (1,1.3){ $\scriptstyle{\mathbf{1}}$};
         \node[] at (1.5,.8){ $\scriptstyle{\mathbf{3}}$};
          \node at (1,2.15){ $\scriptstyle{\mathbf{2}}$};
           \node at (-.3,1){ $\scriptstyle{\mathbf{2}}$};
            \node[] at (-.5,1.6){ $\scriptstyle{\mathbf{1}}$};
    \end{tikzpicture}
    }
   \;\;\;
    \rightsquigarrow
\;\;\;
    \hackcenter{
    \begin{tikzpicture}[scale=0.42]
 \draw[thick,->, shorten >=0.16cm]  (1,0)--(0,1.5);
  \draw[thick,->, shorten >=0.16cm]  (1,0)--(2, 1.5);
    \draw[thick,->, shorten >=0.16cm]  (0,1.5)--(-1,3);
      \draw[thick,->, shorten >=0.16cm]  (2, 1.5)--(1,3);
        \draw[thick,->, shorten >=0.16cm]  (2, 1.5)--(3,3);
        \draw[ thick, black] (1,0) circle (13pt);
     \draw[ thick, black, fill =violet!30!red!30] (1,0) circle (10pt);
         \node[] at (1,0){ $\scriptstyle{\mathbf{3}}$};  
              \draw[ thick, black, fill =cyan!30] (0,1.5) circle (10pt);
         \node[] at  (0,1.5){ $\scriptstyle{\mathbf{2}}$};  
                 \draw[ thick, black, fill =pink!50!yellow!80] (-1,3) circle (10pt);
         \node[] at  (-1,3){ $\scriptstyle{\mathbf{1}}$};  
           \draw[ thick, black, fill =pink!50!yellow!80] (2,1.5) circle (10pt);
         \node[] at  (2,1.5){ $\scriptstyle{\mathbf{1}}$};  
                    \draw[ thick, black, fill =cyan!30] (1,3) circle (10pt);
         \node[] at  (1,3){ $\scriptstyle{\mathbf{2}}$}; 
                 \draw[ thick, black, fill =violet!30!red!30] (3,3) circle (10pt);
         \node[] at  (3,3){ $\scriptstyle{\mathbf{3}}$}; 
         %
\end{tikzpicture}
}
\end{align*}
\caption{The map \(\chi\) gives a bijection between triangulations of the regular convex \(n\)-gon and injectively 3-colored rooted trees with \(n-2\) vertices and root colored by 3. 
On the left, a triangulation \(\tau \in \textup{Tri}_8\), on the right, its image \(\chi(\tau) \in \mathcal{T}_{6,3}\).
}
\label{fig:chiex}    
\end{figure}
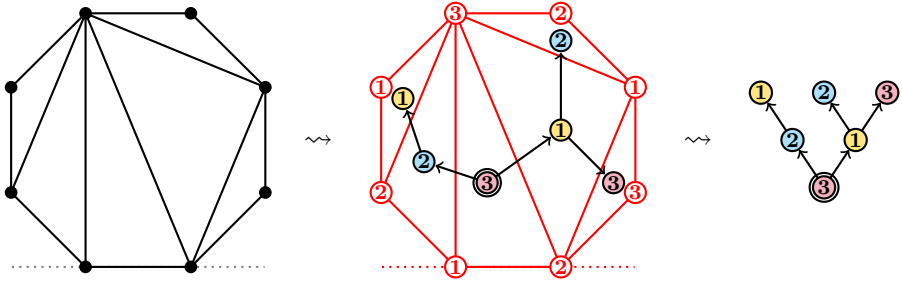

We can say more about this map. After drawing \(T\), each triangle \(\Delta\) in \(\tau\) save the base triangle has one edge \(e\) which is crossed by an incoming arrow. If the vertex \(w \in T\) in the interior of \(\Delta\) is colored \(i\) in \(T\), then the vertex of \(\Delta\) opposite \(e\) must be colored \(i\). Thus for every non-root \(w \in T\) colored \(i\), there must be a distinct associated vertex in \(\tau\) colored \(i\). We have not accounted for the colors of any of the vertices of the base triangle in \(\tau\), wherein one of each color must appear. Therefore \(\tau\), colored according to the convention above, has character \(\lambda \in \Lambda_3\) if and only if \(\chi(\tau) \in \mathcal{T}_{\lambda - \varepsilon_1 - \varepsilon_2,3}\). The reader may again compare this result with Figure~\ref{fig:chiex}.

\subsection{Enumerating triangulations by type} We now give a formula for the number of triangulations of the regular convex \(n\)-gon of a given type. See Figure~\ref{fig:tridata} for these values for \(3 \leq n \leq 16\).
\begin{Theorem}\label{TriThm}
Let \(n \geq 3\) and \(\lambda \in \Lambda_3^+(n)\). If \(\lambda_3 = 0\) then \(|\textup{Tri}^\lambda_n| = 0\). Otherwise, recalling \S\ref{CombSec}, we have
\begin{align*}
\big| \textup{Tri}_n^\lambda \big| =\frac{\textup{o}(\lambda)n(n-2)}{3(n-\lambda_1 - 1)(n-\lambda_2-1)(n-\lambda_3-1)}
{ n- \lambda_1 - 1 \choose \lambda_1 -1}
{ n- \lambda_2 - 1 \choose \lambda_2 -1}
{ n- \lambda_3 - 1 \choose \lambda_3 -1}.
\end{align*}
\end{Theorem}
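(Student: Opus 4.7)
The plan is to transfer the count via the bijection \(\chi\) of \S\ref{tritreesec} into a sum of \((\nu,c)\)-tree counts and then symmetrize with an \(\mathfrak{S}_3\)-orbit sum. First I would dispose of the edge case: if \(\lambda_3 = 0\), then (since \(\lambda\) is a partition) some \(\lambda_i = 0\), but every triangulation of an \(n\)-gon (\(n\geq 3\)) contains a triangle whose proper 3-coloring must use all three colors, so no triangulation has type \(\lambda\) and the count is \(0\).

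Assume now \(\lambda_3 \geq 1\), hence all \(\lambda_i \geq 1\). The convention of \S\ref{tritreesec} equips each \(\tau \in \textup{Tri}_n^\lambda\) with a unique proper 3-coloring whose character \(\mu\) lies in \(\mathcal{O}(\lambda)\) and has every \(\mu_i \geq 1\); conversely, the discussion after \eqref{chimap} shows that \(\chi\) restricts to a bijection between triangulations of character \(\mu\) and \(\mathcal{T}_{\mu - \varepsilon_1 - \varepsilon_2,3}\). Partitioning \(\textup{Tri}_n^\lambda\) according to \(\mu\) therefore yields
\begin{align*}
|\textup{Tri}_n^\lambda| = \sum_{\mu \in \mathcal{O}(\lambda)} t_{\mu - \varepsilon_1 - \varepsilon_2,3},
\end{align*}
which I would then expand via Corollary~\ref{TreeCor} (with \(k=c=3\), \(\nu = \mu - \varepsilon_1 - \varepsilon_2\), \(|\nu| = n-2\), \(\nu_3 = \mu_3\), \(\nu_i = \mu_i - 1\) for \(i=1,2\)).

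The expression from Corollary~\ref{TreeCor} is asymmetric in \(\mu_1,\mu_2,\mu_3\) because the root-color factor appears as \(\binom{n-2-\mu_3}{\mu_3-1}\), different in shape from the other two factors \(\binom{n-\mu_i-1}{\mu_i-1}/(n-\mu_i-1)\). The crux of the argument, which I expect to be the main subtlety, is the routine but essential factorial identity
\begin{align*}
\binom{n-2-\mu_3}{\mu_3-1} = \frac{n - 2\mu_3}{n - \mu_3 - 1}\binom{n-\mu_3-1}{\mu_3-1},
\end{align*}
which restores symmetry and recasts the summand as \((n-2)(n-2\mu_3)\prod_{i=1}^3 \binom{n-\lambda_i-1}{\lambda_i-1}/(n-\lambda_i-1)\) (the product is symmetric in \(\mu\) and hence equal for every \(\mu \in \mathcal{O}(\lambda)\)). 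Only the factor \(n - 2\mu_3\) still depends on the choice of \(\mu\). A standard orbit-averaging argument finishes things off: since \(\mathfrak{S}_3\) permutes the three coordinate slots, each slot visits each \(\lambda_i\) equally often across \(\mathcal{O}(\lambda)\), giving \(\sum_{\mu \in \mathcal{O}(\lambda)}\mu_3 = \textup{o}(\lambda)(\lambda_1+\lambda_2+\lambda_3)/3 = \textup{o}(\lambda)n/3\), so \(\sum_{\mu \in \mathcal{O}(\lambda)}(n - 2\mu_3) = \textup{o}(\lambda)n/3\). Multiplying through then yields the stated formula.
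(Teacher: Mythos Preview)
Your proof is correct and follows essentially the same route as the paper: both use the bijection \(\chi\) to write \(|\textup{Tri}_n^\lambda| = \sum_{\mu \in \mathcal{O}(\lambda)} t_{\mu-\varepsilon_1-\varepsilon_2,3}\), apply Corollary~\ref{TreeCor}, and invoke the identity \(\binom{n-\mu_3-2}{\mu_3-1} = \frac{n-2\mu_3}{n-\mu_3-1}\binom{n-\mu_3-1}{\mu_3-1}\) to symmetrize the summand. The one genuine difference is in the final step: the paper splits into three cases according to the multiplicities in \(\lambda\) (all parts distinct, exactly two equal, all equal) and evaluates \(\sum_{\mu}(n-2\mu_3)\) separately in each, whereas your orbit-averaging observation \(\sum_{\mu \in \mathcal{O}(\lambda)}\mu_3 = \textup{o}(\lambda)n/3\) handles all cases uniformly. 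Your version is a bit cleaner; the paper's case analysis is more explicit but otherwise the two arguments are the same.
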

\begin{proof}
Every triangle in a triangulation must contain one of each color vertex, so there is no triangulation of type \((\lambda_1, \lambda_2, 0)\), establishing the first claim. Now assume \(\lambda \in \Lambda_n^+\) and \(\lambda_3 > 0\). 

By the discussion of \ref{tritreesec} and Corollary~\ref{TreeCor}, we have that:
\begin{align}
\big| \textup{Tri}_n^\lambda \big| 
&= \sum_{\mu \in \mathcal{O}(\lambda)}
\big| \mathcal{T}_{\mu - \varepsilon_1 - \varepsilon_2, 3} \big|
= \sum_{\mu \in \mathcal{O}(\lambda)}
t_{\mu - \varepsilon_1 - \varepsilon_2, 3}\nonumber
\\
&=\sum_{\mu \in \mathcal{O}(\lambda)}
\frac{n-2}{(n - \lambda_1 - 1)(n - \lambda_2 -1)}{n - \lambda_1 - 1 \choose \lambda_1 - 1} {n - \lambda_2 - 1 \choose \lambda_2 -1}{ n -  \lambda_3 -2 \choose \lambda_3 -1}.\label{trisum}
\end{align}

\noindent{\em Case 1.} First assume that all components in \(\lambda\) are distinct: \(\lambda_1 > \lambda_2 > \lambda_3\). Then we have that (\ref{trisum}) is equal to:
\begin{align*}
&2\left[
\frac{n-2}{(n - \lambda_1 - 1)(n - \lambda_2 -1)}{n - \lambda_1 - 1 \choose \lambda_1 - 1} {n - \lambda_2 - 1 \choose \lambda_2 -1}{ n -  \lambda_3 -2 \choose \lambda_3 -1}
\right]\\
&\hspace{10mm}+
2\left[
\frac{n-2}{(n - \lambda_1 - 1)(n - \lambda_3 -1)}{n - \lambda_1 - 1 \choose \lambda_1 - 1} {n - \lambda_3 - 1 \choose \lambda_3 -1}{ n -  \lambda_2 -2 \choose \lambda_2 -1}
\right]\\
&\hspace{20mm}+
2\left[
\frac{n-2}{(n - \lambda_2 - 1)(n - \lambda_3 -1)}{n - \lambda_2 - 1 \choose \lambda_2 - 1} {n - \lambda_3 - 1 \choose \lambda_3 -1}{ n -  \lambda_1 -2 \choose \lambda_1 -1}
\right]\\
&=
\frac{2(n-2)[(n - 2\lambda_1) + (n-2 \lambda_2) + (n - 2\lambda_3)] }{(n - \lambda_1 - 1)(n - \lambda_2 -1)(n- \lambda_3 -1)}{n - \lambda_1 - 1 \choose \lambda_1 - 1} {n - \lambda_2 - 1 \choose \lambda_2 -1}{ n -  \lambda_3 -2 \choose \lambda_3 -1} \\
&=
\frac{2n(n-2)}{(n - \lambda_1 - 1)(n - \lambda_2 -1)(n- \lambda_3 -1)}{n - \lambda_1 - 1 \choose \lambda_1 - 1} {n - \lambda_2 - 1 \choose \lambda_2 -1}{ n -  \lambda_3 -2 \choose \lambda_3 -1},
\end{align*}
and the result follows since in this situation \(o(\lambda) =6\).\\

\noindent{\em Case 2.} Next assume that exactly two components in \(\lambda\) are equal; without loss of generality, we assume \(\lambda_1 = \lambda_2 > \lambda_3\). Then we have that (\ref{trisum}) is equal to:
\begin{align*}
&
\frac{n-2}{(n - \lambda_1 - 1)(n - \lambda_1 -1)}{n - \lambda_1 - 1 \choose \lambda_1 - 1} {n - \lambda_1 - 1 \choose \lambda_1 -1}{ n -  \lambda_3 -2 \choose \lambda_3 -1}
\\
&\hspace{10mm}+
2\left[
\frac{n-2}{(n - \lambda_1 - 1)(n - \lambda_3 -1)}{n - \lambda_1 - 1 \choose \lambda_1 - 1} {n - \lambda_3 - 1 \choose \lambda_3 -1}{ n -  \lambda_1 -2 \choose \lambda_1 -1}
\right]\\
&=\frac{(n-2)[(n- 2 \lambda_3) + 2(n-2 \lambda_1)]}{(n- \lambda_1 -1 )^2(n - \lambda_3 -1)}
{n - \lambda_1 - 1 \choose \lambda_1 -1}^2 {n- \lambda_3 - 1 \choose \lambda_3 -1}\\
&=\frac{n(n-2)}{(n- \lambda_1 -1 )^2(n - \lambda_3 -1)}
{n - \lambda_1 - 1 \choose \lambda_1 -1}^2 {n- \lambda_3 - 1 \choose \lambda_3 -1},
\end{align*}
and the result follows since in this situation \(o(\lambda) =3\).\\

\noindent{\em Case 3.} Next assume that all components in \(\lambda\) are equal: \(\lambda_1 = \lambda_2 = \lambda_3\). Noting then that \(\lambda_1 = n/3\), we have that (\ref{trisum}) is equal to:
\begin{align*}
\frac{n-2}{(n- \lambda_1 - 1)^2}{n - \lambda_1 - 1 \choose \lambda_1 -1}^2 {n - \lambda_1 - 2 \choose \lambda_1 - 1}
&=
\frac{(n-2)(n - 2 \lambda_1)}{(n - \lambda_1 - 1)^3}{ n- \lambda_1 - 1 \choose \lambda_1 -1}^3\\
&=
\frac{n(n-2)}{3(n - \lambda_1 - 1)^3}{ n- \lambda_1 - 1 \choose \lambda_1 -1}^3,
\end{align*}
and the result follows since in this situation \(o(\lambda) = 1\). 
Thus, in any case, the theorem holds.
\end{proof}

\begin{figure}[h]
\begin{align*}
\begin{array}{cc|}
\hline
\multicolumn{1}{|c|}{\lambda \in \Lambda_3^+(n)} &\big|\textup{Tri}^\lambda_n\big| \\
\hline
\multicolumn{1}{|c|}{(1,1,1)} & 1\\
\hline
\multicolumn{1}{|c|}{(2,1,1)} & 2\\
\hline
\multicolumn{1}{|c|}{(2,2,1)} & 5\\
\hline
\multicolumn{1}{|c|}{(3,2,1)} & 6\\
\multicolumn{1}{|c|}{(2,2,2)} & 8\\
\hline
\multicolumn{1}{|c|}{(3,3,1)} & 7\\
\multicolumn{1}{|c|}{(3,2,2)} & 35\\
\hline
\multicolumn{1}{|c|}{(4,3,1)} & 8\\
\multicolumn{1}{|c|}{(4,2,2)} & 16\\
\multicolumn{1}{|c|}{(3,3,2)} & 108\\
\hline
\multicolumn{1}{|c|}{(4,4,1)} & 9\\
\multicolumn{1}{|c|}{(4,3,2)} & 252 \\
\multicolumn{1}{|c|}{(3,3,3)} & 168 \\
\hline
\\
\\
\\
\end{array}
\;\;
\begin{array}{cc|}
\hline
\multicolumn{1}{|c|}{\lambda \in \Lambda_3^+(n)} & \big|\textup{Tri}^\lambda_n\big|\\
\hline
\multicolumn{1}{|c|}{(5,4,1)}
& 10\\
\multicolumn{1}{|c|}{(5,3,2) }& 100\\
\multicolumn{1}{|c|}{(4,4,2)} & 320\\
\multicolumn{1}{|c|}{(4,3,3) }& 1000\\
\hline
\multicolumn{1}{|c|}{(5,5,1)} & 11\\
\multicolumn{1}{|c|}{(5,4,2)} & 660\\
\multicolumn{1}{|c|}{(5,3,3)} & 891\\
\multicolumn{1}{|c|}{(4,4,3)} & 3300\\
\hline
\multicolumn{1}{|c|}{(6,5,1)} & 12\\
\multicolumn{1}{|c|}{(6,4,2)} & 240\\
\multicolumn{1}{|c|}{(6,3,3)} & 294\\
\multicolumn{1}{|c|}{(5,5,2)} & 750\\
\multicolumn{1}{|c|}{(5,4,3) }& 10500\\
\multicolumn{1}{|c|}{(4,4,4)} & 5000\\
\hline
\\
\\
\end{array}
\;\;
\begin{array}{cc|}
\hline
\multicolumn{1}{|c|}{\lambda \in \Lambda_3^+(n)} & \big|\textup{Tri}^\lambda_n\big| \\
\hline
\multicolumn{1}{|c|}{(6,6,1)} & 13\\
\multicolumn{1}{|c|}{(6,5,2)} & 1430\\
\multicolumn{1}{|c|}{(6,4,3)} & 8008\\
\multicolumn{1}{|c|}{(5,5,3) }& 14300\\
\multicolumn{1}{|c|}{(5,4,4) }& 35035\\
\hline
\multicolumn{1}{|c|}{(7,6,1) }& 14\\
\multicolumn{1}{|c|}{(7,5,2) }& 490\\
\multicolumn{1}{|c|}{(7,4,3) }& 2352\\
\multicolumn{1}{|c|}{(6,6,2) }& 1512\\
\multicolumn{1}{|c|}{(6,5,3) }& 39690\\
\multicolumn{1}{|c|}{(6,4,4) }& 43904\\
\multicolumn{1}{|c|}{(5,5,4) }& 120050\\
\hline
\\
\\
\\
\\
\end{array}
\;\;
\begin{array}{|c|c|}
\hline
\multicolumn{1}{|c|}{\lambda \in \Lambda_3^+(n)} & \textup{Tri}^\lambda_n \\
\hline
\multicolumn{1}{|c|}{(7,7,1)} & 15\\
\multicolumn{1}{|c|}{(7,6,2)} & 2730\\
\multicolumn{1}{|c|}{(7,5,3)} & 27300\\
\multicolumn{1}{|c|}{(7,4,4)} & 28080\\
\multicolumn{1}{|c|}{(6,6,3)} & 47775\\
\multicolumn{1}{|c|}{(6,5,4)} & 458640\\
\multicolumn{1}{|c|}{(5,5,5)} & 178360\\
\hline
\multicolumn{1}{|c|}{(8,7,1)}& 16\\
\multicolumn{1}{|c|}{(8,6,2)}& 896\\
\multicolumn{1}{|c|}{(8,5,3)} & 7392\\
\multicolumn{1}{|c|}{(8,4,4)} & 7200\\
\multicolumn{1}{|c|}{(7,7,2)} & 2744\\
\multicolumn{1}{|c|}{(7,6,3)} & 120736\\
\multicolumn{1}{|c|}{(7,5,4)} & 493920\\
\multicolumn{1}{|c|}{(6,6,4)} & 658560\\
\multicolumn{1}{|c|}{(6,5,5)} & 1382976\\
\hline
\end{array}
\end{align*}
\caption{Enumeration of triangulations of the regular convex \(n\)-gon by type, for \(3 \leq n \leq 16\).
}
\label{fig:tridata}       
\end{figure}

\subsection{Equitable 3-colorings of triangulations}
As defined in \cite{Meyer}, a proper \(k\)-coloring of a graph \(G = (V, E)\) is said to be {\em equitable} if the number of vertices of distinct colors differs by at most one; i.e. \(|V_i - V_j| \leq 1\) for all \(i,j \in \{1, \ldots, k\}\).

The following proposition, in conjunction with Theorem~\ref{TriThm}, can be used to count the number of equitably 3-colorable triangulations of the convex polygon, or assess the probability that a randomly chosen triangulation is equitably 3-colorable. 

\begin{Proposition}
Take \(\lambda = (\lceil n/3 \rceil, n - \lceil n/3 \rceil - \lfloor n/3 \rfloor, \lfloor n/3 \rfloor)\). Then \(\textup{Tri}^\lambda_n\) is the set of triangulations \(\tau\in \textup{Tri}_n\) that possess an equitable proper 3-coloring.
\end{Proposition}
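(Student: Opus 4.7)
The plan is to reduce the claim to the fact (invoked in \S\ref{TriSec}) that every triangulation of a convex polygon admits a proper 3-coloring which is unique up to permutation of colors. Granted this, the partition $\textup{type}(\tau) \in \Lambda_3^+(n)$ completely determines the multiset of color class sizes of \emph{any} proper 3-coloring of $\tau$, so the question of whether $\tau$ admits an equitable proper 3-coloring depends only on $\textup{type}(\tau)$.

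First I would observe that a proper 3-coloring is equitable precisely when its sorted character $(a_1,a_2,a_3) \in \Lambda_3^+(n)$ satisfies $a_1 - a_3 \le 1$ (which then forces both $a_1-a_2 \le 1$ and $a_2 - a_3 \le 1$). Next I would verify that among all elements of $\Lambda_3^+(n)$, exactly one such partition exists, namely the $\lambda$ in the statement, by a short case analysis on $n \bmod 3$: writing $n = 3k + r$ with $r \in \{0,1,2\}$, the partition $(\lceil n/3 \rceil,\, n - \lceil n/3 \rceil - \lfloor n/3 \rfloor,\, \lfloor n/3 \rfloor)$ equals $(k,k,k)$, $(k+1,k,k)$, or $(k+1,k+1,k)$ respectively, and in each case these are manifestly the unique partitions of $n$ into at most three parts whose parts differ by at most one.

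Combining these two observations, a triangulation $\tau$ admits an equitable proper 3-coloring iff its (essentially unique) proper 3-coloring has sorted character equal to $\lambda$, i.e.\ iff $\textup{type}(\tau) = \lambda$, which is exactly the condition $\tau \in \textup{Tri}^\lambda_n$.

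There is no real obstacle here; the entire content of the proposition is packaged in the uniqueness-up-to-color-permutation of the proper 3-coloring of a triangulation (cited in \S\ref{TriSec}), together with the elementary arithmetic identifying the unique equitable partition of $n$ into three parts. The proof can be written in a few lines and requires no appeal to Theorem~\ref{TriThm} or Corollary~\ref{TreeCor}.
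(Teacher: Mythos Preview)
Your proposal is correct and takes essentially the same approach as the paper's proof: both reduce the claim to the uniqueness (up to color permutation) of the proper 3-coloring of a triangulation, and then observe that equitability is equivalent to the condition $\textup{type}(\tau)_1 - \textup{type}(\tau)_3 \le 1$, which singles out the partition $\lambda$ in the statement. Your write-up simply fills in the case analysis on $n \bmod 3$ that the paper leaves implicit.
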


\begin{proof}
Let \(\tau \in \textup{Tri}_n\). As \(\tau\) has a unique proper 3-coloring, it follows that \(\tau\) is equitably 3-colorable if and only if \(\textup{type}(\tau)_1 - \textup{type}(\tau)_3 \leq 1\). The result follows.
\end{proof}

\section{Future work/related questions}

\subsection{Further refinement} The authors came across Theorem~\ref{BigThm}, and in particular the key polynomial \(P_k\) in (\ref{ThePoly1}) via trial and error and a large number of calculations. The current proof relies on induction and heavy manipulations of binomial products and sums. It would be interesting to find a proof which more satisfactorily explains the combinatorial `raison d'\^etre' for \(P_k\).

In this direction we make a brief note. The coefficients of monomials in the variables \(x_i\), \(y_i\) in \(P_k\) take on values in \(\mathbb{Z}\). But if we utilize a change of variables \(z_i := x_i - y_i\), the coefficients of monomials in \(z_i, y_i\) in \(P_k\) appear to take values purely in \(\mathbb{Z}_{\geq 0}\). 
Writing \(\mu_i:= \lambda_i - i_{\underline c}\) for the number of non-root \(i\)-colored vertices in a \((\lambda, \underline c)\)-forest, we would have then that
\begin{align*}
P(\lambda_1, \ldots, \lambda_k, 1_{\underline c}, \ldots, k_{\underline c}) = 
P(\mu_1 + 1_{\underline c}, \ldots, \mu_k + k_{\underline c}, 1_{\underline c}, \ldots, k_{\underline c})
\end{align*}
evaluates to a sum of non-negative integers. This suggests there may be a statistic on \((\lambda, \underline{c})\)-forests which allows for more refined enumeration, and perhaps a more satisfactory explanation of the involvement of the polynomial \(P_k\) in Theorem~\ref{BigThm}.

\section{Appendix}\label{AppSec}

\subsection{Preparing to prove Lemma~\ref{ThePolyLem}(iv)}
We begin by recounting some basic binomial coefficient identities.
\begin{Lemma}\label{combid}
For all \(s,m,r,u \in \mathbb{Z}_{\geq 0}\), we have
\begin{enumerate}
\item \(
\sum_{r=0}^u {s \choose m+r} {u \choose r} = {s+u \choose m+u}
\)
\item 
\(
\sum_{r=0}^u(-1)^r{s-r \choose m}{u \choose r} = {s-u \choose m-u}
\)
\item 
\(
\sum_{r=0}^u(-1)^r r{s-r \choose m}{u \choose r} = -u{s-u \choose m-u+1}
\)
\end{enumerate}
\end{Lemma}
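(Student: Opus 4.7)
The plan is to observe that all three identities are classical binomial convolutions, attackable via Vandermonde's identity and generating-function coefficient extraction, with (iii) reducible to (ii) by the absorption identity.

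For (i), I would simply use the symmetry \(\binom{u}{r} = \binom{u}{u-r}\) to rewrite the sum as \(\sum_{r=0}^u \binom{s}{m+r}\binom{u}{u-r}\), which by Vandermonde's convolution (with target index \(m+u\)) equals \(\binom{s+u}{m+u}\). This is one line.

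For (ii), I would proceed by coefficient extraction: write \(\binom{s-r}{m} = [x^m](1+x)^{s-r}\) and compute
\[
\sum_{r=0}^u (-1)^r \binom{u}{r}(1+x)^{s-r} = (1+x)^s\left(1 - \frac{1}{1+x}\right)^u = x^u (1+x)^{s-u},
\]
so that \(\sum_r (-1)^r \binom{u}{r}\binom{s-r}{m} = [x^m] x^u(1+x)^{s-u} = [x^{m-u}](1+x)^{s-u} = \binom{s-u}{m-u}\), with the standard convention \(\binom{a}{b}=0\) for \(b<0\) absorbing the boundary cases naturally.

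For (iii), I would apply the absorption identity \(r\binom{u}{r} = u\binom{u-1}{r-1}\) to factor out \(u\), then shift the summation index \(r \mapsto r+1\) (the \(r=0\) term vanishes) to obtain
\[
\sum_{r=0}^u (-1)^r r \binom{s-r}{m}\binom{u}{r} = -u \sum_{r'=0}^{u-1}(-1)^{r'}\binom{(s-1)-r'}{m}\binom{u-1}{r'},
\]
and then invoke (ii) with parameters \((s-1, m, u-1)\) to conclude the right-hand side equals \(-u\binom{s-u}{m-u+1}\). The only subtlety — and it is mild — is careful sign tracking after the index shift and confirming that the degenerate cases (\(u=0\), or \(m-u+1<0\)) are consistent with the stated conventions; these do not pose a serious obstacle.
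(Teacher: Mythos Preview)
Your proposal is correct and follows essentially the same route as the paper. The paper cites (i) and (ii) as standard (referring to Gould's tables), while for (iii) it also begins with the absorption identity \(r\binom{u}{r}=u\binom{u-1}{r-1}\); the only minor difference is that the paper then uses Pascal's rule \(\binom{u-1}{r-1}=\binom{u}{r}-\binom{u-1}{r}\) and applies (ii) twice, whereas your index shift \(r\mapsto r'+1\) lets you invoke (ii) just once with parameters \((s-1,m,u-1)\)---a slightly cleaner finish, but the same idea.
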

\begin{proof}
Parts (i) and (ii) are standard; see for instance \cite[Vol.4, (6.69), (10.20)]{Gould}.
For (iii), we have:
\begin{align*}
\sum_{r=0}^u (-1)^r{s-r \choose m} {u \choose r} &= u \sum_{r=0}^u (-1)^r{s-r \choose m} {u-1 \choose r-1}
= u\left[ \sum_{r=0}^u {s-r \choose m} \left({u \choose r} - {u-1 \choose r} \right)  \right]\\
&=u \left[ {s-u \choose m- u} - {s - u+1 \choose m-u+1}\right]
=
-u {s-u \choose m-u+1},
\end{align*}
where we have applied (ii) for the third equality.
\end{proof}

The following technical result will be used repeatedly in the proof of Lemma~\ref{LRequal}.\begin{Lemma}\label{wildLem}  
Let \(\kappa, \phi, \tau_1, \tau_2, \tau_3,c,d \in \mathbb{Z}_{\geq 0}\), and \(\varepsilon \in \{0,1\}\). Then we have that
\begin{align}\label{S}
&\sum (-1)^{\phi + \alpha_1 + \alpha_3 + \omega + \eta_1 + \eta_2 + \eta_3}\,
2^{\tau_1 + \tau_2 - \alpha_1 - \eta_1 - \eta_2}{\kappa - \alpha_1 - \tau_2 - \tau_3 + \omega - \alpha_3 - c \choose \phi - \kappa + \tau_1 + \tau_2 + \omega + d}\\
& \hspace{40mm} \times
{\tau_3 \choose \omega} {\tau_1 \choose \alpha_1} {\omega \choose \alpha_3} {\tau_1 - \alpha_1 \choose \eta_1} {\tau_2 \choose \eta_2} {\omega- \alpha_3 \choose \eta_3}(\alpha_1 - \omega + \alpha_3)^\varepsilon \nonumber
\end{align}
is equal to
\begin{align*}
(-1)^{\varphi + \varepsilon}\tau_1^\varepsilon
{\kappa - \tau_1 - \tau_2 - c \choose \phi - \kappa + \tau_2 + \tau_3 + d + \varepsilon},
\end{align*}
where the sum in (\ref{S}) ranges over 
\begin{align*}
\omega \in [0,\tau_3];
\;\;\;
\alpha_1 \in [0, \tau_1];
\;\;\;
\alpha_3 \in [0, \omega];
\;\;\;
\eta_1 \in [0, \tau_1 - \alpha_1];
\;\;\;
\eta_2 \in [0,\tau_2];
\;\;\;
\eta_3 \in [0, \omega- \alpha_3].
\end{align*}
\end{Lemma}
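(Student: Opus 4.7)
The plan is to evaluate the six nested sums one at a time in a carefully chosen order, so that each sum reduces either to the binomial theorem or to one of the identities collected in Lemma~\ref{combid}. All binomial coefficients are interpreted with the standard convention that \({a \choose b} = 0\) when \(b < 0\) or \(b > a\) (with \(a \geq 0\)).

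First I would sum over \(\eta_3\): the only \(\eta_3\)-dependence in (\ref{S}) is in the factor \((-1)^{\eta_3}{\omega - \alpha_3 \choose \eta_3}\), so
\(\sum_{\eta_3 = 0}^{\omega - \alpha_3} (-1)^{\eta_3}{\omega - \alpha_3 \choose \eta_3} = (1-1)^{\omega - \alpha_3} = \delta_{\omega, \alpha_3}.\)
This collapses the sum to the locus \(\omega = \alpha_3\), which simultaneously cancels the \((-1)^{\alpha_3 + \omega}\) in the sign, reduces \((\alpha_1 - \omega + \alpha_3)^\varepsilon\) to \(\alpha_1^\varepsilon\), trivializes \({\omega \choose \alpha_3}\), and drops the \(\omega\) term from the upper entry of the main binomial coefficient.

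Next I would sum over \(\eta_1\) and \(\eta_2\). Splitting \(2^{\tau_1 + \tau_2 - \alpha_1 - \eta_1 - \eta_2} = 2^{\tau_1 - \alpha_1 - \eta_1}\cdot 2^{\tau_2 - \eta_2}\), each of these sums is of the shape \(\sum_{r=0}^n (-1)^r 2^{n-r}{n \choose r} = (2-1)^n = 1\) by the binomial theorem, so both indices and all powers of \(2\) disappear cleanly. What remains is
\begin{align*}
(-1)^{\phi} \sum_{\omega=0}^{\tau_3}\sum_{\alpha_1 = 0}^{\tau_1} (-1)^{\alpha_1}\alpha_1^{\varepsilon}{\tau_3 \choose \omega}{\tau_1 \choose \alpha_1}{\kappa - \alpha_1 - \tau_2 - \tau_3 - c \choose \phi - \kappa + \tau_1 + \tau_2 + \omega + d}.
\end{align*}
I would then sum over \(\omega\) by applying Lemma~\ref{combid}(i) with \(u = \tau_3\), \(s = \kappa - \alpha_1 - \tau_2 - \tau_3 - c\), and \(m = \phi - \kappa + \tau_1 + \tau_2 + d\), which produces the single binomial coefficient \({\kappa - \alpha_1 - \tau_2 - c \choose \phi - \kappa + \tau_1 + \tau_2 + \tau_3 + d}\).

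Finally, I would sum over \(\alpha_1\). In the case \(\varepsilon = 0\), Lemma~\ref{combid}(ii) with \(u = \tau_1\), \(s = \kappa - \tau_2 - c\), \(m = \phi - \kappa + \tau_1 + \tau_2 + \tau_3 + d\) yields \({\kappa - \tau_1 - \tau_2 - c \choose \phi - \kappa + \tau_2 + \tau_3 + d}\), matching the target. In the case \(\varepsilon = 1\), Lemma~\ref{combid}(iii) with the same parameters yields \(-\tau_1{\kappa - \tau_1 - \tau_2 - c \choose \phi - \kappa + \tau_2 + \tau_3 + d + 1}\), which combines with the surviving \((-1)^\phi\) to give exactly \((-1)^{\phi + 1}\tau_1{\kappa - \tau_1 - \tau_2 - c \choose \phi - \kappa + \tau_2 + \tau_3 + d + 1}\). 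Both outputs fit under the single formula \((-1)^{\phi + \varepsilon}\tau_1^\varepsilon{\kappa - \tau_1 - \tau_2 - c \choose \phi - \kappa + \tau_2 + \tau_3 + d + \varepsilon}\), completing the proof.

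The main obstacle is not any single identity but rather recognizing the correct order of summation: once one notices that \(\eta_3\) must go first (its sum is what kills the complication of having both \(\omega\) and \(\alpha_3\) present via the nontrivial factor \((\alpha_1 - \omega + \alpha_3)^\varepsilon\)) and that the \(\eta_1, \eta_2\) sums are arranged precisely so the powers of \(2\) cancel against the alternating signs, the remaining two sums line up directly with parts (i) and (ii)/(iii) of Lemma~\ref{combid}. The rest is careful bookkeeping of the indices entering and leaving the upper and lower entries of the principal binomial coefficient.
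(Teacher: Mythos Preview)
Your proof is correct and essentially identical to the paper's: both arguments first dispatch the \(\eta_1,\eta_2,\eta_3\) sums via the binomial theorem (yielding \(1,1,\delta_{\omega,\alpha_3}\) respectively), then evaluate the \(\omega\) sum using Lemma~\ref{combid}(i), and finally the \(\alpha_1\) sum using Lemma~\ref{combid}(ii) or (iii) according to \(\varepsilon\). The only cosmetic difference is that the paper packages the three \(\eta\)-sums as auxiliary quantities \(H_1,H_2,H_3\) computed in parallel, whereas you single out \(\eta_3\) first to explain why \(\omega=\alpha_3\) collapses the awkward factor \((\alpha_1-\omega+\alpha_3)^\varepsilon\).
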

\begin{proof}
Writing \(S\) for the sum in (\ref{S}), we may organize terms so that
\begin{align*}
S=
\sum (-1)^{\phi + \alpha_1} H_1H_2H_3{\kappa - \alpha_1 - \tau_2 - \tau_3 + \omega - \alpha_3 - c \choose \phi - \kappa + \tau_1 + \tau_2 + \omega + d}{\tau_3 \choose \omega} {\tau_1 \choose \alpha_1} {\omega \choose \alpha_3}( \alpha_1 - \omega + \alpha_3)^\varepsilon,
\end{align*}
where the sum is over all
\begin{align*}
\omega \in [0,\tau_3]; \qquad \alpha_1 \in [0,\tau_1]; \qquad \alpha_3 \in [0, \omega], 
\end{align*}
and
\begin{align*}
H_1 &= 
\sum_{\eta_1 = 0}^{\tau_1 - \alpha_1} (-1)^{\eta_1} 2^{\tau_1 - \alpha_1 - \eta_1}{\tau_1 - \alpha_1 \choose \eta_1}=1
\\
H_2 &=
\sum_{\eta_2 = 0}^{\tau_2} (-1)^{\tau_2 - \eta_2} 2^{\tau_2 - \eta_2}{\tau_2 \choose \eta_2}=1
\\
H_3&=
\sum_{\eta_3 = 0}^{\omega- \alpha_3} (-1)^{\omega - \alpha_3 - \eta_3} {\omega - \alpha_3 \choose \eta_3} = \delta_{\omega, \alpha_3}.
\end{align*}
Therefore
\begin{align*}
S &= \sum_{\alpha_1 =0}^{\tau_1} (-1)^{\phi + \alpha_1} \alpha_1^\varepsilon {\tau_1 \choose \alpha_1} \left(\sum_{\omega = 0}^{\tau_3} {\kappa - \alpha_1 - \tau_2 - \tau_3 - c \choose \phi - \kappa + \tau_1 + \tau_2 + \omega + d}{\tau_3 \choose \omega}\right)\\
&=
\sum_{\alpha_1 =0}^{\tau_1} (-1)^{\phi + \alpha_1} \alpha_1^\varepsilon
{ \kappa - \alpha_1 - \tau_2 - c \choose \phi - \kappa + \tau_1 + \tau_2 + \tau_3 + d} {\tau_1 \choose \alpha_1}.
\end{align*}
The second equality above follows from Lemma~\ref{combid}(i). Now, applying Lemma~\ref{combid}(ii) in the case \(\varepsilon=0\), or Lemma~\ref{combid}(iii) in the case \(\varepsilon = 1\), gives the final result.
\end{proof}

\subsection{ Proving Lemma~\ref{ThePolyLem}(iv)}
We first define a number of ancillary polynomials in the polynomial ring \(R_k' = \mathbb{Z}[x_1, \ldots, x_{k-1}, z, y_1, \ldots, y_k]\). For \(A \subseteq [1,k-1]\), let \(A^c\) denote the complement \(A^c = [1,k-1] \backslash A\), and define \(Q(x_1, \ldots, x_{k-1}, z, y_1, \ldots, y_k)\) by:
\begin{align}
Q &=\hspace{-3mm} \sum_{A \subseteq [1,k-1]}
(-1)^{|A|} \left( 
x_{(A, + )}z^{k-|A|-1} - |A| z^{k-|A|} 
+x_{( A^c, +)} z^{k-|A|-2}y_k + |A|z^{k-|A|-1}y_k
\right)
y_{(A, \times )}.
\label{ThePoly3}
\end{align}
We then set:
\begin{align}
\mathscr{L} = Q \cdot \prod_{i \in [1,k-1]} (z - x_i - 1). \label{Ldef}
\end{align}
We also define 
\begin{align*}
J^{(1)}&= \sum
(-1)^{k+\ell + 1 + |M| + |V|}
2^{|V| - |V \cap M|}
|A|
{k - |A| - 1 \choose \ell}
z^{k + \ell - 1 - |V| - |W|}
x_{(V, \times)}y_{(W, \times)}y_{(B, \times)}y_k;\\
J^{(2)}&=
\sum
(-1)^{k+\ell + |M| + |V|}2^{|V| - |V \cap M|}
{k - |A| - 2 \choose \ell}
z^{k + \ell - 1 - |V| - |W|}x_{(A^c,+)}x_{(V, \times)}y_{(W, \times)}y_{(B, \times)}y_k;\\
J^{(3)}&=
\sum
(-1)^{k+\ell + 1+|M| + |V|}2^{|V| - |V \cap M|}
{k - |A| - 2 \choose \ell }
z^{k + \ell - 1 - |V| - |W|}
x_{(A^c,+)}x_{(V, \times)}y_{(W, \times)}y_{(B, \times)};\\
J^{(4)}&=
\sum
(-1)^{k+\ell + 1+|M| + |V|}2^{|V| - |V \cap M|}
{k - |A| - 1 \choose \ell}
z^{k + \ell - 1 - |V| - |W|}
x_{(A,+)}x_{(V, \times)}y_{(W, \times)}y_{(B, \times)};\\
J^{(5)}&=
\sum
(-1)^{k+\ell + 1+|M| + |V|}2^{|V| - |V \cap M|}
|A|
{k - |A| - 1 \choose \ell-1}
z^{k + \ell - 1 - |V| - |W|}
x_{(V, \times)}y_{(W, \times)}y_{(B, \times)},
\end{align*}
where in each case, the sum is over:
\begin{align*}
A,M,B,V,W \subseteq [1,k-1]; 
\;\;\;
\ell \in \mathbb{Z}_{\geq 0}
\end{align*}
such that
\begin{align}\label{AMBcrit}
A \backslash M \subseteq B \subseteq A; \qquad
V \cap W = \varnothing; \qquad
M \subseteq V \cup W.
\end{align}
We set:
\begin{align*}
\mathscr{R} = J^{(1)} + J^{(2)} + J^{(3)} + J^{(4)} + J^{(5)}.
\end{align*}

\begin{Lemma}\label{subz}
Substituting \(z = x_{([1,k],+)}\) in \(\mathscr{L}\) and \(\mathscr{R}\) yields the lefthand and righthand sides of (\ref{TheEqn}), respectively.
\end{Lemma}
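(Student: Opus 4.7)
The proof naturally splits into two independent verifications, one for $\mathscr{L}$ and one for $\mathscr{R}$, and both are essentially computations of expanding known expressions and relabeling indices.

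For the $\mathscr{L}$ part, observe that the substitution $z = x_{([1,k],+)}$ turns the outer factor $\prod_{i \in [1,k-1]}(z - x_i - 1)$ in (\ref{Ldef}) directly into the outer factor $\prod_{i=1}^{k-1}(x_{([1,k],+)}-x_i-1)$ in the LHS of (\ref{TheEqn}). So it suffices to show $Q\big|_{z = x_{([1,k],+)}} = P_k$. I would do this by taking $P_k$ in the compact form (\ref{ThePoly2}) and splitting its sum over $S \subseteq [1,k]$ according to whether $k \in S$. For $S = A \subseteq [1,k-1]$, the summand is exactly the contribution of the first pair of terms in (\ref{ThePoly3}). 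For $S = A \cup \{k\}$ with $A \subseteq [1,k-1]$, I would use $x_{(A,+)} + x_k = x_{([1,k],+)} - x_{(A^c,+)}$ to rewrite $x_{(S,+)} - |S|x_{([1,k],+)} = -\bigl(x_{(A^c,+)} + |A|\,x_{([1,k],+)}\bigr)$; together with the sign flip $(-1)^{|A|+1}$, this matches the last pair of terms in (\ref{ThePoly3}) carrying the $y_k$ factor. This gives the desired identity.

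For the $\mathscr{R}$ part, I would start from the RHS of (\ref{TheEqn}) with $x_{([1,k],+)}$ replaced by $z$ in every outer occurrence (this is the form $\mathscr{R}$ is meant to capture), and expand piece by piece. First, expand $P^{(I)}_k$ via (\ref{ThePoly2}) using the shifted variables $\tilde x_i = x_i - \delta_{i,k}$, $\tilde y_i = y_i + \delta_{i \in I} - \delta_{i,k}$, and $\tilde x_{([1,k],+)} = z - 1$; split the resulting sum over $S \subseteq [1,k]$ into cases $k \in S$ vs. $k \notin S$, write $S \cap [1,k-1] = A$, and expand the factor $(z-1)^{k-|S|-1}$ by the binomial theorem, introducing the summation index $\ell$. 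Second, expand $\tilde y_{(S,\times)}$ by choosing for each $i \in A$ whether to take $y_i$ (these $i$ form a set $B \subseteq A$) or $\delta_{i \in I}$ (these $i$ must lie in $I$, forcing $A \setminus B \subseteq I$); together with an auxiliary bookkeeping set $M$ to be identified from the $(-1)^{|S|}(\cdot)$ factor of $P^{(I)}_k$. Third, expand the products $\prod_{i \in I}(x_i - y_i)$ and $\prod_{j \in I^c}(z - 2x_j + y_j)$ by selecting, for each index, one of the monomial summands; let $V$ (resp. $W$) collect the indices that contribute $x_j$ (resp. $y_j$). Finally, interchange the order of summation so that $I$ is summed over last, with the resulting $I$-sum forcing the constraint $M \subseteq V \cup W$ (as the $I$-dependence disappears only under this compatibility). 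I expect the five sources $J^{(1)}, \ldots, J^{(5)}$ to correspond to the five combinatorial types of terms in the final expansion: respectively the $-|S|(z-1)$ contribution without/with $k \in S$, the $\tilde x_k$ contribution, the $x_{(A,+)}$ contribution, and an $\ell$-shifted remainder from the binomial expansion.

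The main obstacle will be the sign and index bookkeeping in the $\mathscr{R}$ part. The factor $2^{|V|-|V \cap M|}$ in each $J^{(i)}$ in particular must emerge from expanding $\prod_{j \in I^c}(z - 2x_j + y_j)$ after the $I$-summation is carried out (since each $j \notin M$ appears in both $V \cup W$-choices whether $j \in I$ or not, while each $j \in M$ is constrained). Getting the binomial shift ${k-|A|-1 \choose \ell}$ vs.\ ${k-|A|-2 \choose \ell}$ vs.\ ${k-|A|-1 \choose \ell-1}$ correct across the five cases, while simultaneously tracking the $(-1)^{k+\ell+\varepsilon+|M|+|V|}$ signs, will be the most error-prone part and the real content of the verification; once this matching is set up correctly, the identification $\mathscr{R}\big|_{z = x_{([1,k],+)}} = \textup{RHS of (\ref{TheEqn})}$ follows by direct comparison of monomials.
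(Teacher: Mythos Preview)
Your treatment of $\mathscr{L}$ is correct and is precisely what the paper does (the paper simply asserts $P_k = Q|_{z=x_{([1,k],+)}}$ as ``straightforward''; your split over $k\in S$ versus $k\notin S$ is exactly how one verifies it).

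For $\mathscr{R}$ your overall strategy---expand $P_k^{(I)}$ via $Q$ with shifted arguments, expand the two products, and collect---is also the paper's strategy, but your description of the set $M$ is off and would lead you astray. In the paper's argument, $M$ is \emph{not} an auxiliary set extracted from the $(-1)^{|S|}(\cdots)$ factor of $P_k^{(I)}$, nor does it arise from summing over $I$: the paper simply renames the outer index $I$ to $M$ and keeps it as a live summation index in the $J^{(i)}$. The constraint $M\subseteq V\cup W$ then falls out immediately from expanding $\prod_{s\in M}(x_s-y_s)\prod_{s\in M^c}(z-2x_s+y_s)$, since each $s\in M$ must contribute either $x_s$ (so $s\in V$) or $-y_s$ (so $s\in W$); likewise the factor $2^{|V|-|V\cap M|}$ comes directly from the $-2x_s$ choices for $s\in M^c\cap V$, with no $I$-summation involved. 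Your constraint $A\setminus B\subseteq I$ from the $\tilde y$-expansion is correct and, once you set $M=I$, becomes exactly the paper's $A\setminus M\subseteq B\subseteq A$.

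Your guess about the origin of the five $J^{(i)}$ is also imprecise. The paper first expands $Q(x_1,\ldots,x_{k-1},z-1,y_1,\ldots,y_{k-1},y_k-1)$ into five bracketed terms in $z^\ell y_{(A,\times)}$ (the four summands of $Q$ yield six pieces after $z\mapsto z-1$, $y_k\mapsto y_k-1$, and two of them combine via Pascal's rule into the $\binom{k-|A|-1}{\ell-1}$ term), then replaces each $y_i$ by $y_i+\delta_{i\in M}$ to introduce $B$, and finally multiplies by the product expansion above. Each bracketed term, after these two multiplications, becomes one $J^{(i)}$. If you carry out the computation with $M=I$ rather than treating $M$ as separate, everything lines up.
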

\begin{proof}
It is straightforward to check from (\ref{ThePoly2}, \ref{ThePoly3}) that 
\begin{align}
P_k (x_1, \ldots, x_k, y_1, \ldots, y_k) = Q(x_1, \ldots, x_{k-1}, x_{([1,k],+)}, y_1, \ldots, y_k),\label{PQsame}
\end{align}
and thus the claim for \(\mathscr{L}\) immediately follows from (\ref{Ldef}).

Expanding and collecting powers of \(z\), we have that \( Q(x_1, \ldots, x_{k-1}, z-1, y_1, \ldots, y_{k-1}, y_k - 1)\) is equal to
\begin{align*}
&\sum_{A \subseteq [1,k-1]}\sum_{ \ell =0}^\infty(-1)^{k - \ell - 1}\bigg[|A|{k - |A| - 1 \choose \ell} y_k - {k - |A| - 2 \choose \ell} x_{(A^c, +)} y_k\\
& \hspace{10mm}+ {k - |A| - 2 \choose \ell} x_{(A^c, +)}
 + {k - |A| -1 \choose \ell}x_{(A, +)} + |A|{k - |A| -1 \choose \ell -1}\bigg]
 z^\ell y_{(A, \times)}.
\end{align*}
Therefore, for \(M \subseteq [1,k-1]\), expanding 
\begin{align}
Q^{(M)}:=
Q(x_1, \ldots, x_{k-1}, z-1, y_1 + \delta_{1 \in M}, \ldots, y_{k-1} + \delta_{(k-1) \in M}, y_k -1)
\label{QMdef}
\end{align}
yields
\begin{align*}
Q^{(M)}&=
\sum_{A \subseteq [1,k-1]}
\sum_{A\backslash M \subseteq B \subseteq A}
\sum_{ \ell =0}^\infty(-1)^{k - \ell - 1}\bigg[|A|{k - |A| - 1 \choose \ell} y_k - {k - |A| - 2 \choose \ell} x_{(A^c, +)} y_k\\
& \hspace{10mm}+ {k - |A| - 2 \choose \ell} x_{(A^c, +)}
 + {k - |A| -1 \choose \ell}x_{(A, +)} + |A|{k - |A| -1 \choose \ell -1}\bigg]
 z^\ell y_{(B, \times)}.
\end{align*}
We also have that
\begin{align*}
\prod_{s \in M} (x_s - y_s) \prod_{s \in M^c} (z- 2x_s +y_s) 
&=
\sum_{
\substack{
V,W \subseteq [1,k-1]\\
V \cap W = \varnothing\\
M \subseteq V \cup W
}}
(-1)^{|M| + |V|}2^{|V| - |V \cap M|} z^{k - 1 - |V| - |W|}x_{(V,\times)} y_{(W, \times)}.
\end{align*}
Therefore, expanding and collecting like terms gives
\begin{align}
\sum_{M \subseteq [1,k-1]}Q^{(M)}\prod_{s \in M} (x_s - y_s) \prod_{s \in M^c} (z- 2x_s +y_s) 
=
J^{(1)} + J^{(2)} + J^{(3)} + J^{(4)} + J^{(5)} = \mathscr{R}.\label{compareRight}
\end{align}
Now, it follows from (\ref{TweakedPoly}, \ref{PQsame}, \ref{QMdef}) that
\begin{align*}
P_k^{(M)}(x_1, \ldots, x_k, y_1, \ldots, y_k) = Q^{(M)}(x_1, \ldots, x_{k-1}, x_{([1,k],+)}, y_1, \ldots, y_k).
\end{align*}
So, comparing the lefthand side of (\ref{compareRight}) to the righthand side of (\ref{TheEqn}) yields the result for \(\mathscr{R}\).
\end{proof}

For pairwise disjoint \(T_1, T_2, T_3 \subseteq [1,k-1]\), \(q \in \mathbb{Z}_{\geq 0}\), we define associated monomials in \(R_k'\):
\begin{align*}
m_{T_1, T_2, T_3, q} := x_{(T_1 \sqcup T_2, \times)} y_{(T_2 \sqcup T_3, \times)} z^q
\end{align*}
We now show that \(\mathscr{L} = \mathscr{R}\), by expanding each in terms of these monomials.

\begin{Lemma}\label{LRequal}
Both \(\mathscr{L}\), \(\mathscr{R}\) are equal to:
\begin{align}
&
\sum
(-1)^q
\left[
(|T_2| + |T_3|)
{k - |T_1| - |T_2| - 1
\choose
q - k + |T_2| + |T_3| + 1}
-
|T_1|
{k - |T_1| - |T_2| \choose q - k + |T_2| + |T_3| + 2}
\right]
y_km_{T_1, T_2, T_3, q}\nonumber\\
&\hspace{5mm}+
\sum
(-1)^{q+1}
{k - |T_1| - |T_2| - 1 \choose q - k + |T_2| + |T_3| + 2} x_j y_k m_{T_1, T_2, T_3, q}\nonumber\\
&\hspace{10mm}+
\sum
(-1)^q
\left[
(|T_2| + |T_3|)
{k - |T_1| - |T_2| - 1 \choose q - k + |T_2| + |T_3|}
+|T_2|
{k - |T_1| - |T_2| \choose q - k + |T_2| + |T_3| +1}
\right]
m_{T_1, T_2, T_3, q}\nonumber\\
&\hspace{15mm}+
\sum (-1)^q
{k - |T_1| - |T_2| - 1 \choose q - k + |T_2| + |T_3| + 1}
x_j m_{T_1, T_2, T_3, q},\label{bigugly}
\end{align}
where the first and third sums are over disjoint \(T_1, T_2, T_3 \subseteq [1,k-1]\), the second sum is over disjoint \(T_1, T_2, T_3 \subseteq [1,k-1]\), \(j \in T_1\), and the fourth sum is over disjoint \(T_1, T_2, T_3 \subseteq [1,k-1]\), \(j \in T_2\).
\end{Lemma}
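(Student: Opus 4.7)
The plan is to expand both $\mathscr{L}$ and $\mathscr{R}$ into the basis of monomials of the form $m_{T_1,T_2,T_3,q}$, possibly decorated with an extra factor of $y_k$ or of $x_j$ (for $j\in T_1$ or $j\in T_2$). Both sides lie in $R_k'$ and are manifestly linear in $y_k$ and linear in each $y_i$, while the $x$-variables can appear with multiplicity at most two (a single extra $x_j$ factor beyond $x_{(T_1\cup T_2,\times)}$); so the four rows of (\ref{bigugly}) exhaust the monomial types that can arise. I would therefore compute, for each monomial type, the coefficient produced by $\mathscr{L}$ and by $\mathscr{R}$, and verify that both equal the coefficient prescribed by (\ref{bigugly}).

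For $\mathscr{L}$, I would expand $\prod_{i\in[1,k-1]}(z-x_i-1)$ as a sum over disjoint pairs $V,U\subseteq[1,k-1]$ of terms $(-1)^{|V|+|U|}z^{k-1-|V|-|U|}x_{(V,\times)}$ and multiply term-by-term against the four summands of $Q$ from (\ref{ThePoly3}). For a fixed target monomial, the set $A$ in $Q$ is forced (it must produce the correct $y_{(A,\times)}$ factor, and optionally $y_k$), $V$ is forced up to whether an extra $x_j$ is contributed by $x_{(A,+)}$ or $x_{(A^c,+)}$, and $U$ is then free to range; summing over $U$ collapses via Lemma~\ref{combid}(ii) to the desired binomial coefficient. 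The four summands of $Q$ feed into the four rows of (\ref{bigugly}) in the evident way (the $|A|z^{k-|A|-1}y_k$ and $x_{(A,+)}z^{k-|A|-1}$ terms are linear in $x$ or purely in $z$, producing the first and third rows, while $x_{(A^c,+)}$ and its $y_k$ partner produce the second and fourth).

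For $\mathscr{R}$, the parametrization is more intricate because each $J^{(i)}$ carries five summation sets $A,M,B,V,W$ (plus $\ell$) constrained by (\ref{AMBcrit}). Here I fix a target monomial $x_j^\epsilon\, y_k^\epsilon\, m_{T_1,T_2,T_3,q}$ and ask which tuples contribute. The product $x_{(V,\times)}y_{(W,\times)}y_{(B,\times)}$ forces $V=T_1\cup T_2$ (adjusted by one index when an extra $x_j$ is present in $J^{(2)},J^{(3)},J^{(4)}$) and $W\cup B=T_2\cup T_3$, leaving $W\cap B$ to be summed freely. The remaining freedom in $A$, $M$ and $\ell$ (subject to $A\setminus M\subseteq B\subseteq A$, $M\subseteq V\cup W$) organizes itself as exactly the sextuple sum of Lemma~\ref{wildLem}, via a dictionary of the form $\tau_1\leftrightarrow|T_1|$, $\tau_2\leftrightarrow|T_2|$, $\tau_3\leftrightarrow|T_3|$, with $\kappa,\phi,c,d$ read off from $k,q,|T_i|$, and the exponent $\varepsilon\in\{0,1\}$ controlled by the additional $|A|$ factor in $J^{(1)},J^{(5)}$ or the additional $x_j$ factor in $J^{(2)},J^{(3)},J^{(4)}$. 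Each of the five applications then reduces a priori complicated sums to a single binomial coefficient, and the five contributions assemble into the four rows of (\ref{bigugly}).

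The main obstacle will be combinatorial bookkeeping rather than any individual identity: matching signs across the five pieces of $\mathscr{R}$, correctly identifying the distinguished index $j$ as lying in $T_1$ versus $T_2$ in the $x_j$-decorated contributions, and pinning down the exact values of the parameters $c,d,\varepsilon$ in each application of Lemma~\ref{wildLem} so that the output binomials precisely coincide with those in (\ref{bigugly}). Once the dictionary is set, each individual invocation of Lemma~\ref{wildLem} is mechanical, and the $\mathscr{L}$-side calculation is essentially a Vandermonde-style consolidation. After both expansions are in hand, monomial-by-monomial agreement gives $\mathscr{L}=\mathscr{R}$, which after the substitution $z=x_{([1,k],+)}$ and Lemma~\ref{subz} yields (\ref{TheEqn}).
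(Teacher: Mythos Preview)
Your plan is essentially the paper's own approach: for $\mathscr{L}$, read off each monomial coefficient directly from the product of $Q$ with the expanded $\prod_i(z-x_i-1)$; for $\mathscr{R}$, reparametrize the sets $A,M,B,V,W$ in each $J^{(i)}$ so that the remaining summation is exactly the sextuple sum of Lemma~\ref{wildLem}, then assemble. Two small corrections: on the $\mathscr{L}$ side no identity from Lemma~\ref{combid} is needed---once $A$, $V$, and the $z$-degree are pinned down, the remaining freedom is a subset of fixed size, giving the binomial directly; and on the $\mathscr{R}$ side the linearity in each $y_i$ for $i<k$ is \emph{not} manifest, since $B$ and $W$ may overlap in the $J^{(i)}$---the paper disposes of potential $y_t^2$ terms by a separate cancellation argument (pairing $M$ with $M\setminus\{t\}$ when $t\in B\cap W\cap A$), and you will need the same before restricting to the four square-free monomial types.
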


\begin{proof}
For compactness of notation, we will often use lowercase letters to refer to cardinality of sets throughout this proof, writing for instance \(t_1 = |T_1|\), etc. We also use \([m]f\) to denote the coefficient of a monomial \(m\) in the expansion of a polynomial \(f\). 

We first prove the claim for \(\mathscr{L}\). 
Applying the binomial theorem, we have
\begin{align}\label{usefulbin}
\prod_{i \in [1,k-1]}
(z-x_i - 1)
=
\sum_{V \subseteq [1,k-1], r \in \mathbb{Z}_{\geq 0}}
(-1)^{k-1 - r}{k -1 -|V| \choose r}x_{(V, \times )}z^r.
\end{align}

By consideration of the definition of \(\mathscr{L}\), it is clear that the only monomials involved in \(\mathscr{L}\) are those which appear in the lemma statement. So one must only compute:
\begin{align}
[y_km_{T_1, T_2, T_3, q}]\mathscr{L};
\;\;\;
[x_jy_km_{T_1, T_2, T_3, q}]\mathscr{L};
\;\;\;
[m_{T_1, T_2, T_3, q}]\mathscr{L};
\;\;\;
[x_iy_km_{T_1, T_2, T_3, q}]\mathscr{L},
\label{allmonoms}
\end{align}
for disjoint \(T_1, T_2, T_3 \subseteq [1,k-1]\), \(q \in \mathbb{Z}_{\geq 0}\), \(j \in T_1\), \(i \in T_2\), and demonstrate agreement with lines 1--4 of (\ref{bigugly}). We do this via a series of claims:

\vspace{2mm}
\noindent {\em Claim A1. For all disjoint \(T_1, T_2, T_3 \subseteq [1,k-1]\), \(q \in \mathbb{Z}_{\geq 0}\), we have:
\begin{align*}
[y_km_{T_1, T_2, T_3, q}]\mathscr{L}
=(-1)^{q}(t_2+ t_3){k - t_1 - t_2 -1 \choose q - k + t_2 + t_3 + 1} + (-1)^{q+1}t_1{k - t_1 - t_2 \choose q - k + t_2 + t_3 + 2}.
\end{align*}
}

\noindent {\em Claim A2. For all disjoint \(T_1, T_2, T_3 \subseteq [1,k-1]\), \(q \in \mathbb{Z}_{\geq 0}\), \(j \in T_1\), we have:
\begin{align*}
[x_jy_km_{T_1, T_2, T_3, q}]\mathscr{L}
=(-1)^{q+1}
{k - t_1 - t_2 - 1 \choose q - k + t_2 + t_3 + 2} .
\end{align*}
}

\noindent {\em Claim A3. For all disjoint \(T_1, T_2, T_3 \subseteq [1,k-1]\), \(q \in \mathbb{Z}_{\geq 0}\), we have:
\begin{align*}
[m_{T_1, T_2, T_3, q}]\mathscr{L}
=(-1)^q
(t_2 + t_3)
{k - t_1 - t_2 - 1 \choose q - k +t_2 + t_3}
+
(-1)^q
t_2
{k - t_1 - t_2 \choose q - k +t_2 + t_3 +1}.
\end{align*}
}

\noindent {\em Claim A4. For all disjoint \(T_1, T_2, T_3 \subseteq [1,k-1]\), \(q \in \mathbb{Z}_{\geq 0}\), \(j \in T_2\), we have:
\begin{align*}
[x_jm_{T_1, T_2, T_3, q}]\mathscr{L}
=(-1)^{q}
{k - t_1 - t_2 - 1 \choose q - k + t_2 + t_3 + 1} .
\end{align*}
}

\noindent
{\em Proof of Claim A1.}
By consideration of (\ref{ThePoly3},\ref{usefulbin}),
we have that \([y_k m_{T_1, T_2, T_3, q}] \mathscr{L}\) is equal to:
\begin{align}
&
[y_k m_{T_1, T_2, T_3, q}] \left(\sum
(-1)^{|A|}(-1)^{k - 1 - r}
{k -1 -|V| \choose r}
z^{k- |A|- 2}z^rx_{(A^c, +)} x_{(V, \times)}y_{(A, \times)}y_k
\right)\label{PO1}\\
&+
[y_k m_{T_1, T_2, T_3, q}] \left(\sum
(-1)^{|A|}(-1)^{k - 1 -r}|A|
{k -1 -|V| \choose r}
z^{k - |A| - 1}z^r x_{(V, \times)}y_{(A, \times)}y_k
\right),\label{PO2}
\end{align}
where the sums are over \(A,V \subseteq [1,k-1]\), \(r \in \mathbb{Z}_{\geq 0}\). First consider (\ref{PO1}). The only contribution to the coefficient is from \(A = T_2 \sqcup T_3\), \(r = q - k +t_2 + t_3 + 2\). Then \((T_1 \sqcup T_2) \cap A^c = T_1\), so \(x_{(A^c, +)}\) contributes any choice of \(x_j\) for \(j \in T_1\), and this forces \(V = (T_1 \sqcup T_2)\backslash \{j\}\). Therefore we have that (\ref{PO1}) is equal to
\begin{align}\label{PO3}
(-1)^{q+1}t_1{k - t_1 - t_2 \choose q - k + t_2 + t_3 + 2}.
\end{align}

Now consider (\ref{PO2}). The only contribution to the coefficient is from \(A = T_2 \sqcup T_3\), \(r = q - k +t_2 + t_3 + 1\), \(V = T_1 \sqcup T_2\). Therefore we have that (\ref{PO2}) is equal to
\begin{align}\label{PO4}
(-1)^{q}(t_2+ t_3){k - t_1 - t_2 -1 \choose q - k + t_2 + t_3 + 1}.
\end{align}

Thus, by (\ref{PO3}, \ref{PO4}), we have that Claim A1 is verified, and \([y_k m_{T_1, T_2, T_3, q}]\mathscr{L}\) agrees with the first line of (\ref{bigugly}). 

Checking Claims A2--A4 proceeds in a very similar fashion, so we leave this verification to the reader in the main version of this paper. Complete details are available in the arXiv version; see \S\ref{ArxivVersion}.

\begin{answer}
\vspace{2mm}
\noindent{ \em Proof of Claim A2.} 
By consideration of (\ref{ThePoly3},\ref{usefulbin}),
we have that \([x_jy_k m_{T_1, T_2, T_3, q}] \mathscr{L}\) is equal to:
\begin{align}
&
[x_jy_k m_{T_1, T_2, T_3, q}] \left(\sum
(-1)^{|A|}(-1)^{k - 1 - r}
{k -1 -|V| \choose r}
z^{k- |A|- 2}z^rx_{(A^c, +)} x_{(V, \times)}y_{(A, \times)}y_k
\right)\label{PO11}
\end{align}
where the sum is over \(A,V \subseteq [1,k-1]\), \(r \in \mathbb{Z}_{\geq 0}\). The only contribution to the coefficient is from \(A = T_2 \sqcup T_3\), \(V = T_1 \sqcup T_2\), \(r = q - k +t_2 + t_3 + 2\). We also have \((T_1 \sqcup T_2) \cap A^c = T_1\), and \(x_{(A^c, +)}\) contributes \(x_j\) to the product. Therefore we have that (\ref{PO11}) is equal to
\begin{align}\label{PO13}
(-1)^{q+1}{k - t_1 - t_2 -1 \choose q - k + t_2 + t_3 + 2}.
\end{align}
Thus, by (\ref{PO11}, \ref{PO13}), we have that Claim A2 is verified, and \([x_j y_k m_{T_1, T_2, T_3, q}]\mathscr{L}\) agrees with the second line of (\ref{bigugly}).

\vspace{2mm}
\noindent
{\em Proof of Claim A3.}
By consideration of (\ref{ThePoly3},\ref{usefulbin}),
we have that \([ m_{T_1, T_2, T_3, q}] \mathscr{L}\) is equal to:
\begin{align}
&
[ m_{T_1, T_2, T_3, q}] \left(\sum
(-1)^{|A|}(-1)^{k - 1 - r}
{k -1 -|V| \choose r}
z^{k- |A|- 1}z^rx_{(A, +)} x_{(V, \times)}y_{(A, \times)}
\right)\label{PO21}\\
&+
[m_{T_1, T_2, T_3, q}] \left(\sum
(-1)^{|A|+1}(-1)^{k - 1 -r}|A|
{k -1 -|V| \choose r}
z^{k - |A|}z^r x_{(V, \times)}y_{(A, \times)}
\right),\label{PO22}
\end{align}
where the sums are over \(A,V \subseteq [1,k-1]\), \(r \in \mathbb{Z}_{\geq 0}\). First consider (\ref{PO21}). The only contribution to the coefficient is from \(A = T_2 \sqcup T_3\), \(r = q - k +t_2 + t_3 + 1\). Then \((T_1 \sqcup T_2) \cap A = T_2\), so \(x_{(A^c, +)}\) contributes any choice of \(x_j\) for \(j \in T_2\), and this forces \(V = (T_1 \sqcup T_2)\backslash \{j\}\). Therefore we have that (\ref{PO21}) is equal to
\begin{align}\label{PO23}
(-1)^{q}t_2{k - t_1 - t_2 \choose q - k + t_2 + t_3 + 1}.
\end{align}

Now consider (\ref{PO22}). The only contribution to the coefficient is from \(A = T_2 \sqcup T_3\), \(r = q - k +t_2 + t_3 \), \(V = T_1 \sqcup T_2\). Therefore we have that (\ref{PO2}) is equal to
\begin{align}\label{PO24}
(-1)^{q}(t_2+ t_3){k - t_1 - t_2 -1 \choose q - k + t_2 + t_3}.
\end{align}

Thus, by (\ref{PO23}, \ref{PO24}), we have that Claim A3 is verified, and \([m_{T_1, T_2, T_3, q}]\mathscr{L}\) agrees with the third line of (\ref{bigugly}). 

\vspace{2mm}
\noindent{ \em Proof of Claim A4.} 
By consideration of (\ref{ThePoly3},\ref{usefulbin}),
we have that \([x_j m_{T_1, T_2, T_3, q}] \mathscr{L}\) is equal to:
\begin{align}
[ x_jm_{T_1, T_2, T_3, q}] \left(\sum
(-1)^{|A|}(-1)^{k - 1 - r}
{k -1 -|V| \choose r}
z^{k- |A|- 1}z^rx_{(A, +)} x_{(V, \times)}y_{(A, \times)}
\right)\label{PO31}
\end{align}
where the sum is over \(A,V \subseteq [1,k-1]\), \(r \in \mathbb{Z}_{\geq 0}\). The only contribution to the coefficient is from \(A = T_2 \sqcup T_3\), \(V = T_1 \sqcup T_2\), \(r = q - k +t_2 + t_3 + 1\). We also have \((T_1 \sqcup T_2) \cap A = T_2\), and \(x_{(A, +)}\) contributes \(x_j\) to the product. Therefore we have that (\ref{PO31}) is equal to
\begin{align}\label{PO33}
(-1)^{q}{k - t_1 - t_2 -1 \choose q - k + t_2 + t_3 + 1}.
\end{align}
Thus, by (\ref{PO31}, \ref{PO33}), we have that Claim A4 is verified, and \([x_j m_{T_1, T_2, T_3, q}]\mathscr{L}\) agrees with the fourth line of (\ref{bigugly}). Thus the lemma is verified for \(\mathscr{L}\).
\end{answer}

As Claims A1--A4 prove the veracity of the equality in the lemma statement for \(\mathscr{L}\), we now move on to prove the statement for \(\mathscr{R}\). We do this via a series of claims:

\vspace{2mm}
\noindent
{\em Claim B0. \([my_t^2] \mathscr{R} = 0\) for all \(t \in [1,k]\) and monomials \(m \in R'_k\).}

\vspace{2mm}
\noindent
{\em Claim B1. We have
\begin{align*}
J^{(1)} = 
\sum
\left[
(-1)^{q}(t_2+t_3)
{ k - t_1 - t_2 -1 \choose q-k+t_2 + t_3 +1}
+
(-1)^{q+1}
t_1
{k - t_1 - t_2 -1 \choose q- k + t_2 + t_3 + 2}\right]
y_km_{T_1, T_2, T_3, q},
\end{align*}
where the sum is over disjoint \(T_1, T_2, T_3 \subseteq [1,k-1]\) and \(q \in \mathbb{Z}_{\geq 0}\).
}

\vspace{2mm}
\noindent
{\em Claim B2. We have
\begin{align*}
J^{(2)} = &\sum 
(-1)^{q+1}t_1 {k - t_1 - t_2- 1 \choose q - k +t_2 + t_3 + 1}y_km_{T_1,T_2, T_3, q}\\
&\hspace{10mm}+
\sum
(-1)^{q+1} {k - t_1 - t_2 - 1 \choose q - k + t_2 +t_3+ 2}x_jy_km_{T_1,T_2, T_3, q},
\end{align*}
where the first sum is over disjoint \(T_1, T_2, T_3 \subseteq [1,k-1]\), \(q \in \mathbb{Z}_{\geq 0}\), and the second sum is over disjoint \(T_1, T_2, T_3 \subseteq [1,k-1]\), \(q \in \mathbb{Z}_{\geq 0}\), \(j \in T_1\).}

\vspace{2mm}
\noindent
{\em Claim B3. We have
\begin{align*}
J^{(3)} = &\sum 
(-1)^{q}t_1 {k -t_1 - t_2- 1 \choose q - k + t_2 + t_3 + 1}m_{T_1,T_2, T_3, q}\\
&\hspace{10mm}+
\sum
(-1)^{q} {k -t_1 - t_2 - 1 \choose q - k +t_2 + t_3 + 2}x_jm_{T_1,T_2, T_3, q}
\end{align*}
where the first sum is over disjoint \(T_1, T_2, T_3 \subseteq [1,k-1]\), \(q \in \mathbb{Z}_{\geq 0}\), and the second sum is over disjoint \(T_1, T_2, T_3 \subseteq [1,k-1]\), \(q \in \mathbb{Z}_{\geq 0}\), \(j \in T_1\).
}

\vspace{2mm}
\noindent
{\em Claim B4. We have
\begin{align*}
J^{(4)} &= 
\sum
(-1)^q
t_2{k - t_1 - t_2 \choose q - k +t_2 + t_3 + 1}m_{T_1,T_2,T_3,q}\\
&\hspace{5mm}+\sum
(-1)^q
\left[
\delta_{j \in T_2}
{k - t_1 - t_2- 1 \choose q - k + t_2 + t_3 + 1}
-
\delta_{j \in T_1}
{k -t_1 - t_2 - 1 \choose q - k + t_2 + t_3 + 2}
\right]
x_j m_{T_1, T_2, T_3, q}.
\end{align*}
where the first sum is over disjoint \(T_1, T_2, T_3 \subseteq [1,k-1]\) and \(q \in \mathbb{Z}_{\geq 0}\), and the second sum is over disjoint \(T_1, T_2, T_3 \subseteq [1,k-1]\), \(q \in \mathbb{Z}_{\geq 0}\), and \(j \in T_1 \sqcup T_2\).
}

\vspace{2mm}
\noindent
{\em Claim B5. We have
\begin{align*}
J^{(5)} = 
\sum
\left[
(-1)^{q}(t_2+t_3)
{ k - t_1 - t_2 -1 \choose q-k+t_2 + t_3 }
+
(-1)^{q+1}
t_1
{k - t_1 - t_2 -1 \choose q- k + t_2 + t_3 + 1}\right]
m_{T_1, T_2, T_3, q},
\end{align*}
where the sum is over disjoint \(T_1, T_2, T_3 \subseteq [1,k-1]\) and \(q \in \mathbb{Z}_{\geq 0}\).
}

\vspace{2mm}
\noindent
{\em Proof of Claim B0.}
Say by way of contradiction that \([my_t^2] J^{(i)} \neq 0\) for some monomial \(m\), \(i \in [1,5]\). Then there exist \(A,M,B,V,W \subseteq [1,k-1]\) which satisfy (\ref{AMBcrit}), with \(t \in B \cap W\). Then \(t \in A\) and \(t \notin V\). Assuming without loss of generality that \(t \in M\), we have that replacing \(M\) with \(M' = M \backslash \{t\}\) also satisfies (\ref{AMBcrit}). But, from consideration of the definition of \(J^{(i)}\), the coefficients associated to \(A,M,B,V,W\) and \(A,M',B,V,W\) in \(J^{(i)}\) have equal magnitude and opposite sign. Therefore \([my_t^2] J^{(i)} = 0\), giving the contradiction.

\vspace{2mm}
\noindent
{\em Proof of Claim B1.} By Claim B0 and the definition of \(J^{(1)}\), we have that
\[
J^{(1)} = \sum ([y_km_{T_1, T_2, T_3,q}]J^{(1)})y_km_{T_1, T_2, T_3,q},
\] summing over disjoint \(T_1, T_2, T_3 \subseteq [1,k-1]\), and \(q \in \mathbb{Z}_{\geq 0}\).
We also have that
\begin{align}\label{LLL2P}
[y_k m_{T_1, T_2,T_3,q}]J^{(1)} = \sum
(-1)^{k+\ell + 1 + |M| + |V|}
2^{|V| - |V \cap M|}
|A|
{k - |A| - 1 \choose \ell},
\end{align}
where the sum is over \(A,M,B,V,W \subseteq[1,k-1]\), \(\ell \in \mathbb{Z}_{\geq 0}\) which satisfy (\ref{AMBcrit}) and such that
\begin{align}\label{LLL1P}
B\sqcup W = T_2 \sqcup T_3; 
\;\;\;
V = T_1 \sqcup T_2;
\;\;\;
\ell = q - k + |V| + |W| +1.
\end{align}
For such \(A,M,B,V,W\), write:
\begin{align*}
A_1 = A \cap T_1;
\;\;\;
A_3 = A \cap W;
\;\;\;
M_1 = M \cap (T_1 \backslash A_1);
\;\;\;
M_2 = M \cap T_2;
\;\;\;
M_3 = M \cap (W \backslash A_3).
\end{align*}
Since \(M \subseteq V \sqcup W\), we have \(M \subseteq T_1 \sqcup T_2 \sqcup T_3\). Since \(B \subseteq T_2 \sqcup T_3\) and \(A \backslash M \subseteq B\), it follows that \(A \subseteq T_1 \sqcup T_2 \sqcup T_3\) as well.
Since \(V = T_1 \sqcup T_2\) and \(V \cap W = \varnothing\), we must have \(W \subseteq T_3\) and \(B = T_2 \cup (T_3 \backslash W)\). It follows then that \(A \cap T_2 = T_2\), and \(A \cap T_3 = (T_3 \backslash W) \sqcup A_3\). Since \(A \backslash M \subseteq B = T_2 \sqcup (T_3 \backslash W)\), we have \(A_1 \subseteq M \cap T_1\). Since \(M \subseteq V \sqcup W = T_1 \sqcup T_2 \sqcup W\),  we have \(M\cap T_3 \subseteq W\). Since \(A\backslash M \subseteq B = T_2 \sqcup (T_3 \backslash W)\) and \(A_3 \subseteq W\), we have that \(A_3 \subseteq M \cap T_3\). It follows then that
\begin{align}\label{AWMrulesP}
\begin{array}{ll}
V = T_1 \sqcup T_2;
&B = T_2 \sqcup (T_3 \backslash W);\\
A = A_1 \sqcup T_2 \sqcup (T_3 \backslash W) \sqcup A_3;
&M = A_1 \sqcup M_1 \sqcup M_2 \sqcup A_3 \sqcup M_3.
\end{array}
\end{align}
Then it is straightforward to see that ranging over all 
\begin{align}\label{AWMrules2}
W \subseteq T_3;
\;\;\;
A_1 \subseteq T_1;
\;\;\;
A_3 \subseteq W;
\;\;\;
M_1 \subseteq T_1 \backslash A_1;
\;\;\;
M_2 \subseteq T_2;
\;\;\;
M_3 \subseteq W \backslash A_3,
\end{align}
and defining \(A,M,B,V,W\) as in (\ref{AWMrulesP}) gives a complete set of all \(A,M,B,V,W \subseteq [1,k-1]\) which satisfy (\ref{AMBcrit}) and (\ref{LLL1P}). Thus (\ref{LLL2P}) is equal to
\begin{align}\label{LLL7P}
&\sum
(-1)^{ q + w  + a_1 + m_1 + m_2 + a_3 + m_3 } 2^{t_1 + t_2 - a_1 - m_1 - m_2}
{k - a_1 - t_2 - t_3 + w - a_3 -1 \choose  q- k +t_1 + t_2+w +1}\\
&\hspace{10mm}
( t_2 + t_3 + a_1 + a_3 - w)
{t_3 \choose w}{t_1 \choose a_1}{w \choose a_3}{t_1 - a_1 \choose m_1}{t_2 \choose m_2}{w-a_3 \choose m_3}, \nonumber
\end{align}
where the sum ranges over
\begin{align*}
w \in [0,t_3]; 
\;\;\;
a_1 \in [0,t_1];
\;\;\;
a_3 \in [0,w];
\;\;\;
m_1 \in [0,t_1 - a_1];
\;\;\;
m_2 \in [0,t_2];
\;\;\;
m_3 \in [0,w-a_3].
\end{align*}
Taking 
\[
\tau_1 = t_1;
\;\;\;
\tau_2 = t_2;
\;\;\;
\tau_3 = t_3;
\;\;\;
\omega = w;
\;\;\;
\alpha_1 = a_1;
\;\;\;
\alpha_3 = a_3;
\;\;\;
\mu_1 = m_1;
\;\;\;
\mu_2 = m_2;
\;\;\;
\mu_3 = m_3;
\]\[
\kappa = k;
\;\;\;
\phi = q,
\;\;\;
c=1;
\;\;\;
d=1,
\]
it follows from Lemma~\ref{wildLem} that (\ref{LLL7P}) is equal to
\begin{align*}
(-1)^{q}(t_2+t_3)
{ k - t_1 - t_2 -1 \choose q-k+t_2 + t_3 +1 }
+
(-1)^{q+1}
t_1\
{k - t_1 - t_2 -1 \choose q- k + t_2 + t_3 + 2}.
\end{align*}
Thus Claim B1 holds.

With minor tweaks, checking Claims B2--B5 proceeds in a very similar fashion to Claim B1, so we leave this verification to the reader in the main version of this paper. Complete details are available in the arXiv version; see \S\ref{ArxivVersion}.

\begin{answer}
\vspace{2mm}
\noindent
{\em Proof of Claim B2.}
By Claim B0 and the definition of \(J^{(2)}\), we have that
\begin{align}\label{QQQ19}
J^{(2)} = 
\sum
([y_km_{T_1, T_2, T_3, q}]J^{(2)}) y_km_{T_1, T_2, T_3, q} + \sum ([x_jy_km_{T_1, T_2, T_3, q}]J^{(2)}) x_jy_km_{T_1, T_2, T_3, q}
\end{align}
where the first sum is over disjoint \(T_1, T_2, T_3 \subseteq [1,k-1]\), \(q \in \mathbb{Z}_{\geq 0}\),  and the second sum is over disjoint \(T_1, T_2, T_3 \subseteq [1,k-1]\), \(q \in \mathbb{Z}_{\geq 0}\), \(j \in T_1 \sqcup T_2\).

First we consider \([y_k m_{T_1, T_2,T_3,q}]J^{(2)}\). We have that
\begin{align}\label{LLL8Q}
[y_k m_{T_1, T_2,T_3,q}]J^{(2)} = \sum
(-1)^{k+\ell  + |M| + |V|}
2^{|V| - |V \cap M|}
{k - |A| - 2 \choose \ell},
\end{align}
where the sum is over 
\(A,M,B,V,W \subseteq[1,k-1]\), \(j \in A^c\) which satisfy (\ref{AMBcrit}) and such that
\begin{align}\label{LLL9Q}
B\sqcup W = T_2 \sqcup T_3; 
\;\;\;
V \sqcup \{j\} = T_1 \sqcup T_2;
\;\;\;
\ell = q - k + |V| + |W| +1.
\end{align}
Note, if \(j \in T_2\), then since \(j\notin V\), \(j \notin A \supseteq B\), it follows that \(j \in W\). Assuming without loss of generality that \(j \in M\), we have that replacing \(M\) with \(M' = M \backslash \{j\}\) also satisfies (\ref{AMBcrit}). But, from consideration of the definition of \(J^{(2)}\), the contributions to (\ref{LLL8Q}) associated to \(A,M,B,V,W,j\) and \(A,M',B,V,W, j\) in \(J^{(2)}\) have equal magnitude and opposite sign, and thus cancel. Thus we may assume that the sum in (\ref{LLL8Q}) ranges only over \(j \in T_1 \cap A^c\).

For \(A,M,B,V,W,j\) which satisfy (\ref{AMBcrit}) and (\ref{LLL9Q}), write
\begin{align*}
\begin{array}{llll}
A_1 = A \cap T_1; 
&
A_3 = A \cap W;
\\
M_1 = (M \cap T_1) \backslash (A_1\cup\{j\});&
M_2 = M \cap T_2;
&
M_3 = M \cap (W \backslash A_3).
\end{array}
\end{align*}
Since \(V\sqcup \{j\} = T_1 \sqcup T_2\) and \(j \in T_1\), we have that \(T_2 \subseteq V\), and since \(V \cap W = \varnothing\), we must have \(W \subseteq T_3\) and \(B = T_2 \sqcup (T_3 \backslash W)\). It follows then that \(A \cap T_2 = T_2\), and \(A \cap T_3 = (T_3 \backslash W) \sqcup A_3\). Since \(A \backslash M \subseteq B = T_2 \sqcup (T_3 \backslash W)\), we have \(A_1 \subseteq M \cap T_1\). Since \(M \subseteq V \sqcup W = (T_1\backslash \{j\}) \sqcup T_2 \sqcup W\),  we have \(M\cap T_3 \subseteq W\). Since \(A\backslash M \subseteq B = T_2 \sqcup (T_3 \backslash W)\) and \(A_3 \subseteq W\), we have that \(A_3 \subseteq M \cap T_3\). We finally note that since \(f \in T_1\), we have \(j \notin V \sqcup W \supseteq M\), so \(j \notin M\). It follows then that
\begin{align}\label{AWMrulesQ}
\begin{array}{ll}
V = (T_1\backslash \{j\}) \sqcup T_2;
&B = T_2 \sqcup (T_3 \backslash W);\\
A = A_1 \sqcup T_2 \sqcup (T_3 \backslash W) \sqcup A_3;
&M = A_1 \sqcup M_1 \sqcup M_2 \sqcup A_3 \sqcup M_3.
\end{array}
\end{align}
Then it is straightforward to see that ranging over all 
\begin{align}\label{AWMrules2Q}
\begin{array}{llllll}
j \in T_1;
&
W \subseteq T_3;
&
A_1 \subseteq T_1 \backslash \{j\};
&
A_3 \subseteq W;
\\
M_1 \subseteq T_1 \backslash (A_1 \cup \{j\});
&
M_2 \subseteq T_2;
&
M_3 \subseteq W \backslash A_3,
\end{array}
\end{align}
and defining \(A,M,B,V\) as in (\ref{AWMrulesQ}) gives a complete set of all \(A,M,B,V,W \subseteq [1,k-1]\) which satisfy (\ref{AMBcrit}) and (\ref{LLL9Q}). 

Thus (\ref{LLL8Q}) is equal to
\begin{align}\label{LLL13QT}
&\sum
(-1)^{ q  + w  + a_1 + m_1 + m_2 + a_3 + m_3+1 } 2^{t_1 + t_2 - a_1 - m_1 - m_2 -1}
{k - a_1 - t_2 - t_3 + w - a_3 -2 \choose  q- k +t_1 + t_2+w}\\
&\hspace{10mm}\times
{t_1 \choose 1}
{t_3 \choose w}{t_1 -1\choose a_1}{w \choose a_3}{t_1 - a_1-1 \choose m_1}{t_2 \choose m_2}{w-a_3 \choose m_3}, \nonumber
\end{align}
where the sum ranges over
\begin{align*}
w \in [0,t_3]; 
\;\;\;
a_1 \in [0,t_1-1];
\;\;\;
a_3 \in [0,w];
\;\;\;
m_1 \in [0,t_1 - a_1-1];
\;\;\;
m_2 \in [0,t_2];
\;\;\;
m_3 \in [0,w-a_3].
\end{align*}
Taking 
\[
\tau_1 = t_1-1;
\;\;\;
\tau_2 = t_2;
\;\;\;
\tau_3 = t_3;
\;\;\;
\omega = w;
\;\;\;
\alpha_1 = a_1;
\;\;\;
\alpha_3 = a_3;
\;\;\;
\mu_1 = m_1;
\;\;\;
\mu_2 = m_2;
\;\;\;
\mu_3 = m_3;
\]\[
\kappa = k;
\;\;\;
\phi = q,
\;\;\;
c=2;
\;\;\;
d=1,
\]
it follows from Lemma~\ref{wildLem} that (\ref{LLL13QT}) is equal to
\begin{align}\label{QQQ20Q}
(-1)^{q+1} t_1{k - t_1 - t_2 - 1 \choose q - k + t_2 + t_3 + 1}.
\end{align}

Now we consider \([x_jy_k m_{T_1, T_2, T_3, q}]J^{(2)}\), where \(j \in T_1 \sqcup T_2\). We have that
\begin{align}\label{QQQ1Q}
[x_jy_k m_{T_1, T_2, T_3, q}]J^{(2)} 
=
\sum
(-1)^{k+\ell + |M| + |V|}2^{|V| - |V \cap M|}
{k - |A| - 2 \choose \ell},
\end{align}
where the sum is over 
\(A,M,B,V,W \subseteq[1,k-1]\) which satisfy (\ref{AMBcrit}) and such that
\begin{align}\label{QQQ2Q}
j \in A^c; 
\;\;\;
B\sqcup W = T_2 \sqcup T_3; 
\;\;\;
V= T_1 \sqcup T_2;
\;\;\;
\ell = q - k + |V| + |W| +1.
\end{align}
For such \(A,M,B,V,W\), write:
\begin{align*}
A_1 = A \cap T_1;
\;\;\;
A_3 = A \cap W;
\;\;\;
M_1 = (M \cap T_1) \backslash (A_1 \cap \{j\});
\;\;\;
M_2 = M \cap T_2;
\;\;\;
M_3 = M \cap (W \backslash A_3).
\end{align*}
Since \(M \subseteq V \sqcup W\), we have \(M \subseteq T_1 \sqcup T_2 \sqcup T_3\). Since \(B \subseteq T_2 \sqcup T_3\) and \(A \backslash M \subseteq B\), it follows that \(A \subseteq T_1 \sqcup T_2 \sqcup T_3\) as well.
Since \(V = T_1 \sqcup T_2\) and \(V \cap W = \varnothing\), we must have \(W \subseteq T_3\) and \(B = T_2 \cup (T_3 \backslash W)\). It follows then that \(A \cap T_2 = T_2\), and \(A \cap T_3 = (T_3 \backslash W) \sqcup A_3\). Since \(j \in A^c \cap (T_1 \sqcup T_2)\), it follows that \(j \in T_1\). Since \(A \backslash M \subseteq B = T_2 \sqcup (T_3 \backslash W)\), we have \(A_1 \subseteq M \cap T_1\). Since \(M \subseteq V \sqcup W = T_1 \sqcup T_2 \sqcup W\),  we have \(M\cap T_3 \subseteq W\). Since \(A\backslash M \subseteq B = T_2 \sqcup (T_3 \backslash W)\) and \(A_3 \subseteq W\), we have that \(A_3 \subseteq M \cap T_3\). It follows then that
\begin{align}\label{AWMrules5Q}
&V = T_1  \sqcup T_2;
\;\;\;\;B = T_2 \sqcup (T_3 \backslash W);
\;\;\;\;A = A_1 \sqcup T_2 \sqcup (T_3 \backslash W) \sqcup A_3;\\
&M  \in \{ A_1 \sqcup M_1 \sqcup M_2 \sqcup A_3 \sqcup M_3, 
A_1 \sqcup M_1 \sqcup M_2 \sqcup A_3 \sqcup M_3 \sqcup\{j\}\}.
\nonumber
\end{align}
Then it is straightforward to see that ranging over all 
\begin{align*}
W \subseteq T_3;
\;\;
A_1 \subseteq T_1;
\;\;
A_3 \subseteq W;
\;\;
M_1 \subseteq T_1 \backslash (A_1 \cup \{j\});
\;\;
M_2 \subseteq T_2;
\;\;
M_3 \subseteq W \backslash A_3,
\end{align*}
and taking \(A,M,B,V,W\) as in (\ref{AWMrules5Q}) gives a complete set of all \(A,M,B,V,W \subseteq [1,k-1]\) which satisfy (\ref{AMBcrit}) and (\ref{QQQ2Q}). 
Thus (\ref{QQQ1Q}) is equal to
\begin{align}\label{LLL13Q}
&\delta_{j \in T_1}\sum
(-1)^{q  + w+ a_1 + m_1 + m_2 + a_3 +m_3 +1 }
2^{t_1 + t_2 - a_1 - m_1 - m_2 -1}
{k - a_1 - t_2 - t_3 + w - a_3 - 2 \choose q - k + t_1 + t_2 + w+1}\\
&\hspace{10mm}\times
{t_3 \choose w}{t_1 -1\choose a_1}{w \choose a_3}{t_1 - a_1-1 \choose m_1}{t_2 \choose m_2}{w-a_3 \choose m_3}, \nonumber
\end{align}
where the sum ranges over
\begin{align*}
w \in [0,t_3]; 
\;\;\;
a_1 \in [0,t_1-1];
\;\;\;
a_3 \in [0,w];
\;\;\;
m_1 \in [0,t_1 - a_1-1];
\;\;\;
m_2 \in [0,t_2];
\;\;\;
m_3 \in [0,w-a_3].
\end{align*}
Taking 
\[
\tau_1 = t_1-1;
\;\;\;
\tau_2 = t_2;
\;\;\;
\tau_3 = t_3;
\;\;\;
\omega = w;
\;\;\;
\alpha_1 = a_1;
\;\;\;
\alpha_3 = a_3;
\;\;\;
\mu_1 = m_1;
\;\;\;
\mu_2 = m_2;
\;\;\;
\mu_3 = m_3;
\]\[
\kappa = k;
\;\;\;
\phi = q,
\;\;\;
c=2;
\;\;\;
d=2,
\]
it follows from Lemma~\ref{wildLem} that (\ref{LLL13Q}) is equal to
\begin{align}\label{QQQ21Q}
(-1)^{q+1} {k - t_1 - t_2 - 1 \choose q - k + t_2 + t_3 + 2}.
\end{align}
Finally, combining (\ref{QQQ20Q}, \ref{QQQ21Q}) proves Claim B2.

\vspace{2mm}
\noindent
{\em Proof of Claim B3.} By Claim B0 and the definition of \(J^{(3)}\), we have that
\begin{align}\label{YYY19}
J^{(3)} = \sum ([m_{T_1, T_2, T_3,q}]J^{(3)})m_{T_1, T_2, T_3,q}
+
 \sum ([x_jm_{T_1, T_2, T_3,q}]J^{(3)})x_jm_{T_1, T_2, T_3,q},
\end{align}
where the first sum is over disjoint \(T_1, T_2, T_3 \subseteq [1,k-1]\), \(q \in \mathbb{Z}_{\geq 0}\),  and the second sum is over disjoint \(T_1, T_2, T_3 \subseteq [1,k-1]\), \(q \in \mathbb{Z}_{\geq 0}\), \(j \in T_1 \sqcup T_2\).

First we consider \([m_{T_1, T_2,T_3,q}]J^{(3)}\). We have that
\begin{align}\label{YYY8}
[m_{T_1, T_2,T_3,q}]J^{(2)} = \sum
(-1)^{k+\ell  + |M| + |V|+1}
2^{|V| - |V \cap M|}
{k - |A| - 2 \choose \ell},
\end{align}
where the sum is over 
\(A,M,B,V,W \subseteq[1,k-1]\), \(j \in A^c\) which satisfy (\ref{AMBcrit}) and such that
\begin{align}\label{YYY9}
B\sqcup W = T_2 \sqcup T_3; 
\;\;\;
V \sqcup \{j\} = T_1 \sqcup T_2;
\;\;\;
\ell = q - k + |V| + |W| +1.
\end{align}
Note, if \(j \in T_2\), then since \(j\notin V\), \(j \notin A \supseteq B\), it follows that \(j \in W\). Assuming without loss of generality that \(j \in M\), we have that replacing \(M\) with \(M' = M \backslash \{j\}\) also satisfies (\ref{AMBcrit}). But, from consideration of the definition of \(J^{(3)}\), the contributions to (\ref{YYY8}) associated to \(A,M,B,V,W,j\) and \(A,M',B,V,W, j\) in \(J^{(3)}\) have equal magnitude and opposite sign, and thus cancel. Thus we may assume that the sum in (\ref{YYY8}) ranges only over \(j \in T_1 \cap A^c\).

For \(A,M,B,V,W,j\) which satisfy (\ref{AMBcrit}) and (\ref{YYY9}), write
\begin{align*}
\begin{array}{llll}
A_1 = A \cap T_1; 
&
A_3 = A \cap W;
\\
M_1 = (M \cap T_1) \backslash (A_1\cup\{j\});&
M_2 = M \cap T_2;
&
M_3 = M \cap (W \backslash A_3).
\end{array}
\end{align*}
Since \(V\sqcup \{j\} = T_1 \sqcup T_2\) and \(j \in T_1\), we have that \(T_2 \subseteq V\), and since \(V \cap W = \varnothing\), we must have \(W \subseteq T_3\) and \(B = T_2 \sqcup (T_3 \backslash W)\). It follows then that \(A \cap T_2 = T_2\), and \(A \cap T_3 = (T_3 \backslash W) \sqcup A_3\). Since \(A \backslash M \subseteq B = T_2 \sqcup (T_3 \backslash W)\), we have \(A_1 \subseteq M \cap T_1\). Since \(M \subseteq V \sqcup W = (T_1\backslash \{j\}) \sqcup T_2 \sqcup W\),  we have \(M\cap T_3 \subseteq W\). Since \(A\backslash M \subseteq B = T_2 \sqcup (T_3 \backslash W)\) and \(A_3 \subseteq W\), we have that \(A_3 \subseteq M \cap T_3\). We finally note that since \(f \in T_1\), we have \(j \notin V \sqcup W \supseteq M\), so \(j \notin M\). It follows then that
\begin{align}\label{AWMrulesX}
\begin{array}{ll}
V = (T_1\backslash \{j\}) \sqcup T_2;
&B = T_2 \sqcup (T_3 \backslash W);\\
A = A_1 \sqcup T_2 \sqcup (T_3 \backslash W) \sqcup A_3;
&M = A_1 \sqcup M_1 \sqcup M_2 \sqcup A_3 \sqcup M_3.
\end{array}
\end{align}
Then it is straightforward to see that ranging over all 
\begin{align}\label{AWMrules2}
\begin{array}{llllll}
j \in T_1;
&
W \subseteq T_3;
&
A_1 \subseteq T_1 \backslash \{j\};
&
A_3 \subseteq W;
\\
M_1 \subseteq T_1 \backslash (A_1 \cup \{j\});
&
M_2 \subseteq T_2;
&
M_3 \subseteq W \backslash A_3,
\end{array}
\end{align}
and defining \(A,M,B,V\) as in (\ref{AWMrulesX}) gives a complete set of all \(A,M,B,V,W \subseteq [1,k-1]\) which satisfy (\ref{AMBcrit}) and (\ref{YYY9}). 

Thus (\ref{YYY8}) is equal to
\begin{align}\label{YYY13Z}
&\sum
(-1)^{ q  + w  + a_1 + m_1 + m_2 + a_3 + m_3 } 2^{t_1 + t_2 - a_1 - m_1 - m_2 -1}
{k - a_1 - t_2 - t_3 + w - a_3 -2 \choose  q- k +t_1 + t_2+w}\\
&\hspace{10mm}\times
{t_1 \choose 1}
{t_3 \choose w}{t_1 -1\choose a_1}{w \choose a_3}{t_1 - a_1-1 \choose m_1}{t_2 \choose m_2}{w-a_3 \choose m_3}, \nonumber
\end{align}
where the sum ranges over
\begin{align*}
w \in [0,t_3]; 
\;\;\;
a_1 \in [0,t_1-1];
\;\;\;
a_3 \in [0,w];
\;\;\;
m_1 \in [0,t_1 - a_1-1];
\;\;\;
m_2 \in [0,t_2];
\;\;\;
m_3 \in [0,w-a_3].
\end{align*}
Taking 
\[
\tau_1 = t_1-1;
\;\;\;
\tau_2 = t_2;
\;\;\;
\tau_3 = t_3;
\;\;\;
\omega = w;
\;\;\;
\alpha_1 = a_1;
\;\;\;
\alpha_3 = a_3;
\;\;\;
\mu_1 = m_1;
\;\;\;
\mu_2 = m_2;
\;\;\;
\mu_3 = m_3;
\]\[
\kappa = k;
\;\;\;
\phi = q,
\;\;\;
c=2;
\;\;\;
d=1,
\]
it follows from Lemma~\ref{wildLem} that (\ref{YYY13Z}) is equal to
\begin{align}\label{YYY20}
(-1)^{q} t_1{k - t_1 - t_2 - 1 \choose q - k + t_2 + t_3 + 1}.
\end{align}

Now we consider \([x_j m_{T_1, T_2, T_3, q}]J^{(3)}\), where \(j \in T_1 \sqcup T_2\). We have that
\begin{align}\label{YYY1}
[x_j m_{T_1, T_2, T_3, q}]J^{(3)} 
=
\sum
(-1)^{k+\ell + |M| + |V|+1}2^{|V| - |V \cap M|}
{k - |A| - 2 \choose \ell},
\end{align}
where the sum is over 
\(A,M,B,V,W \subseteq[1,k-1]\) which satisfy (\ref{AMBcrit}) and such that
\begin{align}\label{YYY2}
j \in A^c; 
\;\;\;
B\sqcup W = T_2 \sqcup T_3; 
\;\;\;
V= T_1 \sqcup T_2;
\;\;\;
\ell = q - k + |V| + |W| +1.
\end{align}
For such \(A,M,B,V,W\), write:
\begin{align*}
A_1 = A \cap T_1;
\;\;\;
A_3 = A \cap W;
\;\;\;
M_1 = (M \cap T_1) \backslash (A_1 \cap \{j\});
\;\;\;
M_2 = M \cap T_2;
\;\;\;
M_3 = M \cap (W \backslash A_3).
\end{align*}
Since \(M \subseteq V \sqcup W\), we have \(M \subseteq T_1 \sqcup T_2 \sqcup T_3\). Since \(B \subseteq T_2 \sqcup T_3\) and \(A \backslash M \subseteq B\), it follows that \(A \subseteq T_1 \sqcup T_2 \sqcup T_3\) as well.
Since \(V = T_1 \sqcup T_2\) and \(V \cap W = \varnothing\), we must have \(W \subseteq T_3\) and \(B = T_2 \cup (T_3 \backslash W)\). It follows then that \(A \cap T_2 = T_2\), and \(A \cap T_3 = (T_3 \backslash W) \sqcup A_3\). Since \(j \in A^c \cap (T_1 \sqcup T_2)\), it follows that \(j \in T_1\). Since \(A \backslash M \subseteq B = T_2 \sqcup (T_3 \backslash W)\), we have \(A_1 \subseteq M \cap T_1\). Since \(M \subseteq V \sqcup W = T_1 \sqcup T_2 \sqcup W\),  we have \(M\cap T_3 \subseteq W\). Since \(A\backslash M \subseteq B = T_2 \sqcup (T_3 \backslash W)\) and \(A_3 \subseteq W\), we have that \(A_3 \subseteq M \cap T_3\). It follows then that
\begin{align}\label{AWMrules5X}
&V = T_1  \sqcup T_2;
\;\;\;\;B = T_2 \sqcup (T_3 \backslash W);
\;\;\;\;A = A_1 \sqcup T_2 \sqcup (T_3 \backslash W) \sqcup A_3;\\
&M  \in \{ A_1 \sqcup M_1 \sqcup M_2 \sqcup A_3 \sqcup M_3, 
A_1 \sqcup M_1 \sqcup M_2 \sqcup A_3 \sqcup M_3 \sqcup\{j\}\}.
\nonumber
\end{align}
Then it is straightforward to see that ranging over all 
\begin{align*}
W \subseteq T_3;
\;\;
A_1 \subseteq T_1;
\;\;
A_3 \subseteq W;
\;\;
M_1 \subseteq T_1 \backslash (A_1 \cup \{j\});
\;\;
M_2 \subseteq T_2;
\;\;
M_3 \subseteq W \backslash A_3,
\end{align*}
and taking \(A,M,B,V,W\) as in (\ref{AWMrules5X}) gives a complete set of all \(A,M,B,V,W \subseteq [1,k-1]\) which satisfy (\ref{AMBcrit}) and (\ref{YYY2}). 
Thus (\ref{YYY1}) is equal to
\begin{align}\label{YYY13}
&\delta_{j \in T_1}\sum
(-1)^{q  + w+ a_1 + m_1 + m_2 + a_3 +m_3 }
2^{t_1 + t_2 - a_1 - m_1 - m_2 -1}
{k - a_1 - t_2 - t_3 + w - a_3 - 2 \choose q - k + t_1 + t_2 + w+1}\\
&\hspace{10mm}\times
{t_3 \choose w}{t_1 -1\choose a_1}{w \choose a_3}{t_1 - a_1-1 \choose m_1}{t_2 \choose m_2}{w-a_3 \choose m_3}, \nonumber
\end{align}
where the sum ranges over
\begin{align*}
w \in [0,t_3]; 
\;\;\;
a_1 \in [0,t_1-1];
\;\;\;
a_3 \in [0,w];
\;\;\;
m_1 \in [0,t_1 - a_1-1];
\;\;\;
m_2 \in [0,t_2];
\;\;\;
m_3 \in [0,w-a_3].
\end{align*}
Taking 
\[
\tau_1 = t_1-1;
\;\;\;
\tau_2 = t_2;
\;\;\;
\tau_3 = t_3;
\;\;\;
\omega = w;
\;\;\;
\alpha_1 = a_1;
\;\;\;
\alpha_3 = a_3;
\;\;\;
\mu_1 = m_1;
\;\;\;
\mu_2 = m_2;
\;\;\;
\mu_3 = m_3;
\]\[
\kappa = k;
\;\;\;
\phi = q,
\;\;\;
c=2;
\;\;\;
d=2,
\]
it follows from Lemma~\ref{wildLem} that (\ref{YYY13}) is equal to
\begin{align}\label{YYY21}
(-1)^{q} {k - t_1 - t_2 - 1 \choose q - k + t_2 + t_3 + 2}.
\end{align}
Finally, combining (\ref{YYY19}, \ref{YYY20}, \ref{YYY21}) proves Claim B3.

\vspace{2mm}
\noindent
{\em Proof of Claim B4.}
By Claim B0 and the definition of \(J^{(4)}\), we have that
\begin{align}\label{QQQ19}
J^{(4)} = 
\sum
([m_{T_1, T_2, T_3, q}]J^{(4)}) m_{T_1, T_2, T_3, q} + \sum ([x_jm_{T_1, T_2, T_3, q}]J^{(4)}) x_jm_{T_1, T_2, T_3, q}
\end{align}
where the first sum is over disjoint \(T_1, T_2, T_3 \subseteq [1,k-1]\), \(q \in \mathbb{Z}_{\geq 0}\),  and the second sum is over disjoint \(T_1, T_2, T_3 \subseteq [1,k-1]\), \(q \in \mathbb{Z}_{\geq 0}\), \(j \in T_1 \sqcup T_2\).

First we consider \([ m_{T_1, T_2,T_3,q}]J^{(4)}\). We have that
\begin{align}\label{UUU8}
[m_{T_1, T_2,T_3,q}]J^{(4)} = \sum
(-1)^{k+\ell  +1+ |M| + |V|}
2^{|V| - |V \cap M|}
{k - |A| - 1 \choose \ell},
\end{align}
where the sum is over 
\(A,M,B,V,W \subseteq[1,k-1]\), \(j \in A\) which satisfy (\ref{AMBcrit}) and such that
\begin{align}\label{UUU9}
B\sqcup W = T_2 \sqcup T_3; 
\;\;\;
V \sqcup \{j\} = T_1 \sqcup T_2;
\;\;\;
\ell = q - k + |V| + |W| +1.
\end{align}
Note, if \(j \in T_1\), then \(j\notin V \cup W \supseteq M\), so \(j \notin M\). Then \(j \in A \backslash M \subseteq B \subseteq T_2 \sqcup T_3\), a contradiction since \(T_1, T_2, T_3\) are disjoint. Thus we may assume that the sum in (\ref{UUU8}) ranges only over \(j \in T_2 \cap A\).

For \(A,M,B,V,W,j\) which satisfy (\ref{AMBcrit}) and (\ref{UUU9}), write
\begin{align*}
\begin{array}{llll}
A_1 = A \cap T_1; 
&
A_3 = A \cap T_3 \cap W;
&
W_3 = W \cap T_3;
\\
M_1 = T_1 \backslash A_1;&
M_2 = M \cap (T_2 \backslash \{j\})
&
M_3 = M \cap (T_3 \backslash A_3).
\end{array}
\end{align*}

(i) First assume that \(j \in W\). Then, since \(V\sqcup \{j\} = T_1 \sqcup T_2\) and \(j \in T_2\), we have that \(T_1 \subseteq V\), and since \(V \cap W = \varnothing\), we have \(W = W_3 \sqcup \{j\}\), and \(B = (T_2 \backslash \{j\}) \sqcup (T_3 \backslash W_3)\). Since \(j \in A\) and \(j \notin B \supseteq A \backslash M\), it follows that \(j \in M\). Thus we have that
\begin{align}\label{AWMrulesjinW}
\begin{array}{ll}
V = T_1 \sqcup (T_2 \backslash \{j\});
&B = (T_2\backslash \{j\}) \sqcup (T_3 \backslash W_3);\\
A = A_1 \sqcup T_2 \sqcup (T_3 \backslash W_3) \sqcup A_3;
&M = A_1 \sqcup M_1 \sqcup M_2 \sqcup\{ j\} \sqcup A_3 \sqcup M_3.
\end{array}
\end{align}
Then it is straightforward to see that ranging over all 
\begin{align}\label{AWMrulesjinW2}
\begin{array}{llllll}
j \in T_2;
&
W_3 \subseteq T_3;
&
A_1 \subseteq T_1;
&
A_3 \subseteq W_3;
\\
M_1 \subseteq T_1 \backslash A_1;
&
M_2 \subseteq T_2 \backslash \{j\};
&
M_3 \subseteq W_3 \backslash A_3,
\end{array}
\end{align}
and defining \(A,M,B,V\) as in (\ref{AWMrulesjinW}) and \(W = W_3 \sqcup \{j\}\) gives a complete set of all \(A,M,B,V,W \subseteq [1,k-1]\) which satisfy (\ref{AMBcrit}) and (\ref{UUU9}). 

(ii) On the other hand assume that \(j \notin W\). Then we have that \(j \in B\), and since \(j \notin V \sqcup W \supseteq M\), we have that \(j \notin M\). Since \(V \cap W = \varnothing\), we then have that \(W = W_3\). Then it follows that
\begin{align}\label{AWMrulesjnotinW}
\begin{array}{ll}
V = T_1 \sqcup (T_2 \backslash \{j\});
&B = T_2 \sqcup (T_3 \backslash W_3);\\
A = A_1 \sqcup T_2 \sqcup (T_3 \backslash W_3) \sqcup A_3;
&M = A_1 \sqcup M_1 \sqcup M_2 \sqcup A_3 \sqcup M_3.
\end{array}
\end{align}
Then it is straightforward to see that ranging over (\ref{AWMrulesjinW2})
and defining \(A,M,B,V\) as in (\ref{AWMrulesjnotinW}) and \(W = W_3\) gives a complete set of all \(A,M,B,V,W \subseteq [1,k-1]\) which satisfy (\ref{AMBcrit}) and (\ref{UUU9}). 

Thus (\ref{UUU8})
 is equal to
\begin{align}\label{UUU13}
&\sum
(-1)^{  q + w_3 + a_1 +m_1 + m_2 + a_3 + m_3   }2^{t_1 + t_2 -1 -  a_1 +m_1 + m_2  }\nonumber\\
&\hspace{10mm}\times
\left[
{k - a_1 - t_2 - t_3 +w_3 + a_3 - 1 \choose 
q- k + t_1 + t_2  + w_3 + 1 }
+
{k - a_1 - t_2 - t_3 +w_3 + a_3 - 1 \choose 
q- k + t_1 + t_2  + w_3 }
\right]\nonumber
\\
&\hspace{20mm}\times
{t_2 \choose 1}
{t_3 \choose w_3}{t_1\choose a_1}{w_3 \choose a_3}{t_1 - a_1 \choose m_1}{t_2-1 \choose m_2}{w_3-a_3 \choose m_3}, \nonumber\\
&=\sum
(-1)^{  q + w_3 + a_1 +m_1 + m_2 + a_3 + m_3   }2^{t_1 + t_2 -1 -  a_1 +m_1 + m_2  }
{k - a_1 - t_2 - t_3 +w_3 + a_3  \choose 
q- k + t_1 + t_2  + w_3 + 1 }
\nonumber
\\
&\hspace{10mm}\times
{t_2 \choose 1}
{t_3 \choose w_3}{t_1\choose a_1}{w_3 \choose a_3}{t_1 - a_1 \choose m_1}{t_2-1 \choose m_2}{w_3-a_3 \choose m_3}, 
\end{align}
where the sum ranges over
\begin{align*}
w_3 \in [0,t_3]; 
\;\;\;
a_1 \in [0,t_1];
\;\;\;
a_3 \in [0,w];
\;\;\;
m_1 \in [0,t_1 - a_1];
\;\;\;
m_2 \in [0,t_2-1];
\;\;\;
m_3 \in [0,w_3-a_3].
\end{align*}
Taking 
\[
\tau_1 = t_1;
\;\;\;
\tau_2 = t_2-1;
\;\;\;
\tau_3 = t_3;
\;\;\;
\omega = w_3;
\;\;\;
\alpha_1 = a_1;
\;\;\;
\alpha_3 = a_3;
\;\;\;
\mu_1 = m_1;
\;\;\;
\mu_2 = m_2;
\;\;\;
\mu_3 = m_3;
\]\[
\kappa = k;
\;\;\;
\phi = q,
\;\;\;
c=1;
\;\;\;
d=2,
\]
it follows from Lemma~\ref{wildLem} that (\ref{UUU13}) is equal to
\begin{align}\label{QQQ20}
(-1)^{q} t_2{k - t_1 - t_2  \choose q - k + t_2 + t_3 + 1}.
\end{align}

Now we consider \([x_j m_{T_1, T_2, T_3, q}]J^{(4)}\), where \(j \in T_1 \sqcup T_2\). We have that
\begin{align}\label{VVV1}
[x_j m_{T_1, T_2, T_3, q}]J^{(4)} 
=
\sum
(-1)^{k+\ell + 1+|M| + |V|}2^{|V| - |V \cap M|}
{k - |A| - 1 \choose \ell},
\end{align}
where the sum is over 
\(A,M,B,V,W \subseteq[1,k-1]\) which satisfy (\ref{AMBcrit}) and such that
\begin{align}\label{VVV2}
j \in A; 
\;\;\;
B\sqcup W = T_2 \sqcup T_3; 
\;\;\;
V= T_1 \sqcup T_2;
\;\;\;
\ell = q - k + |V| + |W| +1.
\end{align}

(i) First assume that \(j \in T_1\). Then since \(j \in A \backslash B\) and \(A \backslash M \subseteq B\), it follows that \(j \in M\). For \(A,M,B,V,W\) which satisfy (\ref{AMBcrit}, \ref{VVV2}), write:
\begin{align*}
\begin{array}{lll}
A_1 = A \cap (T_1 \cup \backslash\{j\});
&
A_3 = A \cap W;\\
M_1 = (M \cap T_1) \backslash (A_1 \cup \{j\});
&
M_2 = M \cap T_2;
&
M_3 = M \cap (W \backslash A_3).
\end{array}
\end{align*}
Since \(M \subseteq V \sqcup W\), we have \(M \subseteq T_1 \sqcup T_2 \sqcup T_3\). Since \(B \subseteq T_2 \sqcup T_3\) and \(A \backslash M \subseteq B\), it follows that \(A \subseteq T_1 \sqcup T_2 \sqcup T_3\) as well.
 It follows then that
\begin{align}\label{AWMrulesV}
&V = T_1  \sqcup T_2;
\;\;\;\;B = T_2 \sqcup (T_3 \backslash W);
\;\;\;\;A = A_1 \sqcup \{j\} \sqcup T_2 \sqcup (T_3 \backslash W) \sqcup A_3;\\
&M= A_1 \sqcup \{j\} \sqcup M_1 \sqcup M_2 \sqcup A_3 \sqcup M_3.
\nonumber
\end{align}
Then it is straightforward to see that ranging over all 
\begin{align*}
W \subseteq T_3;
\;\;
A_1 \subseteq T_1\backslash \{j\};
\;\;
A_3 \subseteq W;
\;\;
M_1 \subseteq T_1 \backslash (A_1 \cup \{j\});
\;\;
M_2 \subseteq T_2;
\;\;
M_3 \subseteq W \backslash A_3,
\end{align*}
and taking \(A,M,B,V,W\) as in (\ref{AWMrulesV}) gives a complete set of all \(A,M,B,V,W \subseteq [1,k-1]\) which satisfy (\ref{AMBcrit}) and (\ref{VVV2}). 
Thus when \(j \in T_1\), (\ref{VVV1}) is equal to
\begin{align}\label{VVV13}
&\sum
(-1)^{
 q+w  +a_1 + 1 + m_1 + m_2 + a_3 + m_3
 }
2^{
t_1 + t_2 - a_1 - 1 - m_1 - m_2
}
{k - a_1 -t_2 - t_3 + w - a_3 -2 \choose q-k+t_1 + t_2+w+1}\\
&\hspace{10mm}\times
{t_3 \choose w}{t_1 -1\choose a_1}{w \choose a_3}{t_1 - a_1-1 \choose m_1}{t_2 \choose m_2}{w-a_3 \choose m_3}, \nonumber
\end{align}
where the sum ranges over
\begin{align*}
w \in [0,t_3]; 
\;\;\;
a_1 \in [0,t_1-1];
\;\;\;
a_3 \in [0,w];
\;\;\;
m_1 \in [0,t_1 - a_1-1];
\;\;\;
m_2 \in [0,t_2];
\;\;\;
m_3 \in [0,w-a_3].
\end{align*}
Taking 
\[
\tau_1 = t_1-1;
\;\;\;
\tau_2 = t_2;
\;\;\;
\tau_3 = t_3;
\;\;\;
\omega = w;
\;\;\;
\alpha_1 = a_1;
\;\;\;
\alpha_3 = a_3;
\;\;\;
\mu_1 = m_1;
\;\;\;
\mu_2 = m_2;
\;\;\;
\mu_3 = m_3;
\]\[
\kappa = k;
\;\;\;
\phi = q,
\;\;\;
c=2;
\;\;\;
d=2,
\]
it follows from Lemma~\ref{wildLem} that (\ref{VVV13}) is equal to
\begin{align}\label{VVV21}
(-1)^{q+1} {k - t_1 - t_2 - 1 \choose q - k + t_2 + t_3 + 2}.
\end{align}

(ii) Now assume that \(j \in T_2\). We note then that \(j \in V \cap B \cap A\), so we may have \(j \in M\) or \(j \notin M\).
For \(A,M,B,V,W\) which satisfy (\ref{AMBcrit}, \ref{VVV2}), write:
\begin{align*}
\begin{array}{lll}
A_1 = A \cap T_1;
&
A_3 = A \cap W;\\
M_1 = (M \cap T_1) \backslash A_1;
&
M_2 = M \cap T_2;
&
M_3 = M \cap (W \backslash A_3).
\end{array}
\end{align*}
Since \(M \subseteq V \sqcup W\), we have \(M \subseteq T_1 \sqcup T_2 \sqcup T_3\). Since \(B \subseteq T_2 \sqcup T_3\) and \(A \backslash M \subseteq B\), it follows that \(A \subseteq T_1 \sqcup T_2 \sqcup T_3\) as well.
 It follows then that
\begin{align}\label{AWMrulesVI}
&V = T_1  \sqcup T_2;
\;\;\;\;B = T_2 \sqcup (T_3 \backslash W);
\;\;\;\;A = A_1 \sqcup T_2 \sqcup (T_3 \backslash W) \sqcup A_3;\\
&M= A_1 \sqcup M_1 \sqcup M_2 \sqcup A_3 \sqcup M_3.
\nonumber
\end{align}
Then it is straightforward to see that ranging over all 
\begin{align*}
W \subseteq T_3;
\;\;
A_1 \subseteq T_1;
\;\;
A_3 \subseteq W;
\;\;
M_1 \subseteq T_1 \backslash A_1 ;
\;\;
M_2 \subseteq T_2;
\;\;
M_3 \subseteq W \backslash A_3,
\end{align*}
and taking \(A,M,B,V,W\) as in (\ref{AWMrulesVI}) gives a complete set of all \(A,M,B,V,W \subseteq [1,k-1]\) which satisfy (\ref{AMBcrit}) and (\ref{VVV2}). 
Thus when \(j \in T_2\), (\ref{VVV1}) is equal to
\begin{align}\label{WWW13}
&\sum
(-1)^{
q+w + a_1 + m_1 + m_2 + a_3 + m_3 
 }
2^{
t_1 + t_2 - a_1 - m_1 - m_2 
}
{k - a_1 - t_2 - t_3 + w - a_3 -1\choose q - k + t_1 + t_2 + w + 1}\\
&\hspace{10mm}\times
{t_3 \choose w}{t_1 \choose a_1}{w \choose a_3}{t_1 - a_1 \choose m_1}{t_2 \choose m_2}{w-a_3 \choose m_3}, \nonumber
\end{align}
where the sum ranges over
\begin{align*}
w \in [0,t_3]; 
\;\;\;
a_1 \in [0,t_1];
\;\;\;
a_3 \in [0,w];
\;\;\;
m_1 \in [0,t_1 - a_1];
\;\;\;
m_2 \in [0,t_2];
\;\;\;
m_3 \in [0,w-a_3].
\end{align*}
Taking 
\[
\tau_1 = t_1;
\;\;\;
\tau_2 = t_2;
\;\;\;
\tau_3 = t_3;
\;\;\;
\omega = w;
\;\;\;
\alpha_1 = a_1;
\;\;\;
\alpha_3 = a_3;
\;\;\;
\mu_1 = m_1;
\;\;\;
\mu_2 = m_2;
\;\;\;
\mu_3 = m_3;
\]\[
\kappa = k;
\;\;\;
\phi = q,
\;\;\;
c=1;
\;\;\;
d=1,
\]
it follows from Lemma~\ref{wildLem} that (\ref{WWW13}) is equal to
\begin{align}\label{WWW21}
(-1)^{q} {k - t_1 - t_2 - 1 \choose q - k + t_2 + t_3 + 1}.
\end{align}
Finally, combining (\ref{QQQ19}, \ref{QQQ20}, \ref{VVV21}, \ref{WWW21}) completes the proof of Claim B4.

\vspace{2mm}
\noindent
{\em Proof of Claim B5.} By Claim B0 and the definition of \(J^{(5)}\), we have that
\[
J^{(5)} = \sum ([y_km_{T_1, T_2, T_3,q}]J^{(1)})m_{T_1, T_2, T_3,q},
\] summing over disjoint \(T_1, T_2, T_3 \subseteq [1,k-1]\), and \(q \in \mathbb{Z}_{\geq 0}\).
We also have that
\begin{align}\label{HHH2}
[ m_{T_1, T_2,T_3,q}]J^{(1)} = \sum
(-1)^{k+\ell + 1 + |M| + |V|}
2^{|V| - |V \cap M|}
|A|
{k - |A| - 1 \choose \ell-1},
\end{align}
where the sum is over \(A,M,B,V,W \subseteq[1,k-1]\), \(\ell \in \mathbb{Z}_{\geq 0}\) which satisfy (\ref{AMBcrit}) and such that
\begin{align}\label{HHH1}
B\sqcup W = T_2 \sqcup T_3; 
\;\;\;
V = T_1 \sqcup T_2;
\;\;\;
\ell = q - k + |V| + |W| +1.
\end{align}
For such \(A,M,B,V,W\), write:
\begin{align*}
A_1 = A \cap T_1;
\;\;\;
A_3 = A \cap W;
\;\;\;
M_1 = M \cap (T_1 \backslash A_1);
\;\;\;
M_2 = M \cap T_2;
\;\;\;
M_3 = M \cap (W \backslash A_3).
\end{align*}
Since \(M \subseteq V \sqcup W\), we have \(M \subseteq T_1 \sqcup T_2 \sqcup T_3\). Since \(B \subseteq T_2 \sqcup T_3\) and \(A \backslash M \subseteq B\), it follows that \(A \subseteq T_1 \sqcup T_2 \sqcup T_3\) as well.
Since \(V = T_1 \sqcup T_2\) and \(V \cap W = \varnothing\), we must have \(W \subseteq T_3\) and \(B = T_2 \cup (T_3 \backslash W)\). It follows then that \(A \cap T_2 = T_2\), and \(A \cap T_3 = (T_3 \backslash W) \sqcup A_3\). Since \(A \backslash M \subseteq B = T_2 \sqcup (T_3 \backslash W)\), we have \(A_1 \subseteq M \cap T_1\). Since \(M \subseteq V \sqcup W = T_1 \sqcup T_2 \sqcup W\),  we have \(M\cap T_3 \subseteq W\). Since \(A\backslash M \subseteq B = T_2 \sqcup (T_3 \backslash W)\) and \(A_3 \subseteq W\), we have that \(A_3 \subseteq M \cap T_3\). It follows then that
\begin{align}\label{AWMrules30}
\begin{array}{ll}
V = T_1 \sqcup T_2;
&B = T_2 \sqcup (T_3 \backslash W);\\
A = A_1 \sqcup T_2 \sqcup (T_3 \backslash W) \sqcup A_3;
&M = A_1 \sqcup M_1 \sqcup M_2 \sqcup A_3 \sqcup M_3.
\end{array}
\end{align}
Then it is straightforward to see that ranging over all 
\begin{align}\label{AWMrules31}
W \subseteq T_3;
\;\;\;
A_1 \subseteq T_1;
\;\;\;
A_3 \subseteq W;
\;\;\;
M_1 \subseteq T_1 \backslash A_1;
\;\;\;
M_2 \subseteq T_2;
\;\;\;
M_3 \subseteq W \backslash A_3,
\end{align}
and defining \(A,M,B,V,W\) as in (\ref{AWMrules30}) gives a complete set of all \(A,M,B,V,W \subseteq [1,k-1]\) which satisfy (\ref{AMBcrit}) and (\ref{HHH1}). Thus (\ref{HHH2}) is equal to
\begin{align}\label{HHH3}
&\sum
(-1)^{ q + w  + a_1 + m_1 + m_2 + a_3 + m_3 } 2^{t_1 + t_2 - a_1 - m_1 - m_2}
{k - a_1 - t_2 - t_3 + w - a_3 -1 \choose  q- k +t_1 + t_2+w }\\
&\hspace{10mm}
( t_2 + t_3 + a_1 + a_3 - w)
{t_3 \choose w}{t_1 \choose a_1}{w \choose a_3}{t_1 - a_1 \choose m_1}{t_2 \choose m_2}{w-a_3 \choose m_3}, \nonumber
\end{align}
where the sum ranges over
\begin{align*}
w \in [0,t_3]; 
\;\;\;
a_1 \in [0,t_1];
\;\;\;
a_3 \in [0,w];
\;\;\;
m_1 \in [0,t_1 - a_1];
\;\;\;
m_2 \in [0,t_2];
\;\;\;
m_3 \in [0,w-a_3].
\end{align*}
Taking 
\[
\tau_1 = t_1;
\;\;\;
\tau_2 = t_2;
\;\;\;
\tau_3 = t_3;
\;\;\;
\omega = w;
\;\;\;
\alpha_1 = a_1;
\;\;\;
\alpha_3 = a_3;
\;\;\;
\mu_1 = m_1;
\;\;\;
\mu_2 = m_2;
\;\;\;
\mu_3 = m_3;
\]\[
\kappa = k;
\;\;\;
\phi = q,
\;\;\;
c=1;
\;\;\;
d=0,
\]
it follows from Lemma~\ref{wildLem} that (\ref{HHH3}) is equal to
\begin{align*}
(-1)^{q}(t_2+t_3)
{ k - t_1 - t_2 -1 \choose q-k+t_2 + t_3  }
+
(-1)^{q+1}
t_1
{k - t_1 - t_2 -1 \choose q- k + t_2 + t_3 + 1},
\end{align*}
which completes the proof of Claim B5.
\end{answer}

With Claims B1--B5 verified, it is straightforward to check that \(\mathscr{R} = J^{(1)} + J^{(2)} = J^{(3)} = J^{(4)} = J^{(5)}\) is equal to the expression in the lemma statement, completing the proof.
\end{proof}

Finally, Lemma~\ref{subz}, \ref{LRequal} together imply
\begin{Corollary}\label{ProLiv}
Lemma~\ref{ThePolyLem}(iv) holds. 
\end{Corollary}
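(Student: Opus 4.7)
The plan is to lift the claimed identity from the ring $R_k$ to the larger polynomial ring $R_k' = \mathbb{Z}[x_1,\ldots,x_{k-1},z,y_1,\ldots,y_k]$, in which the symbol $z$ plays the role of the total sum $x_{([1,k],+)}$. Since $P_k(x_1,\ldots,x_k,y_1,\ldots,y_k)$ is a homogeneous polynomial of degree $k$ and the factor of degree $k-2$ at the front of each of its summands is a pure power of $x_{([1,k],+)}$, I would first rewrite $P_k$ as $Q(x_1,\ldots,x_{k-1},x_{([1,k],+)},y_1,\ldots,y_k)$ where $Q$ is a new polynomial in $R_k'$ (this is the polynomial called $Q$ in \eqref{ThePoly3}). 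Then define $\mathscr{L}:=Q\cdot\prod_{i=1}^{k-1}(z-x_i-1)$ and an analogous $\mathscr{R}$ built from the shifts $Q^{(M)}:=Q(x_1,\ldots,x_{k-1},z-1,y_1+\delta_{1\in M},\ldots,y_{k-1}+\delta_{k-1\in M},y_k-1)$ paired with the correct $\prod_{s\in M}(x_s-y_s)\prod_{s\notin M}(z-2x_s+y_s)$ factors. A routine check will show that setting $z=x_{([1,k],+)}$ recovers the two sides of \eqref{TheEqn}, so it suffices to verify $\mathscr{L}=\mathscr{R}$ as elements of $R_k'$.

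To compare $\mathscr{L}$ and $\mathscr{R}$ I would parametrize monomials of $R_k'$ in a form that separates their $x$- and $y$-content. Writing $m_{T_1,T_2,T_3,q}:=x_{(T_1\sqcup T_2,\times)}y_{(T_2\sqcup T_3,\times)}z^q$ for pairwise disjoint $T_1,T_2,T_3\subseteq[1,k-1]$ (and observing that any monomial with a repeated $y_t^2$ factor must cancel for parity/sign reasons in $\mathscr{R}$), one notes that the only monomials with nonzero coefficient on either side are of the forms $m$, $x_jm$, $y_km$ and $x_jy_km$ (with an appropriate index $j$). The proof then reduces to four coefficient-extraction claims on the $\mathscr{L}$ side and five on the $\mathscr{R}$ side (one per summand $J^{(1)},\ldots,J^{(5)}$ of $\mathscr{R}$). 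The $\mathscr{L}$-side extractions are straightforward applications of the binomial theorem to the product $\prod(z-x_i-1)$.

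The main obstacle is the $\mathscr{R}$-side bookkeeping. For each target monomial one must run through the choices of the four auxiliary sets $A,M,B,V,W\subseteq[1,k-1]$ satisfying $A\setminus M\subseteq B\subseteq A$, $V\cap W=\varnothing$, $M\subseteq V\cup W$, together with the compatibility conditions that force the monomial $x_{(V,\times)}y_{(B,\times)}$ to match $m_{T_1,T_2,T_3,q}$ (times possibly $y_k$, $x_j$, or $x_jy_k$). A careful case analysis shows that after these constraints each of $A,M,B,V,W$ decomposes canonically as a disjoint union of pieces indexed by small subsets $A_1\subseteq T_1$, $A_3\subseteq W$ (and similarly $M_1,M_2,M_3$, $W\subseteq T_3$), and the resulting sum fits exactly the hypothesis of the technical Lemma~\ref{wildLem} for appropriate choices of $\tau_i$, $\alpha_i$, $\eta_i$, $\kappa$, $\phi$, $c$, $d$, $\varepsilon$.

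Thus the strategy collapses to: (i) extract each monomial's coefficient from $\mathscr{L}$ via a direct expansion of $Q\cdot\prod(z-x_i-1)$, (ii) extract the same coefficient from each of $J^{(1)},\ldots,J^{(5)}$ using the decomposition above, (iii) apply Lemma~\ref{wildLem} with the tailored parameters to evaluate the resulting alternating multi-sum, and (iv) combine the five contributions to $\mathscr{R}$ and observe they agree, term-for-term, with the four contributions to $\mathscr{L}$. The combinatorial identity Lemma~\ref{wildLem} itself is the workhorse, and its proof amounts to iterated applications of the Chu--Vandermonde identity and the two variants of alternating binomial sums collected in Lemma~\ref{combid}. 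Once these coefficient identifications are done, Corollary~\ref{ProLiv} follows by combining Lemma~\ref{subz} (the specialization $z\mapsto x_{([1,k],+)}$) with Lemma~\ref{LRequal} (the equality $\mathscr{L}=\mathscr{R}$ in $R_k'$).
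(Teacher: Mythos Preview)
Your proposal is correct and follows essentially the same approach as the paper: lift to $R_k'$ via $Q$, define $\mathscr{L}$ and $\mathscr{R}=\sum J^{(i)}$, verify via Lemma~\ref{subz} that the specialization $z\mapsto x_{([1,k],+)}$ recovers the two sides of \eqref{TheEqn}, and then prove $\mathscr{L}=\mathscr{R}$ in Lemma~\ref{LRequal} by extracting the coefficients of the monomials $m_{T_1,T_2,T_3,q}$, $x_jm$, $y_km$, $x_jy_km$ on each side, with Lemma~\ref{wildLem} doing the heavy lifting on the $\mathscr{R}$ side. The only slip is that you write ``four auxiliary sets $A,M,B,V,W$'' when there are five, but otherwise your outline matches the paper's argument in structure and detail.
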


\end{document}